\numberwithin{equation}{section}
\newcommand{\R}{{\mathbb R}}
\newcommand{\N}{{\mathbb N}}
\renewcommand{\ge }{\geqslant}
\renewcommand{\geq }{\geqslant}
\renewcommand{\leq }{\leqslant}
\def\neweq#1{\begin{equation}\label{#1}}
\def\endeq{\end{equation}}
\def\eq#1{(\ref{#1})}
\newtheorem{theorem}{Theorem}[section]
\newtheorem{proposition}[theorem]{Proposition}
\newtheorem{lemma}[theorem]{Lemma}
\newtheorem{remark}[theorem]{Remark}
\theoremstyle{definition}
\begin{document}

\title[Non-homogeneous partially hinged plates]{Maximizing the ratio of eigenvalues of\\ non-homogeneous partially hinged plates}

\author[Elvise BERCHIO]{Elvise BERCHIO}
\address{\hbox{\parbox{5.7in}{\medskip\noindent{Dipartimento di Scienze Matematiche, \\
Politecnico di Torino,\\ Corso Duca degli Abruzzi 24, 10129 Torino, Italy. \\[3pt]
\em{E-mail address: }{\tt elvise.berchio@polito.it}}}}}
\author[Alessio FALOCCHI]{Alessio FALOCCHI}
\address{\hbox{\parbox{5.7in}{\medskip\noindent{Dipartimento di Scienze Matematiche, \\
Politecnico di Torino,\\ Corso Duca degli Abruzzi 24, 10129 Torino, Italy. \\[3pt]
\em{E-mail address: }{\tt alessio.falocchi@polito.it}}}}}


\keywords{eigenvalues; plates; mass density}

\subjclass[2010]{35J40; 35P05; 74K20}

\begin{abstract} 
	We study the spectrum of non-homogeneous partially hinged plates having structural engineering applications. A possible way to prevent instability phenomena is to maximize the ratio between the frequencies of certain oscillating modes with respect to the density function of the plate; we prove existence of optimal densities and we investigate their analytic expression. This analysis suggests where to locate reinforcing material within the plate; some numerical experiments give further information and support the theoretical results.

\end{abstract}

\maketitle

	\section{Introduction}
	In recent years the trend in bridge design is to replace expensive experiments in wind tunnels with numerical tests; hopefully, these tests should be preceded by a suitable mathematical modelling and, possibly, by analytic arguments. In particular, since it is by now well-established that reliable models for suspension bridges should have enough degrees of freedom to display torsional oscillations, it is convenient to model the deck of the bridge by means of a long narrow rectangular thin plate $\Omega\subset \R^2$, hinged at short edges and free on the remaining two, see \cite{fergaz} and problem \eqref{weight} below. \par  When the wind comes up against the deck of the bridge, a form of dynamic instability arises, which appears as uncontrolled vortices and it is usually named \emph{flutter}. The origin of asymmetric vortices generates a forcing lift which launches vertical oscillations of the deck; this phenomenon finds confirmation in wind tunnel tests, see e.g. \cite{larsen}. In particular, a transition between these vertical oscillations to torsional ones may happen which, in some cases, leads to the collapse of the bridge; we refer to \cite[Chapter 1]{bookgaz} for a survey of historical events where this phenomenon occurred, among which the infamous Tacoma Narrows Bridge collapse. Therefore, it becomes extremely important preventing flutter instability to provide a structure strong and safe.
 Rocard \cite{rocard} suggested that for common bridge there exists a threshold of wind velocity $V_c$ at which flutter arises. 
The computation of $V_c$ is not an easy task, since it depends on the wind and on the geometric features of the deck; a possible way is to determine it experimentally. On the other hand, in engineering literature there exist some closed formulas for $V_c$; even if the debate on these formulas is still open, it seems to be accordance in thinking that the critical velocity depends on the frequencies or, equivalently, on the eigenvalues of the normal modes of the deck, see \cite{como,Irvine,rocard}. More precisely, since $V_c$ represents the critical threshold at which an energy transfer occurs between the $j$-th and the $i$-th mode of oscillation, most of the authors propose $V_c$  directly proportional to the difference between the square of the corresponding eigenvalues $\lambda_i> \lambda_j$, i.e.
$$V_c \propto (\lambda_i^2-\lambda_j^2)\,.$$
It follows that a way to increase the critical velocity $V_c$, and in turn to prevent instability, is by increasing the distance between $\lambda_i^2$ and $\lambda_j^2$; this purpose is achievable moving the ratio $(\lambda_i/\lambda_j)^2$ away as much as possible from $1$. A theoretical explanation of this fact was given in \cite{bgz}, within the classical stability theory of Mathieu equations, by relating large ratios of eigenvalues to the situation in which the instability resonant tongues of the Mathieu diagram become very thin. \par
Coming back to the plate model of the bridge, in order to prevent dynamical instability, different strategies to optimize the design of the plate have been proposed in literature; for instance, one may modify its shape, see \cite{bebuga2}, or rearrange the materials composing it, see \cite{bebugazu,befafega}. Within the present research, we exploit the latter approach to maximize the ratio of selected eigenvalues of a partially hinged non-homogeneous plate. More precisely, by rescaling, we assume that the plate has length $\pi$ and width $2\ell$ with $2\ell\ll\pi$ so that
	$$
	\Omega=(0,\pi)\times(-\ell,\ell)\subset\R^2\,;
	$$
then we characterize the non-homogeneity of the plate by a density function $p=p(x,y)$ and we consider the weighted eigenvalues problem:
	\begin{equation}\label{weight}
	\begin{cases}
	\Delta^2 u=\lambda\, p(x,y) u & \qquad \text{in } \Omega \\
	u(0,y)=u_{xx}(0,y)=u(\pi,y)=u_{xx}(\pi,y)=0 & \qquad \text{for } y\in (-\ell,\ell)\\
	u_{yy}(x,\pm\ell)+\sigma
	u_{xx}(x,\pm\ell)=u_{yyy}(x,\pm\ell)+(2-\sigma)u_{xxy}(x,\pm\ell)=0
	& \qquad \text{for } x\in (0,\pi)\, .
	\end{cases}
	\end{equation}
	The boundary conditions on short edges are of Navier type, see \cite{navier}, and model the situation in which the deck of the bridge is hinged on $\{0,\pi\}\times(-\ell,\ell)$. Instead, the boundary conditions on large edges are of Neumann type, see \cite{copro,provenzano}, they model the fact that the deck is free to move vertically and involve the Poisson ratio $\sigma$ which, for most of materials, satisfies $\sigma\in(0,1/2)$. Finally, we focus on densities $p$ satisfying some natural constraints, i.e. for $\alpha,\beta \in (0,+\infty)$ with $\alpha<\beta$ fixed, we assume that $p$ belongs to the following class of weights 
\begin{equation} \label{eq:famiglia}
P_{\alpha, \beta}:=\left\{p\in L^\infty(\Omega):\alpha\leq p\leq\beta\,,\quad  p(x,y)=p(x,-y)  \ \text{a.e. in } \Omega\\ \text{ and }\\ \int_{\Omega}p\,dxdy=|\Omega| \, \right\} \, .
\end{equation}
The integral condition in \eqref{eq:famiglia} represents the preservation of the total mass of the plate, while the symmetry requirement on $p$ means that we focus on designs which are symmetric with respect to the mid-line of the roadway. From a mathematical point of view, the symmetry of $p$ produces two classes of eigenfunctions of \eq{weight}, respectively, even or odd in the $y$-variable, that we named \emph{longitudinal} and \emph{torsional} modes. In order to prevent the energy transfer from longitudinal to torsional modes, one may study the effect of the weight $p$ on the ratio $\nu(p)/\mu(p)$, where $\nu$ and $\mu$ are two selected eigenvalues corresponding, respectively, to a torsional and a longitudinal mode. Since the final goal is to find the best rearrangement of materials in $\Omega$ which maximizes this ratio, we study, either from a theoretical and a numerical point of view, the optimization problem:
\begin{equation}\label{opt_intro}
\mathcal{R}=\sup_{p\in P_{\alpha,\beta}}\dfrac{\nu(p)}{\mu(p)}.
\end{equation}
We refer to \cite{benguria} for optimization results on the ratio of eigenvalues of second order operators subject to domain perturbations and to \cite{keller} for optimization results, with respect to the weight, in $1$-dimensional domains; see also \cite[Chapter 9]{henrot} and references therein. In particular, in \cite{keller} the author proved that the weight maximizing the considered ratio is of \emph{bang-bang} type, namely a piecewise constant function, symmetric with respect to the middle of the string and getting the minimum value there. Unfortunately, the techniques exploited in \cite{keller}  are closely related to the 1-dimensional nature of the problem and seem not applicable to our situation. Furthermore, here, things are complicated by dealing with a fourth order operator with non standard boundary conditions, for which no general positivity results are known. We refer the interested reader to \cite{befafega} where a partial positivity property result was proved for the operator in \eq{weight}.\par  As a consequence of what remarked, at the current state of art, a complete theoretical solution to problem \eq{opt_intro} is difficult to reach and we proceed by steps. More precisely, we concentrate our efforts in looking for weights increasing $\nu(p)$ or reducing $\mu(p)$, separately. The numerical results we collect in Section \ref{num2} reveal that this apparently not rigorous approach turns out to be effective in increasing the ratio \eq{opt_intro}; indeed, as a matter of fact, weights having strong effect on torsional eigenvalues $\nu(p)$ produce very confined effects on longitudinal eigenvalues $\mu(p)$, and viceversa. In this regard, preliminary results were obtained in \cite{befafega}, where the goal was minimizing the \emph{first} eigenvalue of \eqref{weight}, see Proposition \ref{thm-mu1} below. The focus of the present paper is on \emph{higher} eigenvalues, furthermore we deal with a supremum problem and different methods are required; hence, the optimization issue \eq{opt_intro} deserves to be studied independently.  About the optimization of the \emph{first} weighted eigenvalue of $\Delta^2$ under Dirichlet or Navier boundary conditions, we mention the papers \cite{anedda1},\cite{anedda2},\cite{CV}-\cite{cuccu22}. Concerning \emph{higher} eigenvalues we refer to \cite{lapr} where the authors provide a detailed spectral optimization analysis, upon density variations, of general elliptic operators of arbitrary order subject to several kinds of boundary conditions. In \cite{chen} numerical results were given for the Dirichlet and Navier version of of \eq{weight}; while in \cite{copro} sharp upper bounds for weighted eigenvalues in the Neumann case were provided.\par In order to increase the numerator of \eq{opt_intro}, i.e. the first torsional eigenvalue, we adapt to our situation the approach developed by \cite{cox}, in the second order case, and which was partially extended to the fourth order by \cite{cuccu22}, in order to optimize the first biharmonic eigenvalue under Navier or Dirichlet boundary conditions. The main novelty of the present paper is the exploitation of the precise information we have from \cite{fergaz} on the spectrum of problem \eq{weight} with $p\equiv 1$; this fact allows us to partially overcome the loss of positivity results for \eqref{weight}. Moreover, since we work with a domain $\Omega\subset \mathbb{R}^2$ rectangular, we perform some computations explicitly; in particular, we obtain upper bounds on longitudinal eigenvalues that, suitable combined with some rearrangements arguments inspired by \cite{chanillo} and \cite{CV}, give the analytic expression of weights reducing the denominator in \eq{opt_intro}, see Theorem \ref{thm-muj}. Finally, in Section \ref{num2} we complete our theoretical results with numerical experiments; they provide  weights increasing the ratio \eq{opt_intro} and suggest a  maximizer to \eq{opt_intro}.

\par
The paper is organized as follows. In Section \ref{problem} we introduce some preliminaries and notations and we recall the known results in the case $p\equiv 1$. Section \ref{main} is devoted to the main results of the paper, which we prove in Section \ref{proof}. The theoretical results are complemented with numerical experiments collected in Section \ref{num2}, where we give some practical suggestions about the location of the reinforcements in the plate. Finally, in the Appendix we complete our analysis of problem \eqref{weight} by providing a Weyl-type asymptotic law for the eigenvalues. 
 \par

\section{Preliminaries and notations}\label{problem}
From now onward we fix $\Omega=(0,\pi)\times(-\ell,\ell)\subset\R^2$ with $\ell>0$ and $\sigma\in(0,1/2)$. We denote by $\|\cdot\|_q$ the norm related to the Lebesgue spaces $L^q(\Omega)$ with $1\leq q\leq\infty$ and we shall omit the set $\Omega$ in the notation of the functional spaces, e.g. $V:=V(\Omega)$. The natural functional space where to set problem \eq{weight} is
$$
H^2_*=\big\{u\in H^2: u=0\mathrm{\ on\ }\{0,\pi\}\times(-\ell,\ell)\big\}\,.
$$
Note that the condition $u=0$ has to be meant in a classical sense because $\Omega$ is a planar domain and the energy space $H^2_*$ embeds into continuous functions. 
Furthermore, $H^2_*$ is a Hilbert space when endowed with the scalar product
$$
(u,v)_{H^2_*}:=\int_\Omega \left[\Delta u\Delta v+(1-\sigma)(2u_{xy}v_{xy}-u_{xx}v_{yy}-u_{yy}v_{xx})\right]\, dx \, dy \,
$$
and associated norm
$$
\|u\|_{H^2_*}^2=(u,u)_{H^2_*} \, ,
$$
which is equivalent to the usual norm in $H^2$, see \cite[Lemma 4.1]{fergaz}.  Then, we reformulate problem \eq{weight} in the following weak sense
\begin{equation}
\label{eigenweak}
(u,v)_{H^2_*} =\lambda\int_{\Omega}p(x,y)uv\,dx\,dy \qquad\forall v\in H^2_*,
\end{equation}
where $p$ belongs to the family of weights $P_{\alpha,\beta}$ defined in \eqref{eq:famiglia} with  $\alpha,\beta \in (0,+\infty)$ and $\alpha<\beta$ fixed. 
We underline that condition $p\in P_{\alpha,\beta}$ implies $\alpha\leq 1 \leq \beta$ since $\int_\Omega p\,dx\,dy=|\Omega|$. Moreover, it is not restrictive to assume $\alpha<1<\beta$ when we consider weights that do not coincide a.e. with the constant function $p \equiv 1$. In fact, if we assume $\beta= 1$, it must be $p = 1$ a.e. in $\Omega$, since otherwise we would have
$\int_\Omega p\,dx\,dy<|\Omega|$; similarly, if we put $\alpha=1$. For these reasons, since the aim of our research is to study the effect of a non-constant weight on the eigenvalues of \eqref{weight}, in what follows we will always assume $$0<\alpha<1<\beta\,.$$ \par The bilinear form $(u,v)_{H^2_*}$ is continuous
and coercive and $p\in L^\infty$ is positive a.e. in $\Omega$,
by standard spectral theory of self-adjoint operators we infer

\begin{proposition}\label{general facts}
Let $p\in P_{\alpha,\beta}$. Then all eigenvalues of \eqref{eigenweak} have finite multiplicity and can be represented by means of an increasing and divergent sequence $\lambda_h(p)$ ($h\in \N_+$), where each eigenvalue is repeated according to its multiplicity. Furthermore, the corresponding eigenfunctions form a compete system in $H^2_*$.
\end{proposition}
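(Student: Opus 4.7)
The plan is to reduce the weighted eigenvalue problem \eqref{eigenweak} to the spectral analysis of a compact self-adjoint operator on a Hilbert space, and then invoke the classical spectral theorem.

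First I would exploit the fact, already observed in the excerpt, that $(\cdot,\cdot)_{H^2_*}$ is a continuous and coercive bilinear form on $H^2_*$. For a given $f\in L^2$, by the Lax--Milgram theorem (equivalently, Riesz representation) there exists a unique $u_f\in H^2_*$ solving
\[
(u_f,v)_{H^2_*}=\int_\Omega p(x,y)\, f(x,y)\, v(x,y)\, dx\, dy\qquad\forall v\in H^2_*,
\]
where continuity of the right-hand side is ensured by $p\in L^\infty(\Omega)$ together with the embedding $H^2_*\hookrightarrow L^2$. Define the solution operator $T:L^2\to L^2$ by $Tf:=u_f$, where in the target $u_f$ is viewed via the embedding $H^2_*\hookrightarrow L^2$.

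Next I would show that $T$ is compact and self-adjoint when $L^2$ is endowed with the weighted inner product
\[
\langle u,v\rangle_p:=\int_\Omega p\, u\, v\, dx\, dy,
\]
which, thanks to the condition $\alpha\le p\le\beta$, is equivalent to the standard $L^2$ inner product. Compactness follows because $T$ factors through $H^2_*$ and the embedding $H^2_*\hookrightarrow L^2$ is compact (Rellich--Kondrachov; $\Omega$ is a bounded Lipschitz domain). Self-adjointness with respect to $\langle\cdot,\cdot\rangle_p$ is a direct consequence of the symmetry of $(\cdot,\cdot)_{H^2_*}$: indeed, for $f,g\in L^2$, taking $v=Tg$ in the defining identity of $Tf$ and $v=Tf$ in the one for $Tg$ gives $\langle Tf,g\rangle_p=(Tf,Tg)_{H^2_*}=\langle f,Tg\rangle_p$. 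Moreover, choosing $f=u_f$ in the same identity shows that $T$ is positive, so its spectrum is a nonnegative sequence accumulating only at $0$.

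Applying the spectral theorem for compact self-adjoint positive operators on the Hilbert space $(L^2,\langle\cdot,\cdot\rangle_p)$ yields a nonincreasing sequence $\mu_h(p)\searrow 0$ of eigenvalues, each of finite multiplicity, with an associated orthonormal basis $\{\varphi_h\}$ of eigenfunctions. Setting $\lambda_h(p):=1/\mu_h(p)$ one recovers an increasing divergent sequence of eigenvalues of \eqref{eigenweak}, each with finite multiplicity.

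The only step that requires a little care is the completeness in $H^2_*$ rather than merely in $L^2$. I would argue as follows: the $\varphi_h$ belong to $H^2_*$ and, from the eigenvalue identity, $(\varphi_h,\varphi_k)_{H^2_*}=\lambda_h(p)\langle\varphi_h,\varphi_k\rangle_p$, so after normalization $\{\varphi_h/\sqrt{\lambda_h(p)}\}$ is orthonormal in $H^2_*$. Given any $u\in H^2_*$ orthogonal to every $\varphi_h$ in $H^2_*$, the eigenvalue identity yields $\langle u,\varphi_h\rangle_p=0$ for all $h$; since $\{\varphi_h\}$ is complete in $L^2$ this forces $u=0$, proving completeness in $H^2_*$. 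This is the only mildly delicate point; the rest of the argument is a textbook application of compact self-adjoint operator theory, so I do not foresee a serious obstacle.
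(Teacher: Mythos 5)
Your argument is correct and coincides with the standard spectral-theory route the paper invokes (and delegates to the reference \cite{lapr}, Lemma~2.1): pass to the compact, self-adjoint, positive solution operator $T$ on $L^2$ with the $p$-weighted inner product, apply the spectral theorem, and transfer completeness back to $H^2_*$ via the eigenvalue identity. One small step should be made explicit, namely injectivity of $T$, which is needed so that the eigenfunctions span all of $L^2$ and no "zero eigenvalue" block appears: if $Tf=0$ then $\int_\Omega p\,f\,v\,dx\,dy=0$ for every $v\in H^2_*$, and since $H^2_*$ is dense in $L^2$ and $p\geq\alpha>0$ a.e., this forces $f=0$.
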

We refer to \cite[Lemma 2.1]{lapr} for a detailed proof of Proposition \ref{general facts} in a more general setting. On the other hand, it is well-known, see  \cite{courant,henrot}, that the following variational representation of eigenvalues holds for every $h\in \N_+$:
\begin{equation}\label{caract1}
\lambda_h(p)=\inf_{\substack{W_h\subset H^2_*\\\dim W_h=h}}\hspace{2mm}\sup_{\substack{u\in W_h\setminus\{0\}}}\frac{\|u\|^2_{H^2_*}}{\|\sqrt{p}u\|^2_2}.
\end{equation} 
When $h=1$, \eqref{caract1} includes the well known characterization for the first eigenvalue
\begin{equation}\label{first}
\lambda_1(p)=\inf_{\substack{u\in H^2_*\setminus\{0\}}}\frac{\|u\|^2_{H^2_*}}{\|\sqrt{p}u\|^2_2}.
\end{equation} 
If $h\geq 2$, the minimum in \eqref{caract1} is achieved by the space $W_h$ spanned by the $h$-th first eigenfunctions. Assuming that the first $h-1$ eigenfunctions, $u_1, u_2, \dots, u_{h-1}$ are known, one also obtains
\begin{equation}\label{caract2}
\lambda_h(p)=\inf_{\substack{u\in H^2_*\setminus\{0\}\\ (u,u_i)_{H^2_*}=0\, \, \forall \, i=1,...,h-1}}\frac{\|u\|^2_{H^2_*}}{\|\sqrt{p}u\|^2_2}\,.
\end{equation}

When  $p\equiv 1$, we recall that the whole spectrum of \eqref{weight} was determined in \cite{fergaz} (see also \cite{bebuga2}); we collect what known in the following proposition. 
\begin{proposition}\label{eigenvalue} \cite{fergaz}
	 Consider problem \eqref{weight} with $p\equiv 1$. Then:
	 \begin{itemize}
	\item[$(i)$] for any $m\ge1$ integer there exists a unique eigenvalue $\lambda=\Lambda_{m,1}\in((1-\sigma^2)m^4,m^4)$ with corresponding eigenfunctions
$\phi_{m,1}(y)\sin(mx)$ with $\phi_{m,1}(y)$ given in \eq{phi};
	\item[$(ii)$] for any $m\ge1$ and any $k\ge2$ integers there exists a unique eigenvalue $\lambda=\Lambda_{m,k}>m^4$ satisfying
	$\left(m^2+\frac{\pi^2}{\ell^2}\left(k-\frac{3}{2}\right)^2\right)^2<\mu_{m,k}<\left(m^2+\frac{\pi^2}{\ell^2}\left(k-1\right)^2\right)^2$, with corresponding eigenfunctions $\phi_{m,k}(y)\sin(mx)$ with $\phi_{m,k}(y)$ given in \eq{phi};
	\item[$(iii)$] for any $m\ge1$ and any $k\ge2$ integers there exists a unique eigenvalue $\lambda=\Lambda^{m,k}>m^4$ with corresponding eigenfunctions $\psi_{m,k}(y)\sin(mx)$ with $\psi_{m,k}(y)$ given in \eq{phi};
	\item[$(iv)$] for any $m\ge1$ integer, satisfying $\ell m\sqrt 2\, \coth(\ell m\sqrt2 )>\left(\frac{2-\sigma}{\sigma}\right)^2$, there exists a unique
	eigenvalue $\lambda=\Lambda^{m,1}\in(\Lambda_{m,1},m^4)$ with corresponding eigenfunctions $\psi_{m,1}(y)	\sin(mx)$ with $\psi_{m,1}(y)$ given in \eq{phi}.
	\end{itemize}
		Finally, if
	\begin{equation}\label{c0}
	\text{the unique positive solution $s>0$ of: }\tanh(\sqrt{2}s\ell)=\left(\frac{\sigma}{2-\sigma}\right)^2\, \sqrt{2}s\ell\quad  \text{is not an integer,}
	\end{equation}
	then the only eigenvalues are the ones given in $(i)-(iv)$.
\end{proposition}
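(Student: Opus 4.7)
The plan is to exploit the Navier boundary conditions on the short edges to separate variables. Any $L^2$ eigenfunction of \eqref{weight} with $p\equiv 1$ admits a Fourier sine expansion $u(x,y)=\sum_{m\ge 1}h_m(y)\smx$, and orthogonality reduces the PDE eigenvalue problem to an $m$-indexed family of one-dimensional fourth-order ODE problems: for each $m\ge 1$,
$$h_m''''(y)-2m^2 h_m''(y)+(m^4-\lambda)h_m(y)=0\quad\text{in }(-\ell,\ell),$$
together with the free-edge conditions $h_m''(\pm\ell)=\sigma m^2 h_m(\pm\ell)$ and $h_m'''(\pm\ell)=(2-\sigma)m^2 h_m'(\pm\ell)$ inherited from \eqref{weight}. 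Since $\{\smx\}_{m\ge 1}$ is complete in $L^2(0,\pi)$ and compatible with the Navier conditions at $x=0,\pi$, every eigenvalue of the original problem is an eigenvalue of one of the reduced problems and conversely; this delivers the completeness assertion at the end of the proof.

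Next I would exploit the reflection symmetry $y\mapsto -y$ of the reduced problem to split into an even (longitudinal) and an odd (torsional) subproblem. The characteristic roots of the ODE are $r^2=m^2\pm\sqrt{\lambda}$, so the form of the general solution changes across $\lambda=m^4$: for $\lambda>m^4$, even solutions read $A\cosh(\alpha y)+B\cos(\beta y)$ with $\alpha=\sqrt{m^2+\sqrt\lambda}$ and $\beta=\sqrt{\sqrt\lambda-m^2}$, while odd ones replace $\cosh,\cos$ by $\sinh,\sin$; for $0<\lambda<m^4$ both basic functions become hyperbolic. In each parity the two boundary conditions yield a homogeneous $2\times 2$ linear system, and the vanishing of its determinant $D^{\pm}_m(\lambda)$ is an explicit hyperbolic-trigonometric equation in $\alpha\ell,\beta\ell$ whose zeros in $\lambda$ are precisely the desired eigenvalues $\Lambda_{m,k}$ (even) and $\Lambda^{m,k}$ (odd).

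The main work is then to count and localise the zeros of $D^{\pm}_m$. On the intervals of the form $\bigl((m^2+\frac{\pi^2}{\ell^2}(k-\tfrac32)^2)^2,(m^2+\frac{\pi^2}{\ell^2}(k-1)^2)^2\bigr)$, which correspond to $\beta\ell$ lying between two consecutive half-integer multiples of $\pi$, a direct comparison of signs at the endpoints combined with monotonicity of the dominant trigonometric factor shows that $D^{\pm}_m$ changes sign exactly once, yielding one simple root per interval and hence items (ii)--(iii). The small eigenvalues of items (i) and (iv) lie instead in $(0,m^4)$, where the trigonometric terms are absent: evaluating $D^{\pm}_m$ at $\lambda=(1-\sigma^2)m^4$ and at $\lambda=m^4$ (the latter via a L'Hopital-type limit, since $\beta\to 0$) produces the existence of $\Lambda_{m,1}\in((1-\sigma^2)m^4,m^4)$ unconditionally and the existence of $\Lambda^{m,1}$ under the explicit threshold $\ell m\sqrt{2}\coth(\ell m\sqrt{2})>\bigl(\frac{2-\sigma}{\sigma}\bigr)^2$. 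The hard part, as anticipated, is precisely this zero-counting: proving that in each nominal interval \emph{exactly one} simple root appears and that no stray roots appear elsewhere requires careful monotonicity and asymptotic analysis of hyperbolic-trigonometric expressions, and condition \eqref{c0} is what rules out the degenerate situation in which the small odd eigenvalue coincides with $m^4$ and the count would fail.
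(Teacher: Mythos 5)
This proposition is quoted verbatim from \cite{fergaz}; the present paper does not prove it but simply recalls it, so there is no internal proof to compare against. Your outline faithfully reconstructs the Ferrero--Gazzola strategy: Fourier sine expansion in $x$ (legitimate because the Navier conditions at $x=0,\pi$ are compatible with $\sin(mx)$), reducing the biharmonic eigenvalue problem to an $m$-indexed family of fourth-order ODEs on $(-\ell,\ell)$ with the free-edge conditions; parity splitting into even and odd $y$-modes; explicit solution bases built from the characteristic roots $r^2=m^2\pm\sqrt{\lambda}$, whose nature (purely hyperbolic vs.\ mixed hyperbolic--trigonometric) changes across $\lambda=m^4$; and a $2\times2$ determinant $D_m^{\pm}(\lambda)$ whose zeros localize the eigenvalues. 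You also correctly identify the role of condition \eqref{c0}: it rules out an extra eigenvalue sitting at $\lambda=m^4$, where the characteristic polynomial degenerates and the solution basis changes form.

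That said, what you call ``the hard part'' is precisely where all of the analytic content of the cited proof lives, and your proposal does not carry it out. Proving that $D_m^{+}$ has exactly one simple zero in each interval $\bigl((m^2+\tfrac{\pi^2}{\ell^2}(k-\tfrac32)^2)^2,(m^2+\tfrac{\pi^2}{\ell^2}(k-1)^2)^2\bigr)$, that $D_m^{+}$ has exactly one zero in $((1-\sigma^2)m^4,m^4)$ and none in $(0,(1-\sigma^2)m^4]$, that $D_m^{-}$ has a zero in $(\Lambda_{m,1},m^4)$ if and only if $\ell m\sqrt{2}\coth(\ell m\sqrt{2})>\bigl(\tfrac{2-\sigma}{\sigma}\bigr)^2$, and that no further zeros hide anywhere else, each requires careful monotonicity and endpoint-sign estimates on hyperbolic--trigonometric expressions, plus the L'H\^{o}pital-type analysis at $\lambda=m^4$. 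So your proposal is a correct and well-organized blueprint matching the cited reference, but it is a sketch rather than a proof; the zero-counting claims are asserted, not established.
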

In the following, we will always assume that \eq{c0} holds. \par
At last, we recall the analytic expression of the functions $\phi_{m,k}(y)$ and $\psi_{m,k}(y)$ of Proposition \ref{eigenvalue}. For $m,k\in \N_+$, we define:
{\small \begin{equation} \label{phi}
\begin{split}
\phi_{m,1}(y)&:=\dfrac{1}{N_{m,1}}\bigg\{\,\frac{\sigma m^2-c_{m,1}^2 }{\cosh (\ell\, \overline c_{m,1})} \cosh (y\, \overline c_{m,1}) +
\frac {\overline c_{m,1}^2-\sigma m^2 }{\cosh (\ell\, c_{m,1})}\   \, \cosh (y\, c_{m,1})   \bigg\}\\
\phi_{m,k}(y)&:=\dfrac{1}{N_{m,k}}\bigg\{\,\frac{\sigma m^2+c_{m,k}^2 }{\cosh (\ell\, \overline c_{m,k})} \cosh (y\, \overline c_{m,k}) +
\frac {\overline c_{m,k}^2-\sigma m^2 }{\cos (\ell\, c_{m,k})}\   \, \cos (y\, c_{m,k})   \bigg\}\\ 
\psi_{m,k}(y)&:=\dfrac{1}{N^{m,k}}\bigg\{\,\frac{\sigma m^2+d_{m,k}^2 }{\sinh (\ell\, \overline d_{m,k})} \sinh (y\, \overline d_{m,k}) +
\frac {\overline d_{m,k}^2-\sigma m^2 }{\sin (\ell\, d_{m,k})}\   \, \sin (y\,d_{m,k})   \bigg\}\\ 
\psi_{m,1}(y)&:=\dfrac{1}{N^{m,1}}\bigg\{\,\frac{\sigma m^2-d_{m,1}^2 }{\sinh (\ell\, \overline d_{m,1})} \sinh (y\,\overline d_{m,1}) +
\frac {\overline d_{m,1}^2-\sigma m^2 }{\sinh (\ell\, d_{m,1})}\   \, \sinh (y\, d_{m,1})   \bigg\} \,,
\end{split}
\end{equation}}
where 
\begin{equation*}
\begin{split}
c_{m,k}:=\sqrt{|(\Lambda_{m,k})^{1/2}-m^2|}\qquad \overline c_{m,k}:=\sqrt{ (\Lambda_{m,k})^{1/2}+m^2}\\d_{m,k}:=\sqrt{|(\Lambda^{m,k})^{1/2}-m^2|}\qquad \overline d_{m,k}:=\sqrt{(\Lambda^{m,k})^{1/2}+m^2}\,,
\end{split}
\end{equation*}
with $\Lambda_{m,k}$ and $\Lambda^{m,k}$ defined in Proposition \ref{eigenvalue}, and $N_{m,k}, N^{m,k}\in \R_+$ are fixed in such a way that $\|\phi_{m,k}(y)\,\sin(mx)\|_2=\|\psi_{m,k}(y)\,\sin(mx)\|_2=1$.
\begin{remark}

	Denote by $\lambda_h(1)$ ($h\in \N_+$) the sequence of eigenvalues of \eqref{weight} with $p\equiv 1$; this sequence can be written explicitly by ordering the eigenvalues given by Proposition \ref{eigenvalue}. Then, for all $p\in P_{\alpha,\beta}$, the characterization \eqref{caract1} readily gives the stability inequality
	$$\frac{\lambda_h(1)}{\beta}\leq \lambda_h(p)\leq \frac{\lambda_h(1)}{\alpha}\,,$$
	for every $h\in\mathbb{N}_+$. In applicative terms, if we choose materials having similar densities, we obtain eigenvalues close to those of the homogeneous plate.

\end{remark}
 By Proposition \ref{eigenvalue} we distinguish two classes of eigenfunctions of problem \eqref{weight} with $p\equiv 1$: \par\smallskip\par $\bullet$ $y$-even eigenfunctions $\phi_{m,k}(y)\,\sin(mx)$ corresponding to the eigenvalues $\Lambda_{m,k}$;  \par $\bullet$ $y$-odd eigenfunctions $\psi_{m,k}(y)\,\sin(mx)$ corresponding to the eigenvalues $\Lambda^{m,k}$.\par\smallskip\par  As in \cite{bogamo}, this suggests to introduce the subspaces of $H^2_*$:
 \begin{equation*}\label{subspaces}
 \begin{split}
 &H^2_\mathcal{E}:=\{u\in H^2_*: u(x,-y)=u(x,y)\quad\forall (x,y)\in\Omega\},\\&H^2_\mathcal{O}:=\{u\in H^2_*: u(x,-y)=-u(x,y)\quad\forall (x,y)\in\Omega\},
 \end{split}
 \end{equation*}
 where
 \begin{equation*}
 H^2_\mathcal{E}\perp H^2_\mathcal{O}, \hspace{5mm}H^2_*=H^2_\mathcal{E}\oplus H^2_\mathcal{O}\,.
 \end{equation*}
 By the symmetry assumption on $p\in P_{\alpha, \beta}$ it is readily verified that all linearly independent eigenfunctions of \eq{weight} may be  thought in the class $H^2_\mathcal{E}$ or in the class $H^2_\mathcal{O}$. We call the  eigenfunctions belonging to $H^2_\mathcal{E}$ \emph{longitudinal} modes and those belonging to $H^2_\mathcal{O}$ \emph{torsional} modes.  In what follows we order all eigenvalues of \eqref{weight}, repeated according to multiplicity, into two increasing and divergent sequences: the sequence of the eigenvalues $\mu_j(p)$ ($j\in \N_+$) corresponding to longitudinal eigenfunctions and the sequence of the eigenvalues $\nu_j(p)$ ($j\in \N_+$) corresponding to torsional eigenfunctions. From Proposition \ref{eigenvalue} we infer that the sequences $\mu_j(1)$ and $\nu_j(1)$ can be written explicitly by ordering, respectively, the numbers $\Lambda_{m,k}$ and $\Lambda^{m,k}$. In particular, we have 
  \begin{equation}\label{primi}
\lambda_1(1)=\mu_1(1)=\Lambda_{1,1} < \nu_1(1)=\min\{ \Lambda^{1,1},\Lambda^{1,2} \}\,.
 \end{equation}
For actual bridges, one usually has $\nu_1(1)=\Lambda^{1,2}$, indeed the inequality required in Proposition \ref{eigenvalue}-$iv)$ is not satisfied for $\ell$ small, see Table \ref{eig num} in Section \ref{num2}. We note that, even in the case $p\equiv1$, simplicity of eigenvalues is not know, hence, in principle, the same eigenvalue may correspond either to longitudinal and torsional eigenfunctions. 
 However, our numerical results show that ``low" eigenvalues are simple for $\ell\ll\pi$ and $\sigma\in(0,1/2)$, furthermore ``high" modes are activated by bending energy so large that they not appear in realistic situations; it follows that eigenvalues are expected to be simple in the applications.   \par
 For future purposes it is convenient to characterize in a variational way longitudinal and torsional eigenvalues. First, for $j\in \N_+$ fixed, we introduce, respectively, the spaces $U^\mathcal{E}_j\subset H^2_\mathcal{E}$ of the first $(j-1)$ longitudinal eigenfunctions and $U^\mathcal{O}_j\subset H^2_\mathcal{O}$ of the first $(j-1)$ torsional eigenfunctions of \eqref{weight}. Then we define
 \begin{equation*}
 V^\mathcal{E}_j:=\{u\in H^2_\mathcal{E}\,:\,(u,v)_{H^2_*}=0 \quad \forall v\in U^\mathcal{E}_{j}\},\qquad V^\mathcal{O}_j:=\{u\in H^2_\mathcal{O}\,:\,(u,v)_{H^2_*}=0 \quad \forall v\in U^\mathcal{O}_{j}\}\,,
 \end{equation*}
 where if $j=1$ we mean $V^\mathcal{E}_1=H^2_\mathcal{E}$ and $V^\mathcal{O}_1=H^2_\mathcal{O}$.
Finally, using \eqref{caract2}, we set
\begin{equation}\label{mu-caract}
\mu_j(p)=\inf_{\substack{u\in V^\mathcal{E}_j\setminus\{0\} }}
\frac{\|u\|_{H^2_*}^2}{\|\sqrt{p}\,u\|_{2}^2}
\quad \text{and} \quad
\nu_j(p)=\inf_{\substack{u\in V_j^\mathcal{O} \setminus\{0\} }}
\frac{\|u\|_{H^2_*}^2}{\|\sqrt{p}\,u\|_{2}^2}\,.
\end{equation}

\section{Main results}\label{main}
As in Section \ref{problem} we always assume 
$$
0<\sigma<\frac{1}{2}\quad\text{and}\quad \alpha<1<\beta \quad (\alpha,\beta\in (0,+\infty)).
$$ 
The final goal of our analysis is to maximize the ratio \eqref{opt_intro} with the family $P_{\alpha,\beta}$ defined in \eqref{eq:famiglia}. To this aim we need first to clarify which eigenvalues we shall consider in the ratio; the model situation we have in mind is a motion concentrated on a longitudinal mode, with corresponding eigenvalue $\mu_j$ and we want to prevent the transfer of energy from this mode to the nearest torsional one $\nu_i$, for suitable $i,j\in \N_+$. Rocard \cite[p.169]{rocard} claims that, for the usual design of bridges, the eigenvalues of the observed longitudinal oscillating modes are larger than those of torsional modes, i.e. $\mu_j< \nu_i$. For the homogeneous plate this inequality readily follows from \eq{primi} if $j=i=1$. More in general, we set
\begin{equation}\label{j0}
j_0:=\max\{j\in \N_+\\: \\\nu_1(1)-\mu_{j}(1)>0 \}\,.
\end{equation}
Clearly, $j_0\geq 1$ and $j_0=j_0(\ell, \sigma)$; in our numerical experiments, for several values of $\ell$ and $\sigma$ chosen, taking into account real bridges, we always obtain $j_0=10$. As explained in \cite[Section 1]{bfg} this number is in accordance with what reported in the Federal Report \cite{ammann}, since a moment before the collapse of the Tacoma Narrows Bridge the motion was involving nine or ten longitudinal waves. Coming back to the choice of the eigenvalues in the ratio \eqref{opt_intro},  for what observed, we finally focus on the problem
\begin{equation}\label{opt1}
\mathcal{R}=\sup_{p\in P_{\alpha,\beta}}\dfrac{\nu_1(p)}{\mu_{j_0}(p)}\,.
\end{equation}
Note that if $j_0>1$, then $\nu_1(p)/\mu_{j_0}(p)\leq \nu_1(p)/\mu_{j}(p)$ for all $1\leq j< j_0$; therefore weights $p$ increasing the value of $\nu_1(p)/\mu_{j_0}(p)$ also increase the value of $\nu_1(p)/\mu_{j}(p)$ for all $1\leq j< j_0$.

First we prove
\begin{theorem}\label{existence}
Let $j_0 \in \N_+$ be as defined in \eqref{j0}. Then, problem \eqref{opt1} admits a solution. 
\end{theorem}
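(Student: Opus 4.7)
The plan is a direct-method argument in the weak-$*$ topology of $L^\infty(\Omega)$, combined with the Courant min-max characterization restricted to the symmetry subspaces $H^2_\mathcal{E}$ and $H^2_\mathcal{O}$, and with the compactness of the embedding $H^2_* \hookrightarrow C^0(\overline\Omega)$.

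First I would fix a maximizing sequence $\{p_n\}\subset P_{\alpha,\beta}$, that is $\nu_1(p_n)/\mu_{j_0}(p_n)\to \mathcal R$. Since $\alpha\le p_n\le \beta$, Banach--Alaoglu furnishes $p^*\in L^\infty(\Omega)$ and a (non-relabelled) subsequence with $p_n\stackrel{*}{\rightharpoonup} p^*$ in $L^\infty(\Omega)$. The pointwise bounds, the symmetry $p_n(x,y)=p_n(x,-y)$ and the mass constraint $\int_\Omega p_n\,dx\,dy=|\Omega|$ are all linear conditions that survive passage to weak-$*$ limits (for the bounds, test against nonnegative functions in $L^1$; for the symmetry, test against functions odd in $y$; for the integral, test against the constant $1$). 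Hence $p^*\in P_{\alpha,\beta}$.

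The core step is the continuity of the two eigenvalues along $p_n\stackrel{*}{\rightharpoonup} p^*$. The key technical lemma I would prove is: if $u_n\rightharpoonup u$ in $H^2_*$ and $p_n\stackrel{*}{\rightharpoonup} p^*$ in $L^\infty$, then $\int_\Omega p_n u_n^2\,dx\,dy\to \int_\Omega p^* u^2\,dx\,dy$. This follows because, by Rellich--Kondrachov, $u_n\to u$ uniformly on $\overline\Omega$, so $u_n^2\to u^2$ in $L^1(\Omega)$, while $\|p_n\|_\infty\le\beta$ allows us to split
\[
\int_\Omega p_n u_n^2 \,dx\,dy= \int_\Omega p_n u^2 \,dx\,dy+ \int_\Omega p_n\bigl(u_n^2-u^2\bigr)\,dx\,dy,
\]
where the first term converges by weak-$*$ convergence and the second is controlled by $\beta\|u_n^2-u^2\|_1\to 0$. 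With this lemma in hand, continuity of $\nu_1$ is obtained by a two-sided bound: for the upper bound use the first torsional eigenfunction $v^*$ of $p^*$ as test function in the variational characterization \eqref{mu-caract} to get $\nu_1(p_n)\le \|v^*\|_{H^2_*}^2/\int p_n (v^*)^2 \to \nu_1(p^*)$; for the lower bound, take eigenfunctions $u_n$ realizing $\nu_1(p_n)$ and normalized by $\int p_n u_n^2=1$, use that $\nu_1(p_n)$ is bounded (by the stability inequality) to extract a weak limit $u^*\in H^2_\mathcal{O}$ with $\int p^*(u^*)^2=1$ and $\|u^*\|_{H^2_*}^2\le\liminf\|u_n\|_{H^2_*}^2$.

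For $\mu_{j_0}$ the same strategy works using the Courant min-max formulation on $H^2_\mathcal{E}$,
\[
\mu_{j_0}(p)=\inf_{\substack{W\subset H^2_\mathcal{E}\\ \dim W=j_0}}\sup_{u\in W\setminus\{0\}}\frac{\|u\|_{H^2_*}^2}{\int_\Omega p\,u^2\,dx\,dy},
\]
which is independent of the first $j_0-1$ eigenfunctions. The upper bound $\limsup\mu_{j_0}(p_n)\le\mu_{j_0}(p^*)$ is obtained by inserting the optimal $j_0$-dimensional subspace $W^*$ for $p^*$ into the variational problem for $p_n$, using uniform convergence on the unit sphere of the finite-dimensional $W^*$. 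For the lower bound, extract a convergent basis of the optimal subspaces $W_n$ (normalized in $H^2_*$) and use the lemma to identify the limit as a $j_0$-dimensional subspace admissible for $p^*$; non-degeneracy of the limit basis is ensured because $\int p_n u^2\ge \alpha\|u\|_2^2$ stays bounded away from $0$ on the unit sphere. Combining, $\mu_{j_0}(p_n)\to \mu_{j_0}(p^*)$ and therefore $\nu_1(p^*)/\mu_{j_0}(p^*)=\mathcal R$, proving attainment.

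I expect the main technical obstacle to be the lower bound for $\mu_{j_0}(p_n)$, because one must verify that the limit of the sequence of $j_0$-dimensional optimal subspaces does not collapse in dimension; this is where the uniform lower bound $\alpha>0$ on the weight and the compact embedding $H^2_*\hookrightarrow C^0(\overline\Omega)$ play the decisive role.
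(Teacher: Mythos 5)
Your proposal is correct and shares the paper's high-level strategy: equip $P_{\alpha,\beta}$ with the weak-$*$ topology of $L^\infty$, prove it is compact there, and prove that the eigenvalue maps $p\mapsto\nu_1(p)$ and $p\mapsto\mu_{j_0}(p)$ are sequentially continuous, so the ratio attains its supremum on the compact set. Where you genuinely diverge from the paper is in the proof of the continuity step, and specifically in how one rules out a ``drop'' of the $j_0$-th longitudinal eigenvalue in the limit. The paper passes to weak limits of the eigenpairs of $p_m$ to obtain a family of eigenvalues and $L^2(\overline p)$-orthonormal eigenfunctions of the limit weight, and then invokes Auchmuty's dual variational principle (Lemma \ref{lemmaauchmuty}) to show this list exhausts the spectrum: if some eigenfunction of $\overline p$ were $L^2(\overline p)$-orthogonal to all the limit eigenfunctions, the functional $\mathcal{A}_h(p_m,\cdot)$ would force its eigenvalue to dominate the divergent sequence $\overline\lambda_h$, a contradiction. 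You instead argue purely through the Courant min--max characterization restricted to $H^2_\mathcal{E}$: inserting the optimal $j_0$-dimensional subspace of $p^*$ gives $\limsup_n\mu_{j_0}(p_n)\leq\mu_{j_0}(p^*)$, and tracking the optimal subspaces $W_n$ of $p_n$ and passing to weak limits of a basis gives the reverse bound. Both routes work; Auchmuty's principle is a cleaner device for an arbitrary index $h$ and avoids the bookkeeping of finite-dimensional test subspaces, whereas your argument is more elementary and self-contained. One point worth tightening in your lower bound: the non-degeneracy of the limiting basis is not an immediate consequence of $\int_\Omega p_n u^2\geq\alpha\|u\|_2^2$ alone. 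What you actually need is that the $L^2(p_n)$ Gram matrix of an $H^2_*$-orthonormal basis of $W_n$ stays uniformly invertible; this follows because on the \emph{optimal} subspace $W_n$ the Rayleigh quotient is at most $\mu_{j_0}(p_n)$, so $\int_\Omega p_n u^2\geq\|u\|_{H^2_*}^2/\mu_{j_0}(p_n)$, with $\mu_{j_0}(p_n)\leq\mu_{j_0}(1)/\alpha$ uniformly by the stability inequality. Alternatively, normalize the basis to be $L^2(p_n)$-orthonormal, as the paper does: then your key lemma transports the orthonormality to $L^2(p^*)$, and linear independence of the limits is automatic.
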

As already explained in the introduction, a precise theoretical characterization of maximizers to problem \eq{opt1} seems hard to reach at the current state of studies. For this reason, we concentrate our efforts in looking for weights increasing $\nu_1(p)$ or reducing $\mu_{j_0}(p)$, separately. We start by facing the problem
\begin{equation}\label{CP2}
\nu^{\alpha,\beta}_1:=\sup_{p \in P_{\alpha,\beta}} \, \nu_1(p)\,,
\end{equation}
where $\nu_1(p)$ is defined in \eqref{mu-caract} taking $j=1$. We call \textit{optimal pair} for \eqref{CP2} a couple $(\widehat{p},\widehat u)$ such that $\widehat p$ achieves the supremum in \eqref{CP2} and $\widehat u$ is an eigenfunction of $\nu_1(\widehat p)$. In the following we will always indicate with $\chi_D$ the characteristic function of a set $D\subset \R^2$.  In Section \ref{proof} we prove
\begin{theorem}\label{thm-nu}
Problem \eqref{CP2} admits an optimal pair $(\widehat{p},\widehat u) \in  P_{\alpha,\beta}\times H^2_\mathcal{O}$.
	Furthermore, $\widehat u$ and $\widehat{p}$ are related as follows
	\begin{equation*}\label{pnu}
	\widehat{p}(x,y) = \beta \chi_{ \widehat S} (x,y)+ \alpha \chi_{\Omega \setminus \widehat S}(x,y)\,\quad \text{for a.e. } (x,y)\in \Omega\,,
	\end{equation*}
	where $\widehat S = \{ (x,y)\in \Omega\,:\,\widehat u^2(x,y) \leq \widehat t \}$ for some $\widehat t> 0$ such that $|\widehat S|=\frac{1-\alpha}{\beta-\alpha}\,|\Omega|$.
\end{theorem}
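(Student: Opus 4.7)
The plan has three stages: existence via weak-$*$ compactness, a pointwise optimality condition extracted by perturbation, and identification of the bang-bang form via the bathtub principle.

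\emph{Existence.} The set $P_{\alpha,\beta}$ is weak-$*$ closed in $L^\infty(\Omega)$: the bounds $\alpha\leq p\leq\beta$, the mass normalisation $\int_\Omega p\,dx\,dy=|\Omega|$, and the $y$-symmetry all pass to weak-$*$ limits (the last by testing against $\phi(x,y)-\phi(x,-y)$ for arbitrary $\phi\in L^1$), so by Banach--Alaoglu the set is weak-$*$ compact. For each fixed $v\in H^2_\mathcal{O}\setminus\{0\}$, since $v^2\in L^1(\Omega)$, the Rayleigh quotient $p\mapsto \|v\|^2_{H^2_*}/\|\sqrt p\,v\|_2^2$ is continuous along weak-$*$ sequences; hence by \eqref{mu-caract} the functional $\nu_1(\cdot)$ is upper semi-continuous. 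A maximising sequence $p_n$ therefore admits a weak-$*$ cluster point $\widehat p\in P_{\alpha,\beta}$ with $\nu_1(\widehat p)\geq\limsup_n\nu_1(p_n)=\nu_1^{\alpha,\beta}$, so $\widehat p$ is optimal. An associated eigenfunction $\widehat u\in H^2_\mathcal{O}$ attaining the infimum defining $\nu_1(\widehat p)$ is furnished by Proposition \ref{general facts}, applied in the closed invariant subspace $H^2_\mathcal{O}$.

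\emph{Pointwise optimality.} Given $q\in P_{\alpha,\beta}$, the convex combination $p_t:=(1-t)\widehat p+tq\in P_{\alpha,\beta}$ satisfies $\nu_1(p_t)\leq\nu_1(\widehat p)$ for $t\in[0,1]$. Normalise every eigenfunction below to have unit $H^2_*$-norm, so that $\nu_1(p)=1/\int_\Omega p\,u^2$ for the associated extremal $u$. Since $p_t\to\widehat p$ in $L^\infty$, Rellich compactness applied to eigenfunctions $u_t$ for $\nu_1(p_t)$ gives, up to a subsequence, $u_t\to u_0$ in $L^2(\Omega)$; passing to the limit in the weak eigenvalue equation identifies $u_0$ as an eigenfunction for $\nu_1(\widehat p)$, with $\|u_0\|_{H^2_*}=1$. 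The Rayleigh characterisation of $\nu_1(\widehat p)$ applied to $u_t$ gives $\int_\Omega\widehat p\,u_t^2\leq 1/\nu_1(\widehat p)=\int_\Omega\widehat p\,u_0^2$, while $\nu_1(p_t)\leq\nu_1(\widehat p)$ rewrites as $\int_\Omega p_t\,u_t^2\geq\int_\Omega\widehat p\,u_0^2$. Expanding $\int p_t u_t^2=\int\widehat p\,u_t^2+t\int(q-\widehat p)u_t^2$ and combining these two inequalities,
$$
t\int_\Omega(q-\widehat p)\,u_t^2\,dx\,dy\;\geq\;\int_\Omega\widehat p\,u_0^2\,dx\,dy-\int_\Omega\widehat p\,u_t^2\,dx\,dy\;\geq\;0,
$$
so $\int_\Omega(q-\widehat p)\,u_t^2\geq 0$ for every $t>0$, and sending $t\to 0^+$ yields $\int_\Omega(q-\widehat p)\,u_0^2\geq 0$. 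Since the eigenspace of $\nu_1(\widehat p)$ is finite-dimensional, a selection/minimax argument over $q\in P_{\alpha,\beta}$ produces a \emph{single} eigenfunction, relabelled $\widehat u$, satisfying $\int_\Omega(q-\widehat p)\,\widehat u^2\,dx\,dy\geq 0$ for every $q\in P_{\alpha,\beta}$.

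\emph{Bang-bang form.} The previous inequality asserts that $\widehat p$ minimises the \emph{linear} functional $p\mapsto\int_\Omega p\,\widehat u^2$ over the convex set $P_{\alpha,\beta}$. By the bathtub principle, the minimiser takes the maximal value $\beta$ on a sub-level set of $\widehat u^2$ and the minimal value $\alpha$ on its complement; writing $\widehat S=\{\widehat u^2\leq\widehat t\}$ and $\widehat p=\beta\chi_{\widehat S}+\alpha\chi_{\Omega\setminus\widehat S}$, the mass identity $\beta|\widehat S|+\alpha(|\Omega|-|\widehat S|)=|\Omega|$ forces $|\widehat S|=(1-\alpha)|\Omega|/(\beta-\alpha)$. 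The required $y$-symmetry of $\widehat p$ is automatic since $\widehat u\in H^2_\mathcal{O}$ makes $\widehat u^2$ even in $y$; the strict positivity $\widehat t>0$ follows from $|\widehat S|>0$ together with the fact that a non-trivial fourth-order eigenfunction cannot vanish on a set of such measure. The main obstacle I anticipate is precisely the selection step in the previous paragraph: without knowing that $\nu_1(\widehat p)$ is simple, a naive eigenvalue derivative is unavailable, and producing a \emph{single} $\widehat u$ that works against every admissible $q$ requires combining the strong $L^2$-compactness of the perturbed eigenfunctions $u_t$ with a careful selection inside the finite-dimensional eigenspace of $\nu_1(\widehat p)$.
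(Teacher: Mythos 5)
Your existence argument is fine and in fact leaner than the paper's: for a supremum problem over a weak-$*$ compact set, upper semi-continuity of $\nu_1(\cdot)$ (as an infimum of weak-$*$ continuous maps) suffices, whereas the paper proves full continuity of every $\lambda_h(p)$ via the Auchmuty principle. The genuine gap is exactly where you flag it, and it is not closeable by the hand-waving ``selection/minimax argument'' you invoke. Your perturbation computation shows that for each fixed $q\in P_{\alpha,\beta}$ there is \emph{some} unit-norm eigenfunction $u_0=u_0(q)$ of $\nu_1(\widehat p)$ with $\int_\Omega(q-\widehat p)\,u_0(q)^2\ge 0$, but this $u_0$ depends on $q$ through the subsequence extracted from $u_t$. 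Naive compactness in the finite-dimensional eigenspace does not help, because the inequalities $\int(q_n-\widehat p)u_n^2\ge 0$ do not yield a uniform statement when both $q_n$ and $u_n$ vary; nor does a Sion-type minimax apply directly, since $u\mapsto\int(q-\widehat p)u^2$ is quadratic, not concave, on the sphere of the eigenspace. The paper sidesteps the issue altogether: it works with the Auchmuty dual functional $\mathcal{A}(p,u)=\tfrac12\|u\|^2_{H^2_*}-\|\sqrt p\,u\|_2$, establishes the minimax identity $\sup_p\inf_u\mathcal{A}=\inf_u\sup_p\mathcal{A}$ (Lemma \ref{infsup}, from Cuccu--Porru and Cox--McLaughlin), and extracts a genuine saddle point $(\widehat p,\widehat u)$ satisfying the uniform inequality \eqref{crucial} in one stroke (Lemma \ref{sella}). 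Without this or an equivalent device your argument does not reach the conclusion.

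Two further imprecisions in the final stage deserve attention. The bathtub principle shows that \emph{some} minimiser of $p\mapsto\int_\Omega p\,\widehat u^2$ over $P_{\alpha,\beta}$ is of bang-bang form, not that $\widehat p$ itself is; one must still show that equality of minimal values forces $\widehat p=p_{\widehat u}$ a.e., which the paper's Step~2 does by analysing the level set $A_{\widehat t}=\{\widehat u^2=\widehat t\}$ and ruling out $|A_{\widehat t}|>0$ via the Euler--Lagrange equation. Similarly, $\widehat t>0$ is not simply ``a nontrivial eigenfunction cannot vanish on a set of such measure'': the correct chain is that $\widehat t=0$ forces $\widehat p=\alpha$ a.e.\ on $\{\widehat u^2>0\}$, hence $\Delta^2\widehat u=\nu_1^{\alpha,\beta}\,\alpha\,\widehat u$ a.e., so $\widehat u$ must be one of the explicit eigenfunctions of Proposition \ref{eigenvalue}, whose zero set has measure zero, contradicting $|\widehat S|>0$.
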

Next we focus on longitudinal eigenvalues. For $j\in \N_+$, we set the minimum problem
\begin{equation}\label{CP}
\mu^{\alpha,\beta}_j :=\inf_{p \in P_{\alpha,\beta}} \, \mu_j(p)\,,
\end{equation}
where $\mu_j(p)$ is as defined in \eqref{mu-caract}.
We call \textit{optimal pair} for \eq{CP} a couple $(\overline{p}_j,{\overline{u}_j})$ such that $\overline{p}_j$ achieves the infimum in \eqref{CP} and $\overline{u}_j$ is an eigenfunction of $\mu_j(\overline{p}_j)$. When $j=1$ the counterpart of Theorem \ref{thm-nu} for problem \eqref{CP} is basically known from \cite{befafega} where the minimization issue for $\lambda_1(p)$, as defined in \eqref{first}, was dealt with. More precisely, the same proof of \cite[Theorem 3.2]{befafega} with minor changes yields the following statement:

\begin{proposition}\label{thm-mu1} \cite{befafega}
	Set $j=1$, then problem \eqref{CP} admits an optimal pair $(\overline{p}_1,{\overline{u}_1}) \in P_{\alpha,\beta}\times H^2_\mathcal{E}$.
	Furthermore, $\overline{u}_1 $ and $\overline p_1$ are related as follows
	\begin{equation*}\label{p1}
	\overline{p}_1(x,y) = \alpha \chi_{ S_1} (x,y)+ \beta \chi_{\Omega \setminus S_1}(x,y)\,\quad \text{for a.e. } (x,y)\in \Omega\,,
	\end{equation*}
	where $ S_1 = \{ (x,y)\in \Omega\,:\,\overline u_1^2(x,y) \leq t_1 \}$ for some $ t_1> 0$ such that $| S_1|=\frac{\beta-1}{\beta-\alpha}\,|\Omega|$.
\end{proposition}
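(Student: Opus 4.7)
The plan is to adapt the proof of \cite[Theorem 3.2]{befafega}, which treats the first eigenvalue $\lambda_1(p)$ as defined in \eqref{first}, to the variational characterization
\[
\mu_1(p)=\inf_{u\in H^2_\mathcal{E}\setminus\{0\}}\frac{\|u\|_{H^2_*}^2}{\|\sqrt{p}\,u\|_2^2}\,.
\]
The restriction to the even subspace is harmless because $H^2_\mathcal{E}$ is closed in $H^2_*$ and, by the $y$-symmetry of every $p\in P_{\alpha,\beta}$, it is invariant for the weighted eigenvalue problem; hence the same functional-analytic machinery applies within $H^2_\mathcal{E}$.

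For the existence part, I would take a minimizing sequence $(p_n)\subset P_{\alpha,\beta}$ with first longitudinal eigenfunctions $u_n\in H^2_\mathcal{E}$ normalized by $\|u_n\|_{H^2_*}=1$. By Banach--Alaoglu and the weak-$*$ closedness of $P_{\alpha,\beta}$ in $L^\infty$, up to a subsequence $p_n\rightharpoonup^*\overline{p}_1\in P_{\alpha,\beta}$. The uniform bound on $\|u_n\|_{H^2_*}$ together with the compact embedding of $H^2_*$ into $L^2$ (available since $\Omega\subset\R^2$) yields, along a further subsequence, $u_n\rightharpoonup\overline{u}_1$ in $H^2_*$ and $u_n\to\overline{u}_1$ in $L^2$; the limit lies in the closed subspace $H^2_\mathcal{E}$. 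Convergence of $\int_\Omega p_n u_n^2\,dxdy$ to $\int_\Omega \overline{p}_1\overline{u}_1^2\,dxdy$ follows from the decomposition
\[
\int_\Omega p_n u_n^2\,dxdy - \int_\Omega \overline{p}_1\overline{u}_1^2\,dxdy =\int_\Omega p_n(u_n^2-\overline{u}_1^2)\,dxdy+\int_\Omega(p_n-\overline{p}_1)\overline{u}_1^2\,dxdy,
\]
the first piece vanishing by strong $L^2$-convergence of $u_n$ and the $L^\infty$-bound $p_n\leq\beta$, the second by weak-$*$ convergence of $p_n$ against $\overline{u}_1^2\in L^1$. Using $\overline{u}_1$ as a test function in \eqref{mu-caract} combined with weak lower semicontinuity of $\|\cdot\|_{H^2_*}$ then shows $\mu_1(\overline{p}_1)$ attains the infimum and $\overline{u}_1$ is an associated eigenfunction.

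The structural identification rests on the standard double-minimization observation. For any $p\in P_{\alpha,\beta}$, plugging $\overline{u}_1$ into the quotient defining $\mu_1(p)$ and combining with $\mu_1(\overline{p}_1)\leq\mu_1(p)$ and the eigenvalue identity at the optimum gives
\[
\int_\Omega p\,\overline{u}_1^2\,dxdy\leq\int_\Omega \overline{p}_1\,\overline{u}_1^2\,dxdy\qquad\forall\,p\in P_{\alpha,\beta}\,.
\]
Therefore $\overline{p}_1$ maximizes the linear functional $p\mapsto\int_\Omega p\,\overline{u}_1^2\,dxdy$ over $P_{\alpha,\beta}$. By the bathtub principle, the maximizer has the bang-bang structure stated: choose $t_1\geq 0$ so that $|\{\overline{u}_1^2\leq t_1\}|=\frac{\beta-1}{\beta-\alpha}|\Omega|$ and set $\overline{p}_1=\alpha$ on $S_1=\{\overline{u}_1^2\leq t_1\}$, $\overline{p}_1=\beta$ on $\Omega\setminus S_1$. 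The $y$-symmetry of $\overline{p}_1$ is automatic since $\overline{u}_1\in H^2_\mathcal{E}$ makes $\overline{u}_1^2$ even in $y$.

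The main delicate point is the possibility that $\overline{u}_1^2$ is constant on a set of positive measure containing the critical level $t_1$: in that case the bathtub principle does not single out a unique maximizer, and one must redistribute mass inside $\{\overline{u}_1^2=t_1\}$ to enforce $\int_\Omega p=|\Omega|$ while preserving the two-value form of $\overline{p}_1$. This is handled exactly as in \cite[Theorem 3.2]{befafega}, at the cost of modifying $S_1$ by a measurable subset of the level set and using the $y$-symmetric character of that level set to keep $\overline{p}_1\in P_{\alpha,\beta}$.
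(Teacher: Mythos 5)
Your overall route is the right one and is essentially what the cited reference \cite{befafega} does: since \eqref{CP} with $j=1$ is an $\inf_p\inf_u$ problem, the Auchmuty/minimax machinery that the paper uses for the $\sup_p\inf_u$ problem \eqref{CP2} (Lemmas \ref{lemmaauchmuty}, \ref{infsup}, \ref{sella}) is not needed; the crucial inequality $\int_\Omega p\,\overline u_1^2\le\int_\Omega\overline p_1\,\overline u_1^2$ drops out directly from the chain of Rayleigh-quotient estimates you wrote, and the bathtub/rearrangement Lemma \ref{rearrangement} (its $M_{\alpha,\beta}$ half) then gives the bang-bang form. The existence argument via minimizing sequence, weak-$*$ compactness of $P_{\alpha,\beta}$ (Lemma \ref{compactness}) and compact embedding is also fine and is a slightly more hands-on substitute for the paper's continuity Lemma \ref{continuity}.

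The one place where you go astray is in your account of the ``delicate point.'' Knowing that $\overline p_1$ maximizes $p\mapsto\int_\Omega p\,\overline u_1^2$ over $P_{\alpha,\beta}$ only says that $\overline p_1$ \emph{attains the same value} as the bang-bang rearrangement $p_{\overline u_1}$ of Lemma \ref{rearrangement}; it does not yet say that $\overline p_1$ \emph{is} bang-bang. ``Redistributing mass inside $\{\overline u_1^2=t_1\}$'' would produce \emph{another} bang-bang maximizer, but the proposition asserts the optimal pair's weight itself has the stated form. What is actually needed (and what the paper does in Step 2 of the proof of Theorem \ref{thm-nu}, and what you must import from \cite{befafega}) is to show that the level set $A_{t_1}=\{\overline u_1^2=t_1\}$ has measure zero: on $A_{t_1}$ the pointwise Euler–Lagrange equation $\Delta^2\overline u_1=\mu_1^{\alpha,\beta}\,\overline p_1\,\overline u_1$ would force $0=\mu_1^{\alpha,\beta}\,\overline p_1\,t_1$, impossible since $\overline p_1\ge\alpha>0$. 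The same mechanism is what rules out $t_1=0$ (which your sketch leaves entirely unaddressed even though $t_1>0$ is part of the statement): if $t_1=0$ then $\overline p_1\,\overline u_1=\alpha\,\overline u_1$ a.e.\ and $\overline u_1$ would solve $\Delta^2\overline u_1=\alpha\mu_1^{\alpha,\beta}\,\overline u_1$ with the partially hinged conditions, hence be one of the explicit eigenfunctions of Proposition \ref{eigenvalue}, whose nodal set has zero measure—contradicting $|S_1|=\frac{\beta-1}{\beta-\alpha}|\Omega|>0$. With these two EL-based contradictions supplied, your argument closes.
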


Things become more involved for higher longitudinal eigenvalues. Indeed, the proofs of Theorem  \ref{thm-nu} and Proposition \ref{thm-mu1} are based on suitable rearrangement inequalities, see Lemma \ref{rearrangement} below, involving $\widehat p$ and $\overline{p}_1 $, respectively; this approach does not carry over to the case $j\geq 2$ since, in general, the orthogonality condition in the sets $V^\mathcal{E}_j$ of \eqref{mu-caract} is not preserved when changing weights. For this reason, we proceed differently and we lower $\mu_j(p)$ ``indirectly". More precisely, we first derive upper bounds for $\mu_j(p)$, where the eigenfunctions $\overline{u}_j$ are, in some sense, replaced by functions suitably chosen in $H^2_\mathcal{E}$; then we look for weights effective in lowering the upper bounds and, in turn, $\mu_j(p)$. We do not claim that this indirect approach will give the optimal density, however it suggests explicit weights effective in lowering higher eigenvalues and furthermore it provides a theoretical validation of the numerical results we collect in Section \ref{num1}.\par For $j\geq 2$ fixed and $m=1,\dots,j$, we introduce the following functions having disjoint supports  
\begin{equation}\label{sin2}
w_m(x,y):=
\begin{cases}
\sin^2(jx)\quad& \text{if }(x,y)\in\Omega^j_{m}\\
0\quad & \text{if }(x,y)\in\Omega \setminus\Omega^j_{m},
\end{cases}
\end{equation}
where $\Omega^j_{m}:=\bigg(\dfrac{(m-1)\pi}{j},\dfrac{m\pi}{j}\bigg)\times(-\ell, \ell)\subset \Omega$; it is readily checked that $w_m\in C^1(\overline \Omega)\cap  H^2_\mathcal{E}$ for all $m=1,...,j$. Then, we prove:
\begin{theorem}\label{thm-muj}
	Let $j\geq 2$ integer, then problem \eqref{CP} admits an optimal pair $ (\overline{p}_j,{\overline{u}_j}) \in P_{\alpha,\beta}\times  H^2_\mathcal{E}$
and there holds
	\begin{equation}\label{stimap}
	  \mu_j(p) \leq \inf_{p \in P_{\alpha,\beta}} \,  \left\{\max_{\substack{m=1,\dots,j}}\bigg\{\frac{1}{\|\sqrt{p}\, w_m\|^2_2}\bigg\}\right\}j^3|\Omega|\,.
	\end{equation}
	In particular, denoting by $P_{\alpha,\beta}^{per}:=\{p\in P_{\alpha,\beta}:p(x,y)=p\big(x+\frac{\pi}{j},y\big),\hspace{1mm}\text{for a.e. } (x,y)\in\Omega\}$, we have
	\begin{equation}\label{stimap1}
\mu_j(p)  \leq \inf_{p \in P^{per}_{\alpha,\beta}} \,  \left\{\frac{1}{\|\sqrt{p} \, \sin^2(jx) \|^2_2}\right\}j^4|\Omega|
	\end{equation}
	and the latter infimum is achieved by the functions
		\begin{equation*}\label{pm2}
	p_j(x,y) = \alpha \chi_{ S_j} (x,y)+ \beta \chi_{\Omega \setminus S_j}(x,y)\,\quad \text{for a.e. }(x,y)\in \Omega\,,
	\end{equation*}
	where $ S_j = \{ (x,y)\in \Omega\,:\, \sin^4(jx)  \leq t_j \}$ for $ t_j> 0$ such that $| S_j|=\frac{\beta-1}{\beta-\alpha}\,|\Omega|$.
\end{theorem}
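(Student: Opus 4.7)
For existence I follow the direct method. Given a minimizing sequence $\{p_n\}\subset P_{\alpha,\beta}$ with $\mu_j(p_n)\to\mu^{\alpha,\beta}_j$, weak--$*$ compactness in $L^\infty$ yields a subsequence $p_n\rightharpoonup^*\overline p_j\in P_{\alpha,\beta}$, since the pointwise bounds $\alpha\le p\le\beta$, the $y$--symmetry and the mass constraint all pass to the weak--$*$ limit. Upper semicontinuity of $\mu_j$ under such convergence, a consequence of the compact embedding $H^2_*\hookrightarrow L^2$, gives $\mu_j(\overline p_j)\le\mu^{\alpha,\beta}_j$, while the reverse inequality is immediate. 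An associated eigenfunction $\overline u_j\in H^2_\mathcal{E}$ is then provided by Proposition \ref{general facts}, exploiting the fact that the splitting $H^2_*=H^2_\mathcal{E}\oplus H^2_\mathcal{O}$ is preserved by the operator since $\overline p_j$ is $y$--even.

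To prove \eqref{stimap} I apply the Courant--Fischer characterisation of $\mu_j(p)$ as a min--max over $j$--dimensional subspaces of $H^2_\mathcal{E}$, which is legitimate by the same symmetry splitting. I test against $W=\mathrm{span}\{w_1,\dots,w_j\}$: the functions $w_m$ lie in $H^2_\mathcal{E}$ and are linearly independent because the $\Omega^j_m$ are pairwise disjoint. Disjointness of supports also kills every cross--term in both the $H^2_*$ scalar product and the weighted $L^2$ one, so for every $u=\sum_{m=1}^j c_m w_m\in W$
$$\frac{\|u\|_{H^2_*}^2}{\|\sqrt{p}\,u\|_2^2}=\frac{\sum_m c_m^2\|w_m\|_{H^2_*}^2}{\sum_m c_m^2\|\sqrt{p}\,w_m\|_2^2}\le\max_{1\le m\le j}\frac{\|w_m\|_{H^2_*}^2}{\|\sqrt{p}\,w_m\|_2^2}.$$
Since $w_m$ depends only on $x$, one has $(w_m)_{xy}=(w_m)_{yy}=0$ and the $H^2_*$--norm collapses to $\int(w_m)_{xx}^2\,dx\,dy$ with $(w_m)_{xx}=2j^2\cos(2jx)$ on $\Omega^j_m$, which evaluates explicitly to a constant multiple of $j^3|\Omega|$. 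Taking the infimum over $p\in P_{\alpha,\beta}$ yields \eqref{stimap}.

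The periodic bound \eqref{stimap1} follows from \eqref{stimap}. If $p(x+\pi/j,y)=p(x,y)$, the substitution $x'=x-(m-1)\pi/j$ together with the identity $\sin^4(jx'+(m-1)\pi)=\sin^4(jx')$ shows that $\|\sqrt{p}\,w_m\|_2^2$ is independent of $m$, and summing in $m$ gives $\|\sqrt{p}\,w_m\|_2^2=\tfrac{1}{j}\|\sqrt{p}\,\sin^2(jx)\|_2^2$; substituting into \eqref{stimap} produces \eqref{stimap1}. Minimising in $p$ is equivalent to maximising $\int_\Omega p\,\sin^4(jx)\,dx\,dy$ over $P^{per}_{\alpha,\beta}$, and a bathtub/rearrangement argument in the spirit of Lemma \ref{rearrangement}---used for Proposition \ref{thm-mu1} and Theorem \ref{thm-nu}---places $\beta$ on the superlevel sets of $\sin^4(jx)$ and $\alpha$ on its sublevel sets, with the threshold $t_j$ fixed by the mass constraint; the $\pi/j$--periodicity of $\sin^4(jx)$ guarantees that the resulting function $p_j$ automatically belongs to $P^{per}_{\alpha,\beta}$.

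The main obstacle I foresee is that, unlike for the first longitudinal eigenvalue treated in Proposition \ref{thm-mu1}, one cannot rearrange the optimal weight directly against $\overline u_j$ because the orthogonality condition defining $V^\mathcal{E}_j$ is not preserved under weight perturbations. The indirect strategy above circumvents this by replacing $\overline u_j$ with explicit piecewise test functions whose disjoint--support structure automatically reintroduces the factor $j$ expected in the $j$--th eigenvalue; this gives only an upper bound in general, but the bound is explicitly solvable by rearrangement once the weight class is restricted to $P^{per}_{\alpha,\beta}$, yielding the bang--bang function $p_j$ of the statement.
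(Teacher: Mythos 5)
Your argument tracks the paper's proof closely at every step: existence via weak--$*$ compactness of $P_{\alpha,\beta}$ and (semi)continuity of the eigenvalue map (the paper's Lemmas \ref{compactness} and \ref{continuity}), the min--max bound tested on $W=\mathrm{span}\{w_1,\dots,w_j\}$ exploiting disjoint supports and the explicit computation of $\|w_m\|_{H^2_*}^2$, the reduction to $\sin^2(jx)$ for $\pi/j$--periodic weights, and a rearrangement argument in the spirit of Lemma \ref{rearrangement} producing the bang--bang optimizer $p_j$, which is automatically $\pi/j$--periodic. Note only that for the direct method applied to an \emph{infimum} you need \emph{lower} (not ``upper'') semicontinuity of $p\mapsto\mu_j(p)$ along weak--$*$ convergent sequences; the paper in fact establishes full weak--$*$ continuity via the Auchmuty principle (Lemma \ref{lemmaauchmuty}), and this is the nontrivial ingredient that your phrase ``a consequence of the compact embedding'' glosses over.
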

\begin{remark}
	A comment on the choice of the functions $w_m$ is in order. The idea of taking functions $\pi/j$-periodic in the $x$-variable comes from the explicit form of the longitudinal eigenfunctions of Proposition \ref{eigenvalue}; slightly changes in the analytic expression of functions $w_m$ will qualitatively produce the same weights $p_j$, e.g. replacing $\sin^2(jx)$ with $\sin^{2n}(jx)$ ($n\geq 2$ integer) or $\exp\big[-1/(1-|x|^2)\big]$ properly rescaled and shifted in each $\Omega^j_m$. We underline that there is accordance between the optimal weights found numerically in Section \ref{num1} and the weights $p_j$ of Theorem \ref{thm-muj}.
\end{remark}

\begin{figure}[!hbt]
	\centering
	{\includegraphics[width=15cm]{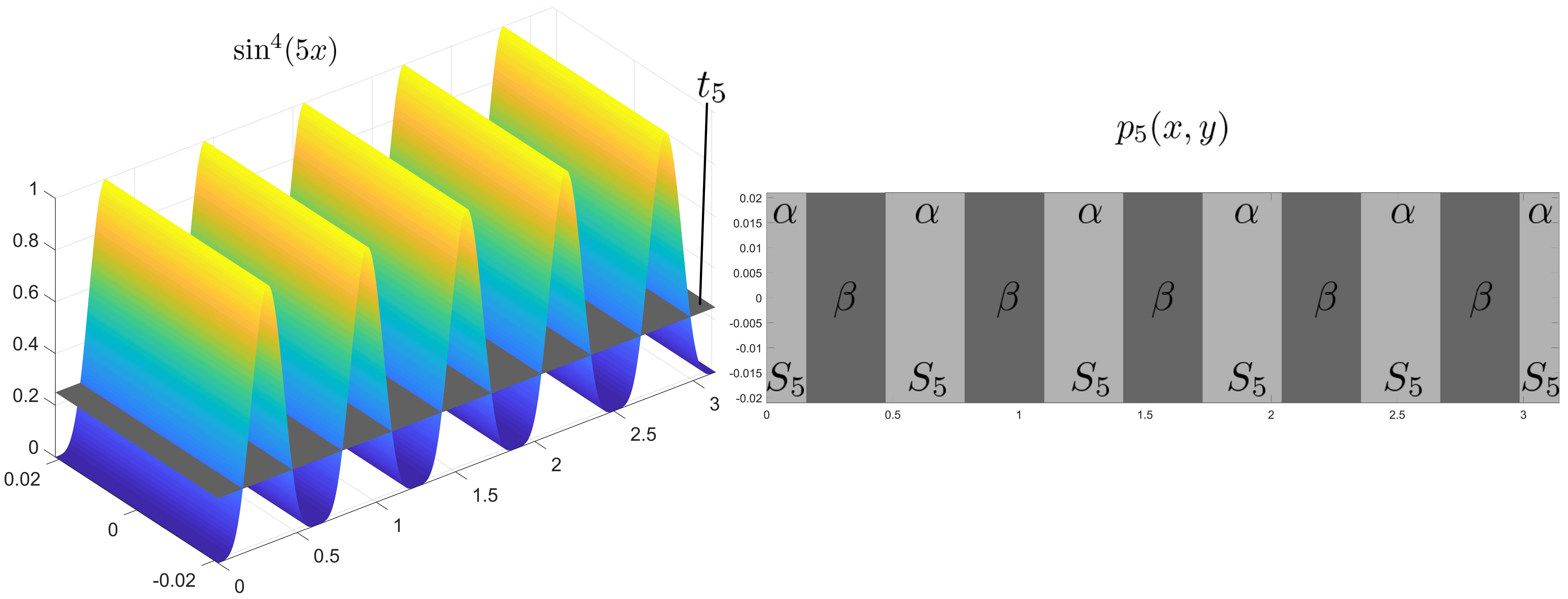}}
	\caption{Plots of $z=\sin^4(5x) $ intersected with the plane $z=t_5$ and  the correspondent set $S_5$, related to the weight $p_5(x,y)$, for a plate with $\ell=\pi/150$ ($\alpha=0.5$, $\beta=1.5$).}
	\label{fig0}
\end{figure}
We observe that, while the sets $\widehat S$ and $S_1$ of Theorem \ref{thm-nu} and Proposition \ref{thm-mu1} depend on the unknown functions $\widehat u$ and $\overline u_1$, the set $S_j$ of Theorem \ref{thm-muj} is explicitly given once determined $t_j>0$. As a matter of example, in Figure \ref{fig0} we plot the function $z=\sin^4(5x)$, the corresponding set $S_5$ and the related weight $p_5(x,y)$. It is worth noting that the statement of Theorem \ref{thm-nu} combines nicely with those of  Proposition \ref{thm-mu1} and Theorem \ref{thm-muj} in increasing the ratio in \eqref{opt1}. This is highlighted by the numerical experiments we collect in Section \ref{num2}.

\section{Numerical results}\label{num2}
In the previous section we proved that an optimal weight maximizing the ratio $\nu_1(p)/\mu_{j_0}(p)$, with $j_0$ defined in \eqref{j0}, exists. Then, in order to find information on its analytic expression, we decided to minimize $\mu_{j_0}(p)$ or maximize $\nu_1(p)$, separately. All the theoretical results obtained tell that the optimal weights $p\in P_{\alpha,\beta}$, either for problem \eq{CP2} and \eq{CP}, must be of \textit{bang-bang} type, i.e.
$$
p(x,y) = \alpha \chi_{S} (x,y)+ \beta \chi_{\Omega \setminus S}(x,y)\,\quad \text{for a.e. } (x,y)\in \Omega\,,
$$
for a suitable set $S\subset \Omega$. In other words, the plate must be composed by two different materials properly located in $\Omega$; this is useful in engineering terms, since the assemblage of two materials with constant density is simpler than the manufacturing of a material having variable density. Unfortunately, Theorem \ref{thm-nu} and Proposition \ref{thm-mu1} give no precise information on the location of the set $S$; nevertheless, through suitable numerical experiments, we are able to suggest what could be the optimal design of the set $S$, in problems \eq{CP2} and \eq{CP}, and to guess a possible maximizer to problem \eq{opt1}.
For the applicative purpose we may strengthen the plate with steel and  we may consider the other material composed by a mixture of steel and concrete; following this approach, the denser material has approximately triple density with respect to the weaken, i.e. $\beta=3\alpha$.
\subsection{Eigenvalues computation}
We propose a numerical method to find approximate solutions of \eqref{weight} which relies on the explicit information we have from Proposition \ref{eigenvalue} ($p\equiv1$).
We expand the solutions $u$ of \eqref{weight} in Fourier series, adopting the orthonormal basis of $L^2$ given by eigenfunctions of the homogeneous plate. More precisely, denoting by $z_m(x,y)\in H^2_\mathcal{E}$ and $\theta_m(x,y)\in H^2_\mathcal{O}$, respectively, the (ordered) longitudinal and torsional eigenfunctions of problem \eqref{weight} with $p\equiv1$, $u$ writes
\begin{equation}\label{fourier}
u(x,y)=\sum_{m=1}^{\infty}\left[a_{m}z_{m}(x,y)+b_{m}\theta_{m}(x,y)\right]\,,
\end{equation}
for suitable $a_m,b_m \in \R$.
In order to get a numerical approximation, we trunk the series in \eqref{fourier} at $N \in \N_{+}$ and we
plug the Fourier sum into \eqref{eigenweak}. We recall that, for all $m\in \mathbb{N}_+$, $z_{m}$ and $\theta_{m}$ solve:
\begin{equation*}\label{base1d}
\begin{split}
(z_{m},v)_{H^2_*}=\mu_{m}(1)\, (z_{m}, v)_{L^2} \quad &\forall v \in H^2_*\,\\
(\theta_{m},v)_{H^2_*}=\nu_{m}(1)\,(\theta_{m}, v)_{L^2} \quad &\forall v \in H^2_*\,,
\end{split}
\end{equation*}
where $\mu_{m}(1)$ and $\nu_{m}(1)$ are defined in \eq{mu-caract} with $p\equiv 1$.
Therefore, we obtain the following finite dimensional linear system in the unknowns $a_{n}$ and $b_{n}$:
\begin{equation}\label{weightnum3}
\begin{cases}
a_{n}\mu_{n}(1)=\mu(p) \sum\limits_{m=1}^{N}\, a_{m}\,C^p_{n,m}\\
b_{n}\nu_{n}(1)=\nu(p) \sum\limits_{m=1}^{N}\, b_{m}\,\overline{C}^p_{n,m},
\end{cases}
\end{equation}
\begin{flushright}
	for $n=1,\dots,N$
\end{flushright}
where
\begin{equation*}
\begin{split}
&C^p_{n,m}:=\int_{\Omega}p(x,y)\, z_{n}(x,y)\,z_m(x,y) \,dxdy\\&
\overline C^p_{n,m}:=\int_{\Omega}p(x,y)\, \theta_{n}(x,y)\,\theta_m(x,y) \,dxdy .
\end{split}
\end{equation*}
In particular, by solving \eq{weightnum3}, it is possible to determine $N$ approximated longitudinal eigenvalues $\mu_n(p)$ and $N$ torsional eigenvalues $\nu_n(p)$. We observe that the decoupling between the unknowns $a_{n}$ and $b_{n}$, which produces eigenfunctions even or odd in $y$, is due to the assumption on $p\in P_{\alpha,\beta}$,  being $y$-even.\par 
In order to compute numerically the eigenvalues $\mu_n(p)$ and $\nu_n(p)$ for suitable choices of the weight $p$, we fix from now onward:
\begin{equation}\label{parameter}
\sigma=0.2\qquad \text{and}\qquad\ell=\dfrac{\pi}{150},
\end{equation}
which is a choice consistent with common bridge design. The explicit values of $\mu_n(1)$ and $\nu_n(1)$ are computed by exploiting Proposition  \ref{eigenvalue}, see Table \ref{eig num}. When condition \eq{parameter} holds, we numerically find that the eigenvalues $\Lambda^{m,1}$ do not exist for $1\leq m\leq 2734$ and that
\begin{equation*}\label{modi}
\begin{split}
\mu_{m}(1)=\Lambda_{m,1} \quad &{\rm for}\quad m=1,\dots,113\\
\nu_{m}(1)=\Lambda^{m,2} \quad &{\rm for}\quad m=1,\dots,174\,.
\end{split}
\end{equation*}
Hence, by Proposition \ref{eigenvalue} we know that the basis of eigenfunctions exploited in \eqref{fourier} writes $z_m(x,y)=\phi_{m,1}(y)\sin(mx)$ and $\theta_m(x,y)=\psi_{m,2}(y)\sin(mx)$ for $1\leq m\leq 113$.

\begin{table}[h]\centering
	\scalebox{0.9}{	\begin{tabular}{|c||c|}
			\hline
			&$\mu_{m}(1)=\Lambda_{m,1}$\\
			\hline
			\hline
			$m=1$&9.60$\cdot 10^{-1}$\\
			$m=2$&1.54$\cdot 10^1$\\
			$m=3$&7.78$\cdot 10^1$\\
			$m=4$&2.46$\cdot 10^2$\\
			$m=5$&6.00$\cdot 10^2$\\
			$m=6$&1.24$\cdot 10^3$\\
			$m=7$&2.31$\cdot 10^3$\\
			$m=8$&3.93$\cdot 10^3$\\
			$m=9$&6.30$\cdot 10^3$\\
			$m=10$&9.61$\cdot 10^3$\\
			$m=11$&1.41$\cdot 10^4$\\
			$m=12$&1.99$\cdot 10^4$\\
			\hline
	\end{tabular}}\qquad\quad
	\scalebox{0.9}{	\begin{tabular}{|c||c|}
			\hline
			&$\nu_m(1)=\Lambda^{m,2}$\\
			\hline
			\hline
			$m=1$&1.09$\cdot 10^4$\\
			$m=2$&4.38$\cdot 10^4$\\
			$m=3$&9.86$\cdot 10^4$\\
			$m=4$&1.75$\cdot 10^5$\\
			$m=5$&2.74$\cdot 10^5$\\
			$m=6$&3.95$\cdot 10^5$\\
			$m=7$&5.38$\cdot 10^5$\\
			$m=8$&7.04$\cdot 10^5$\\
			$m=9$&8.93$\cdot 10^5$\\
			$m=10$&1.10$\cdot 10^6$\\
			$m=11$&1.34$\cdot 10^6$\\
			$m=12$&1.60$\cdot 10^6$\\
			\hline
	\end{tabular}}
	\vspace{3mm}
	\caption{On the left the lowest longitudinal eigenvalues $\mu_m(1)$ and on the right the lowest torsional eigenvalues $\nu_m(1)$ of \eqref{weight} with $p\equiv1$.}
	\label{eig num}
\end{table}

\subsection{Numerical solution of \eq{CP}}\label{num1} Let us begin by minimizing $\mu_1(p)$, i.e. the first longitudinal eigenvalue of problem \eqref{weight}, as characterized in \eqref{mu-caract} with $j=1$. In order to find the optimal weight given by Proposition \ref{thm-mu1}, we adopt a numerical algorithm proposed in \cite{chen} that we shortly illustrate in the following.  First we solve numerically \eqref{weightnum3} with a given weight $p^{(i)}$ and we determine the corresponding eigenfunction $u_1^{(i)}$. Then, we choose a weight at the next iteration $p^{(i+1)}$ such that
\begin{equation*}\label{algoritmo}
\|\sqrt{p^{(i+1)}}u_1^{(i)}\|_2^2\geq \|\sqrt{p^{(i)}}u_1^{(i)}\|_2^2,
\end{equation*}
in order to have
\begin{equation*}
\mu^{(i+1)}_1= \min_{\substack{u \in H^2_\mathcal{E}\setminus\{0\}}}
\frac{\|u\|_{H^2_*}^2}{\|\sqrt{p^{(i+1)}}\,u\|_{2}^2}=\frac{\|u_1^{(i+1)}\|_{H^2_*}^2}{\|\sqrt{p^{(i+1)}}\,u_1^{(i+1)}\|_{2}^2}\leq \frac{\|u_1^{(i)}\|_{H^2_*}^2}{\|\sqrt{p^{(i+1)}}\,u_1^{(i)}\|_{2}^2}\leq \frac{\|u_1^{(i)}\|_{H^2_*}^2}{\|\sqrt{p^{(i)}}\,u_1^{(i)}\|_{2}^2}=\mu^{(i)}_1.
\end{equation*}
Note that to select $p^{(i+1)}$ we exploited the rearrangement Lemma \ref{rearrangement} below.
Iterating, we obtain a decreasing sequence of eigenvalues; since the infimum in \eqref{CP} is achieved, the sequence is bounded from below by  $\mu_1^{\alpha, \beta}$ so that it is convergent. We stop the algorithm when $|\mu_1^{(i+1)}-\mu_1^{(i)}|<\epsilon$, with $\epsilon=10^{-4}\div10^{-3}$. As pointed out in \cite{chanillo} it is not clear a priori if the sequence converges to $\mu_1^{\alpha,\beta}$ or not; to avoid the latter case we repeated the procedure considering different weights at the first iteration and we always obtain the convergence to the same values.\par In Figure \ref{fig1} we plot the set $S_1$ defined in Proposition \ref{thm-mu1} for the eigenfunction $\overline u_1$ of the obtained numerical optimal pair; clearly, the direction is to concentrate the denser material near the maximum of $\overline u_1^2$. Since the set $\Omega\setminus S_1$ is similar to a rectangle, we propose the following analytic expression of the \textbf{approximated optimal weight }for $\mu_1^{\alpha, \beta}$:  

\begin{equation*}\label{p11}
\overline p_1(x,y)=\overline p_1(x):=\beta \chi_{I_1}(x)+\alpha \chi_{(0,\pi)\setminus I_1}(x)\,\quad \text{for a.e. } (x,y)\in \Omega\,,
\end{equation*}
where $I_1:=\left(\frac{\pi}{2}-\frac{\pi}{2}\frac{(1-\alpha)}{(\beta-\alpha)},\,\frac{\pi}{2}+\frac{\pi}{2}\frac{(1-\alpha)}{(\beta-\alpha)} \right)$.

\begin{figure}[!hbt]
	\centering
	{\includegraphics[width=15cm]{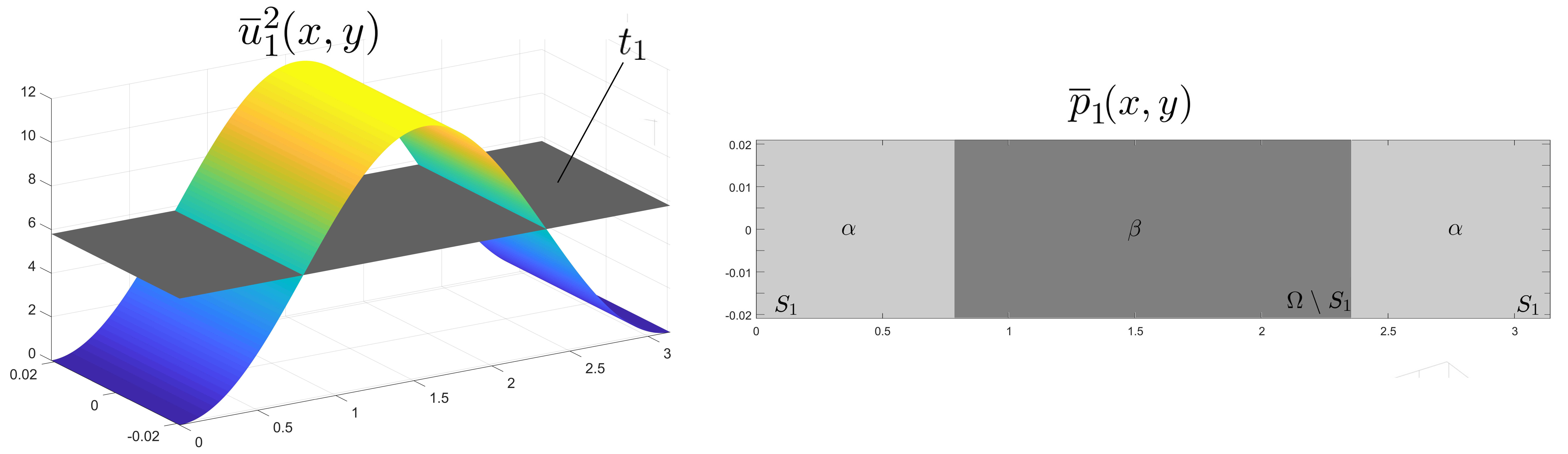}}
	\caption{Plot of $z=\overline u_1^2(x,y)$ intersected with the plane $z=t_1$ and plot of the related set $S_1$ ($\alpha=0.5$, $\beta=1.5$, $N=20$).}
	\label{fig1}
\end{figure}

The previous algorithm can be adapted to determine $\mu_j(\overline p_j)$ for generic $j\in\mathbb{N}_+$; in particular, if $j>1$ we apply the characterization \eqref{caract2} of eigenvalues, i.e. we consider the minimum onto the space $V^\mathcal{E}_j$ instead of $H^2_\mathcal{E}$. The further difficulty is that, now, at the end of every iteration, we have to check that $u_j^{(i)}(x,y)$ does not belong to the subspace spanned by $\{u_1^{(i+1)},\dots, u_{j-1}^{(i+1)}\}$, for more details see \cite{chen}.  For each $j\in\mathbb{N}_+$, the obtained optimal weight has the denser material concentrated near to the peaks of the associated eigenfunction which are, approximatively, located at $\frac{\pi}{2j}(2h-1)$ with $h=1,...,j$; this is aligned with the statement of Theorem \ref{thm-muj}. Therefore, we propose the following \textbf{approximated optimal weight }for $\mu_j^{\alpha, \beta}$:
\begin{equation}\label{pj}
\overline p_j(x,y)=\overline p_j(x):=\beta \chi_{I_j}(x)+\alpha \chi_{(0,\pi)\setminus I_j}(x)\\,\quad \text{for a.e. } (x,y)\in \Omega\,,
\end{equation}
where $I_j:=\displaystyle{\bigcup_{h=1}^j}\bigg(\frac{\pi}{2j}(2h-1)-\frac{\pi}{j} \frac{(1-\alpha)}{2(\beta-\alpha)},\,\frac{\pi}{2j}(2h-1)+\frac{\pi}{j} \frac{(1-\alpha)}{2(\beta-\alpha)}\bigg)$.\par
The above results show that the optimal weight changes if we change $j$. Nevertheless, numerically, we observe that the weight $\overline p_{j}(x)$ in \eqref{pj} reduces not only $\mu_{j}(\overline p_{j})$, but also all the previous longitudinal eigenvalues $\mu_i(\overline p_{j})$ with $1\leq i<j$; while it increases $\mu_i(\overline p_{j})$ with $i>j$. This means that, if it were possible to predict the highest mode of vibration for a plate during its design, then there would be an optimal reinforce for it, reducing at the same time all the previous ones.

\subsection{Numerical solution to \eq{CP2}} \label{sub2}
About the maximization of the first torsional eigenvalue, we cannot adapt the algorithm in \cite{chen} that only works for infimum problems.
Nevertheless, Theorem \ref{thm-nu} suggests to put the denser material in the region $\widehat S = \{ (x,y)\in \Omega\,:\,\widehat u^2(x,y) \leq \widehat t \}$ for some $\widehat t> 0$, where $\widehat{u}$ is the eigenfunction corresponding to $\nu_1(\widehat{p})$. Since we do not know explicitly $\widehat u$, we proceed by trial and error; we start by replacing $\widehat u$ with the first torsional eigenfunction $\theta_1(x,y)=\psi_{1,2}(y)\sin(x)$ of problem \eqref{weight} with $p\equiv1$ and we define the weight $p^*(x,y):= \beta \chi_{S^*}(x,y)+ \alpha \chi_{\Omega \setminus S^*}(x,y)$ where $S^* := \{ (x,y)\in \Omega\,:\, \theta_1^2(x,y) \leq t^* \}$, for $t^*> 0$ such that $| S^*|=\frac{1-\alpha}{\beta-\alpha}\,|\Omega|$.
Then, we proceed by solving \eqref{weight} with $p^*$, obtaining a new first torsional eigenfunction $u^*$ to which we associate, as done for $\theta_1$, a new weight $p^{**}$ of bang-bang type; iterating the procedure, we observe that the obtained weights are always very close to $p^{*}$, so that we conjecture that the theoretical optimal weight $\widehat{p}$ of Theorem \ref{thm-nu} is qualitatively very similar to $p^*$. In Figure \ref{fig2} we plot $S^*$ and in Table \ref{tab2} we give the corresponding eigenvalues. 
\begin{figure}[!hbt]
	\centering
	{\includegraphics[width=15cm]{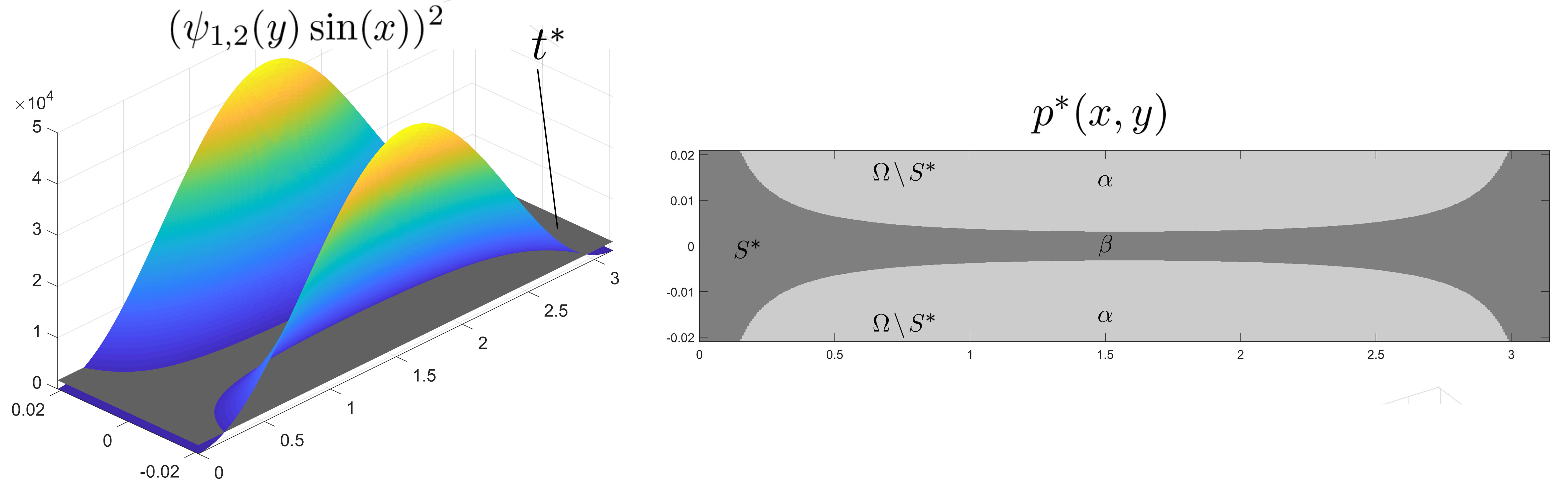}}
	\caption{Plot of $z=\theta_1^2(x,y)= (\psi_{1,2}(y)\sin(x))^2$ intersected with the plane $z=t^*$ and plot of the related set $S^*$ ($\alpha=0.5$, $\beta=1.5$).}
	\label{fig2}
\end{figure}

In order to find a reinforce more suitable for practical reproduction, inspired by Figure \ref{fig2}, we consider in our experiments a second weight depending only on $y$ and concentrated around the mid-line $y=0$, i.e. 
\begin{equation*}
\breve p(x,y)=\breve p(y):=\beta \chi_{\breve I}(y)+\alpha \chi_{(-\ell,\ell)\setminus \breve I}(y)\,\quad \text{for a.e. } (x,y)\in \Omega\,,
\end{equation*}
where $\breve I:=\big(-\frac{\ell(\beta-1)}{\beta-\alpha},\frac{\ell(\beta-1)}{\beta-\alpha}\big)$. Clearly, this choice produces some simplifications in the problem and the coefficients in \eqref{weightnum3} become simpler, see also \cite[Section 4]{befafega}. The obtained eigenvalues are again collected in Table \ref{tab2}. \par  Since the weight $\breve p(x,y)$ increases $\nu_1(p)$ less than $p^*(x,y)$, we keep as \textbf{approximated optimal weight }for $\nu_1^{\alpha, \beta}$:
$$
p^*(x,y)= \beta \chi_{S^*}(x,y)+ \alpha \chi_{\Omega \setminus S^*}(x,y) \,\quad \text{for a.e. } (x,y)\in \Omega\,,
$$
where $S^* = \{ (x,y)\in \Omega\,:\, (\theta_{1})^2(x,y) \leq t^* \}$ for $t^*> 0$ such that $|S^*|=\frac{1-\alpha}{\beta-\alpha}|\Omega|$.

\subsection{Numerical solution of \eq{opt1}}
\begin{table}[h]\centering
	\scalebox{0.9}{	\begin{tabular}{|c||c|c|c|c|c|c|}
			\hline
			&$p\equiv1$&$\overline p_{10}(x)$&$p^*(x,y)$&$\breve p(y)$&$\overline{\overline p}(x)$&$\widetilde{p}(x,y)$\\
			&\includegraphics[scale=0.22]{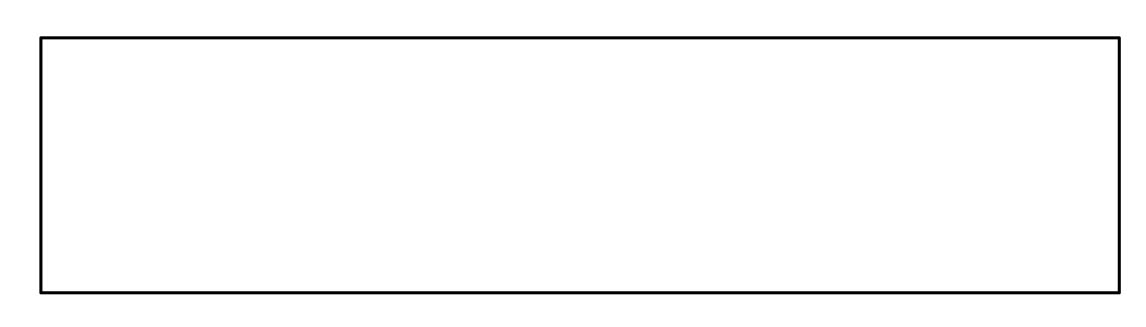}&\includegraphics[scale=0.22]{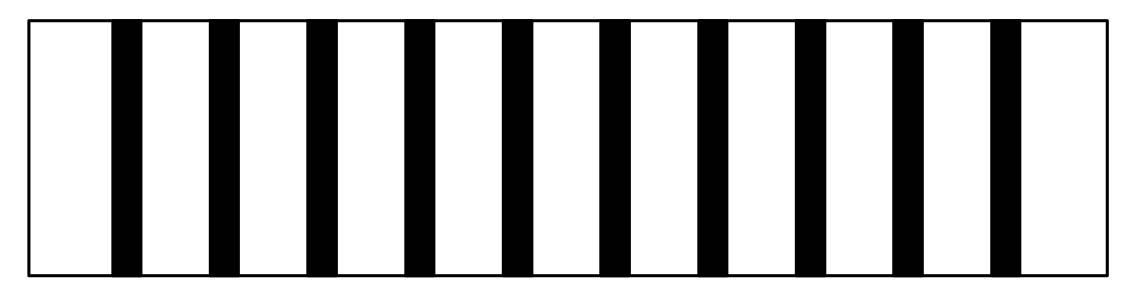}&\includegraphics[scale=0.22]{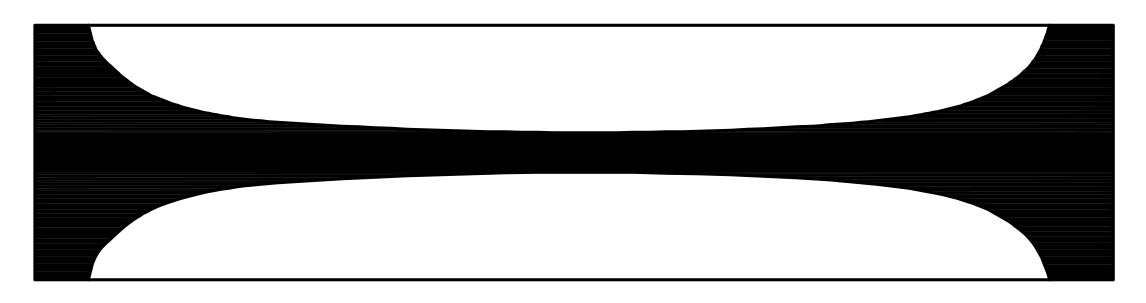}&\includegraphics[scale=0.22]{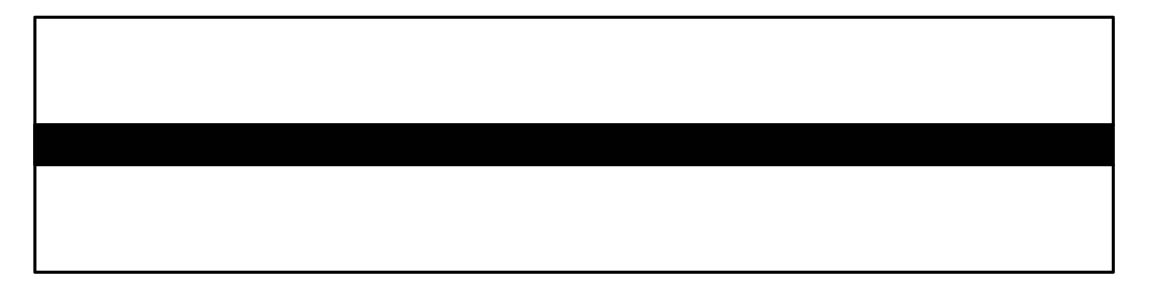}&\includegraphics[scale=0.22]{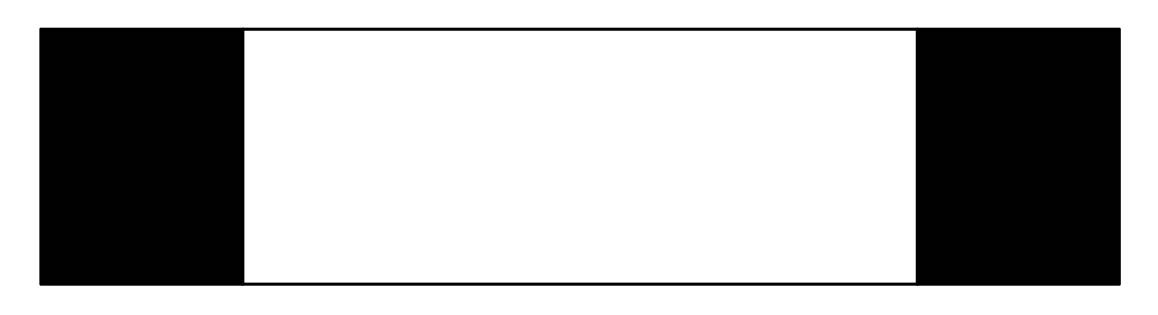}&\includegraphics[scale=0.22]{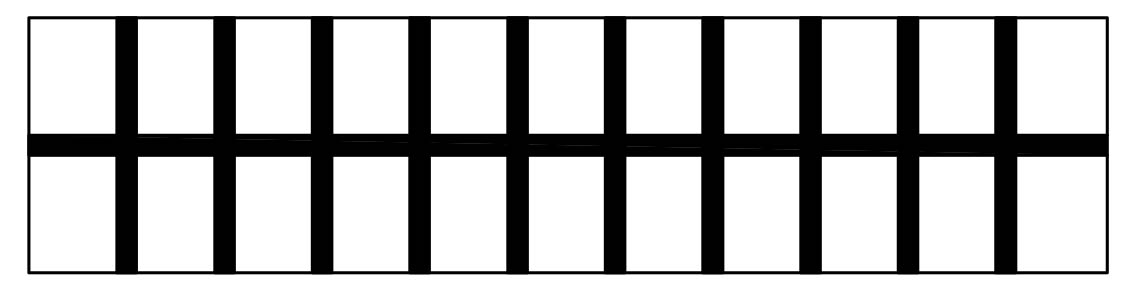}\\
			\hline
			\hline
			$\mu_1(p)$&9.60$\cdot 10^{-1}$&9.60$\cdot 10^{-1}$&1.16&9.60$\cdot 10^{-1}$&1.40&9.86$\cdot 10^{-1}$\\
			$\mu_2(p)$&1.54$\cdot 10^1$&1.54$\cdot 10^1$&1.66$\cdot 10^1$&1.54$\cdot 10^1$&1.52$\cdot 10^1$&1.58$\cdot 10^1$\\
			$\mu_3(p)$&7.78$\cdot 10^1$&7.77$\cdot 10^1$&8.06$\cdot 10^1$&7.78$\cdot 10^1$&8.05$\cdot 10^1$&7.98$\cdot 10^1$\\
			$\mu_4(p)$&2.46$\cdot 10^2$&2.46$\cdot 10^2$&2.51$\cdot 10^2$&2.46$\cdot 10^2$&2.96$\cdot 10^2$&2.52$\cdot 10^2$\\
			$\mu_5(p)$&6.00$\cdot 10^2$&5.99$\cdot 10^2$&6.10$\cdot 10^2$&6.01$\cdot 10^2$&6.78$\cdot 10^2$&6.16$\cdot 10^2$\\
			$\mu_6(p)$&1.24$\cdot 10^3$&1.24$\cdot 10^3$&1.27$\cdot 10^3$&1.25$\cdot 10^3$&1.31$\cdot 10^3$&1.28$\cdot 10^3$\\
			$\mu_7(p)$&2.31$\cdot 10^3$&2.28$\cdot 10^3$&2.36$\cdot 10^3$&2.31$\cdot 10^3$&2.60$\cdot 10^3$&2.37$\cdot 10^3$\\
			$\mu_8(p)$&3.93$\cdot 10^3$&3.84$\cdot 10^3$&4.04$\cdot 10^3$&3.94$\cdot 10^3$&4.55$\cdot 10^3$&4.04$\cdot 10^3$\\
			$\mu_9(p)$&6.30$\cdot 10^3$&5.87$\cdot 10^3$&6.48$\cdot 10^3$&6.31$\cdot 10^3$&6.85$\cdot 10^3$&6.47$\cdot 10^3$\\
			$\mu_{10}(p)$&9.61$\cdot 10^3$&7.28$\cdot 10^3$&9.90$\cdot 10^3$&9.62$\cdot 10^3$&1.04$\cdot 10^4$&9.55$\cdot 10^3$\\
			$\mu_{11}(p)$&1.41$\cdot 10^4$&1.68$\cdot 10^4$&1.45$\cdot 10^4$&1.41$\cdot 10^4$&1.61$\cdot 10^4$&1.45$\cdot 10^4$\\
			$\mu_{12}(p)$&1.99$\cdot 10^4$&2.27$\cdot 10^4$&2.05$\cdot 10^4$&2.00$\cdot 10^4$&2.24$\cdot 10^4$&2.05$\cdot 10^4$\\
			\hline
			$\nu_1(p)$&1.09$\cdot 10^4$&1.09$\cdot 10^4$&1.98$\cdot 10^4$&1.75$\cdot 10^4$&1.56$\cdot 10^4$&1.71$\cdot 10^4$\\
			$\nu_2(p)$&4.38$\cdot 10^4$&4.37$\cdot 10^4$&6.88$\cdot 10^4$&7.01$\cdot 10^4$&4.14$\cdot 10^4$&6.84$\cdot 10^4$\\
			\hline
			$\mathcal{R}$&\textbf{1.14}&\textbf{1.50}&\textbf{2.00}&\textbf{1.82}&\textbf{1.49}&\textbf{1.79}\\
			\hline
	\end{tabular}} 
	
	\vspace{3mm}
	\caption{The lowest longitudinal eigenvalues $\mu_{j}(p)$ with $j=1,\dots,12$, the first two torsional $\nu_{i}(p)$ with $i=1,2$ and the ratio $\mathcal{R}=\frac{\nu_1(p)}{\mu_{10}(p)}$ of \eqref{weightnum3} with different weights, assuming \eqref{parameter}, $\alpha=0.5$, $\beta=1.5$ and $N=30$.}
	\label{tab2}
\end{table} 
From the eigenvalues in Table \ref{eig num} we infer $$\mu_1(1)<...<\mu_{10}(1)<\nu_{1}(1)<\mu_{11}(1)<...$$
This is the reason why, we fix $j_0=10$ in \eqref{j0} and we focus on the ratio $\mathcal{R}$ between $\nu_1(p)$ and $\mu_{10}(p)$. In order to increase $\mathcal{R}$ we test weights raising $\nu_1(p)$ and lowering $\mu_{10}(p)$. \par
First we consider the optimal weight $\overline p_{10}(x)$ as defined in \eqref{pj}. As we can see from Table \ref{tab2}, it has a limited effect on the variation of $\nu_1(p)$, so that it makes sense to minimize $\mu_{10}(p)$ in order to increase the ratio \eqref{opt1}. 
\par
Next we consider weights having strong effects on $\nu_1(p)$, such as the weights $p^*(x,y)$ and $\breve p(y)$ defined in Section \ref{sub2}. Table \ref{tab2} highlights that they have a confined effect on longitudinal eigenvalues. Moreover, they increase the ratio $\mathcal{R}$ much more than the weights optimal for the longitudinal modes.\par
We complete the numerical experiments by testing other weights which seem to be reasonable in order to increase $\mathcal{R}$; more precisely, we consider a weight concentrated near the short edges of the plate:
$$\overline{\overline p}(x,y)=\overline{\overline p}(x):=\alpha \chi_{I}(x)+\beta \chi_{(0,\pi)\setminus I}(x) \,\quad \text{for a.e. } (x,y)\in \Omega\,,$$
where $I:=\big(\frac{\pi}{2}-\frac{\pi(\beta-1)}{2(\beta-\alpha)},\frac{\pi}{2}+\frac{\pi(\beta-1)}{2(\beta-\alpha)}\big)$, and the cross-type weight $\widetilde{p}(x,y)$, given in the last column of Table \ref{tab2}, which is obtained by combining $\overline p_{10}(x)$ and $\breve p(y)$. From Table \ref{tab2} we observe that these weights have effects both on torsional and on longitudinal eigenvalues, so that they do not seem optimal for the ratio $\mathcal{R}$.\par
Summing up, since $p^*(x,y)$ increases the ratio $\mathcal{R}$ more than all the other considered weights, we propose as \textbf{approximated optimal weight} for $\mathcal{R}$:
$$
p^*(x,y)= \beta \chi_{S^*}(x,y)+ \alpha \chi_{\Omega \setminus S^*}(x,y) \,\quad \text{for a.e. } (x,y)\in \Omega\,,
$$
where $S^* = \{ (x,y)\in \Omega\,:\, \theta_1^2(x,y) \leq t^* \}$ for $t^*> 0$ such that $|S^*|=\frac{1-\alpha}{\beta-\alpha}|\Omega|$, cfr. Figure \ref{fig2}.\par
Although the present work is focused on the first torsional eigenvalue, in Table \ref{tab2} we also collect the results obtained for the second torsional eigenvalue $\nu_2(p)$. We observe that the variation of $\nu_2(p)$ follows the same trend of that of $\nu_1(p)$ with respect to the considered weights, 
 hence we may conjecture that the same reinforcement could be adopted to optimize ratios involving subsequent (low) torsional eigenvalues.\par

\section{Proofs}\label{proof}
In what follows we will always assume that
$$
0<\sigma<\frac{1}{2}\quad\text{and}\quad \alpha<1<\beta \quad (\alpha,\beta\in (0,+\infty))
$$ 
and the family $P_{\alpha,\beta}$ is as defined in \eqref{eq:famiglia}.

%
\subsection{Proof of Theorem \ref{existence}}
The proof follows by combining three lemmas that we state here below. In the first part of this section we will not need to distinguish between longitudinal and torsional eigenvalues.\par  
Given $p\in P_{\alpha, \beta}$, it is convenient to endow the space $L^2$ of the weighted scalar product: $(p\, u,v)_{L^2}$, for all $u,v\in L^2$, which defines an equivalent norm in $L^2$. Then, for $h\in N_+$, we introduce the orthogonal projection of $u \in H^2_*$, with respect to the above weighted scalar product, onto the space generated by the first $(h-1)$ eigenfunctions $u_1,\dots, u_{h-1}$ of problem \eqref{weight}: 
 $$P_{h-1}(p)u:=\sum_{i=1}^{h-1}(p\, u,u_i)_{L^2}\, u_i\,;$$ 
when $h=1$ we adopt the convection $P_0(p)u =0$. Finally, we recall the Auchmuty's principle \cite{auchmuty} stated in our framework:
\begin{lemma}\label{lemmaauchmuty}
	Let $p\in P_{\alpha, \beta}$ and $\lambda_h(p)$ the $h-$th eigenvalue of \eqref{weight} with $h\in N_+$, then 
	$$
	-\dfrac{1}{2\lambda_h(p)} =\inf_{u \in H^2_*}\mathcal{A}_h(p,u)\quad \text{where}\quad \mathcal{A}_h(p,u):=\dfrac{1}{2}\|u\|^2_{H^2_*}-\|\sqrt{p}\,\big[u-P_{h-1}(p)u\big]\|_{2}\,.
	$$
 Furthermore, the minimum is achieved at a $h$-th eigenfunction normalized according to
	$$\|u_h\|^2_{H^2_*}=\|\sqrt{p}\,u_h\|_{2}=\dfrac{1}{\lambda_h(p)}$$
\end{lemma}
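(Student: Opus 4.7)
The plan is to prove the Auchmuty-type identity by a direct spectral expansion. First I would fix a complete system $\{u_i\}_{i\ge 1} \subset H^2_*$ of eigenfunctions of \eqref{eigenweak} (enumerated so that the associated eigenvalue is $\lambda_i(p)$) normalized by the weighted $L^2$-condition
$$
(p\, u_i, u_j)_{L^2} = \delta_{ij},
$$
where within each finite-dimensional eigenspace one performs a Gram-Schmidt orthogonalization with respect to $(p\,\cdot,\cdot)_{L^2}$. Substituting $v=u_j$ in the weak equation for $u_i$ immediately gives $(u_i,u_j)_{H^2_*} = \lambda_i(p)\delta_{ij}$, so the same system is orthogonal (not orthonormal) in $H^2_*$ with $\|u_i\|_{H^2_*}^2 = \lambda_i(p)$. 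Completeness of this system in $H^2_*$ is granted by Proposition \ref{general facts}.

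Next, for an arbitrary $u \in H^2_*$, I would expand $u = \sum_{i\ge 1} c_i\, u_i$ with $c_i = (p\, u, u_i)_{L^2}$. A direct computation then yields
$$
\|u\|^2_{H^2_*} = \sum_{i\ge 1} c_i^2\, \lambda_i(p), \qquad P_{h-1}(p)u = \sum_{i=1}^{h-1} c_i\, u_i, \qquad \bigl\|\sqrt{p}\,[u-P_{h-1}(p)u]\bigr\|_2^2 = \sum_{i\ge h} c_i^2,
$$
so that the functional to be minimized reads
$$
\mathcal{A}_h(p,u) = \frac{1}{2}\sum_{i\ge 1} c_i^2\, \lambda_i(p) - \Bigl(\sum_{i\ge h} c_i^2\Bigr)^{\!1/2}.
$$

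The minimization is now elementary and splits in three steps. Since $\lambda_i(p)>0$ for every $i$ and the subtracted term does not involve $c_1,\dots,c_{h-1}$, any minimizer must satisfy $c_i=0$ for $i<h$. For $i\ge h$, the monotonicity $\lambda_i(p)\ge \lambda_h(p)$ gives $\sum_{i\ge h} c_i^2\,\lambda_i(p) \ge \lambda_h(p)\sum_{i\ge h} c_i^2$, with equality iff all the mass is concentrated on the index $i=h$. Setting $r:=c_h$ and $c_i=0$ for $i>h$, the problem reduces to minimizing the scalar function $f(r) = \tfrac{1}{2}\lambda_h(p)\,r^2 - |r|$ over $r\in\R$; its unique nonnegative critical point is $r_* = 1/\lambda_h(p)$, giving $f(r_*) = -1/(2\lambda_h(p))$.

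The minimizer is therefore $u_* = r_*\,u_h = \frac{1}{\lambda_h(p)}\,u_h$, which is an eigenfunction of $\lambda_h(p)$ satisfying
$$
\|u_*\|^2_{H^2_*} = \tfrac{1}{\lambda_h(p)^2}\,\lambda_h(p) = \tfrac{1}{\lambda_h(p)}, \qquad \|\sqrt{p}\,u_*\|_2 = \tfrac{1}{\lambda_h(p)},
$$
in agreement with the stated normalization. No genuine obstacle is anticipated; the only slightly delicate point is the treatment of possibly multiple eigenvalues, which is absorbed into the initial orthonormalization within each eigenspace, and the justification that the expansion converges in $H^2_*$ (so that the series for $\|u\|_{H^2_*}^2$ is legitimate), which follows from the spectral theorem invoked in Proposition \ref{general facts}.
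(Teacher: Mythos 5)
Your proof is correct and complete. The paper does not actually write out an argument here: its stated ``proof'' is simply a pointer to \cite[Lemma 3.3]{cuccu22} (where the identity is proved for $H^2\cap H^1_0$) and to Auchmuty's original paper \cite{auchmuty}, noting that the argument transfers once one replaces the ambient Hilbert space by $H^2_*$. What you have done is supply the direct spectral computation that those references carry out: expand $u$ in the $L^2_p$-orthonormal basis of eigenfunctions (which, after scaling by $\lambda_i^{-1/2}$, is $H^2_*$-orthonormal by Proposition \ref{general facts}), reduce $\mathcal{A}_h(p,\cdot)$ to the scalar function $\tfrac12\lambda_h r^2-|r|$, and read off both the minimum value and the normalized minimizer. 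This is the same underlying Auchmuty argument the paper invokes by citation, just written out in full; it buys self-containment at no extra cost.

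One small imprecision worth noting: you write that in $\sum_{i\ge h}c_i^2\lambda_i\ge\lambda_h\sum_{i\ge h}c_i^2$ ``equality holds iff all the mass is concentrated on the index $i=h$.'' If $\lambda_h$ has multiplicity greater than one, equality holds whenever the mass is supported on the full eigenspace of $\lambda_h$ (indices $i$ with $\lambda_i=\lambda_h$), not only on $i=h$. This does not affect the validity of your argument, since you only need to exhibit one minimizer and the value of the infimum, but the ``iff'' as stated is not quite right in the degenerate case.
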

\begin{proof}
	 The proof follows arguing as in \cite[Lemma 3.3]{cuccu22} by simply replacing $H^2\cap H^1_0$ with $H^2_*$. In alternative, in \cite{auchmuty} one can find the original proof in a general setting.
\end{proof}
\begin{lemma}\label{compactness}
	 The set $P_{\alpha,\beta}$ is compact for the weak* topology of $L^\infty$.
\end{lemma}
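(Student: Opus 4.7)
\begin{pf}
The plan is to reduce to a straightforward application of the Banach--Alaoglu theorem together with the preservation of each of the three defining conditions of $P_{\alpha,\beta}$ under weak* convergence. Since $L^\infty(\Omega)$ is the dual of the separable space $L^1(\Omega)$, its closed balls are weak* sequentially compact; as $P_{\alpha,\beta}$ is contained in the ball of radius $\beta$, any sequence $\{p_n\}\subset P_{\alpha,\beta}$ admits a subsequence (not relabelled) and some $p\in L^\infty(\Omega)$ with $p_n\overset{*}{\weakly}p$, i.e.\ $\int_\Omega p_n\varphi\to\int_\Omega p\varphi$ for every $\varphi\in L^1(\Omega)$. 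The task is then to verify $p\in P_{\alpha,\beta}$.

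For the pointwise bounds, given any measurable $A\subset\Omega$, test weak* convergence against $\chi_A\in L^1(\Omega)$ to obtain $\int_A p\,dxdy=\lim_n\int_A p_n\,dxdy$. Since $\alpha|A|\leq\int_A p_n\,dxdy\leq\beta|A|$ for every $n$, the inequality passes to the limit; as $A$ is arbitrary, $\alpha\leq p\leq\beta$ a.e.\ in $\Omega$. The mass constraint follows at once by testing against $\varphi\equiv 1\in L^1(\Omega)$, which yields $\int_\Omega p\,dxdy=\lim_n\int_\Omega p_n\,dxdy=|\Omega|$.

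It remains to inherit the $y$-symmetry. For any $\varphi\in L^1(\Omega)$, set $\tilde\varphi(x,y):=\varphi(x,-y)$, which still belongs to $L^1(\Omega)$ with the same norm. Using $p_n(x,y)=p_n(x,-y)$ and the change of variable $y\mapsto -y$, one has
\begin{equation*}
\int_\Omega p_n(x,y)\varphi(x,y)\,dxdy=\int_\Omega p_n(x,y)\tilde\varphi(x,y)\,dxdy\quad\text{for every }n\in\N_+.
\end{equation*}
Passing to the weak* limit on both sides gives $\int_\Omega p\,\varphi\,dxdy=\int_\Omega p\,\tilde\varphi\,dxdy$, i.e.\ $\int_\Omega\bigl[p(x,y)-p(x,-y)\bigr]\varphi(x,y)\,dxdy=0$ for every $\varphi\in L^1(\Omega)$. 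Hence $p(x,y)=p(x,-y)$ a.e.\ in $\Omega$ and $p\in P_{\alpha,\beta}$.

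No step here is really an obstacle: the only point requiring a little care is the symmetry condition, where one must test against the reflected function $\tilde\varphi$ rather than $\varphi$ itself. Once that trick is in place, each of the three defining constraints is manifestly linear and closed under weak* limits, which yields the claimed weak* compactness (equivalently, sequential closedness inside a weak* compact ball).
\end{pf}
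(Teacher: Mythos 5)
Your proof is correct, and it takes a genuinely different route from the paper's. The paper first shows that $P_{\alpha,\beta}$ is strongly closed in $L^2$ (via an a.e.\ convergent subsequence), then invokes convexity together with Mazur's theorem (convex $+$ strongly closed $\Rightarrow$ weakly closed) to conclude that a weak $L^2$ limit stays in $P_{\alpha,\beta}$, and finally identifies this weak $L^2$ limit with the weak* limit produced by Banach--Alaoglu. You bypass that detour entirely: after extracting a weak* convergent subsequence, you verify each of the three defining constraints directly on the weak* limit by testing against well-chosen $L^1$ functions — characteristic functions for the $L^\infty$ bounds, the constant $1$ for the mass constraint, and the reflected test function $\tilde\varphi(x,y)=\varphi(x,-y)$ for the $y$-symmetry. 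Your argument is more elementary and self-contained (it needs no convexity, no strong $L^2$ closedness, and no reconciliation of two different limits), whereas the paper's argument compresses all three constraint checks into the single step "strongly closed in $L^2$" at the price of the auxiliary machinery. Both proofs rely, implicitly or explicitly, on the separability of $L^1$ so that sequential weak* compactness and weak* compactness coincide on bounded sets — a point you flag correctly at the end.
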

\begin{proof}
	First we prove that $P_{\alpha,\beta}$ is a strongly closed set in $L^2$.\par  
	Let $\{ p_m \}_m \subset P_{\alpha,\beta}$ be a sequence such that $p_m\rightarrow q$ in $L^2$ (as $m\rightarrow +\infty$) for some $q\in L^2$; then $p_m\rightarrow q$ in $L^1$ (as $m\rightarrow +\infty$) and up to a subsequence (still denoted by $p_m$) we infer that $p_m \rightarrow q$ a.e. in $\Omega$. Therefore, $\alpha\leq q\leq\beta$ and $q$ is $y$-even a.e. in $\Omega$; moreover, $
	\int_{\Omega}p_m\, v\,dx\,dy\rightarrow \int_{\Omega}q \,v\,dx\,dy$ for all $v\in L^2,
	$ so that, choosing $v\equiv 1\in L^2$, we obtain $|\Omega|=\int_{\Omega}q\,dx\,dy$. This implies that $q\in P_{\alpha,\beta}$ and $P_{\alpha,\beta}$ is strongly closed in $L^2$.  \par  
	Next, we show that any sequence $\{ p_m \}_m \subset P_{\alpha,\beta}$ admits a subsequence converging in the weak* topology of $L^\infty$ to an element of  $P_{\alpha,\beta}$.  By the definition of $P_{\alpha,\beta}$  we have $\|p_m\|_\infty\leq \beta$, so that, up to a subsequence, we obtain
$$
p_{m_k} \overset{\ast}{\rightharpoonup}  \overline{p} \quad \mbox{in} \ L^\infty\quad \mbox{as} \ k \rightarrow \infty\,.
$$
Moreover, we have $\|p_{m_k}\|_2^2\leq \beta|\Omega|$, so that, up to a subsequence, we infer that 
$p_{m_{k_{j}}}\rightharpoonup \overline q$ in $L^2$ as $j\rightarrow \infty$.
It is easy to check that $P_{\alpha,\beta}$ is a convex set and, since convex strongly closed space are weakly closed, we readily infer that $\overline q\in P_{\alpha,\beta}$.\par Therefore,
	$$
	 \int_{\Omega}p_{m_{k_{j}}}\, v\,dx\,dy\rightarrow\int_{\Omega}\overline q\, v\,dx\,dy \quad \forall v\in L^2\subset L^1\quad \mbox{as} \ j \rightarrow \infty \quad \text{with }\overline q\in P_{\alpha,\beta}\,
	$$
	and, since $p_{m_k} \overset{\ast}{\rightharpoonup}  \overline{p} \quad \mbox{in} \ L^\infty$ yields $\int_{\Omega}p_{m_{k_{j}}}\, v\,dx\,dy\rightarrow\int_{\Omega}\overline p\, v\,dx\,dy$ $\forall v\in L^1$, we conclude that $\overline p=\overline q$ a.e. in $\Omega$. Whence, $\overline p\in P_{\alpha,\beta}$ and the proof is complete.
\end{proof}
\begin{lemma}\label{continuity}
	Let $\lambda_h(p)$ the $h-$th eigenvalue of \eqref{weight} with $h\in N_+$. The map $p\mapsto\lambda_h(p)$ is continuous on $P_{\alpha,\beta}$ for the weak* convergence.
\end{lemma}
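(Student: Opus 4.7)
The plan is to encode the eigenvalue problem as a compact self-adjoint operator $K_p$ acting on $H^2_*$, to prove that $p\mapsto K_p$ is continuous in the operator-norm topology when $P_{\alpha,\beta}$ is endowed with the weak* topology, and then to invoke the classical spectral stability of compact self-adjoint operators on Hilbert spaces.

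Concretely, for each $p\in P_{\alpha,\beta}$ I would define $K_p: H^2_*\to H^2_*$ by Riesz representation,
\begin{equation*}
(K_p u, v)_{H^2_*} = \int_{\Omega} p\, u\, v \, dx\, dy \qquad \forall\, v\in H^2_*.
\end{equation*}
Since $\alpha\le p\le\beta$ and $H^2_*\hookrightarrow L^2$ compactly (Rellich, recalling $\Omega\subset\R^2$), the operator $K_p$ is compact, self-adjoint and positive, and its nonzero eigenvalues are precisely $\{1/\lambda_h(p)\}_{h\in\N_+}$, with eigenfunctions those of \eqref{eigenweak}. Given $\{p_m\}\subset P_{\alpha,\beta}$ with $p_m\overset{\ast}{\rightharpoonup}p$ in $L^\infty$, the heart of the argument is then to show that $\|K_{p_m}-K_p\|_{\mathrm{op}}\to 0$.

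I would establish this by contradiction: assume there exist $\delta>0$ and, up to a subsequence, unit vectors $u_m\in H^2_*$ with $\|w_m\|_{H^2_*}\ge\delta$, where $w_m:=K_{p_m}u_m-K_p u_m$. Choosing $v=w_m$ in the defining relations for $K_{p_m}$ and $K_p$ yields
\begin{equation*}
\|w_m\|_{H^2_*}^2 = \int_\Omega (p_m-p)\, u_m\, w_m\, dx\, dy.
\end{equation*}
The sequences $\{u_m\}$ and $\{w_m\}$ are bounded in $H^2_*$ (uniformly in $m$, since $\|K_{p_m}\|_{\mathrm{op}}$ is controlled by $\beta$ times the embedding constant), so by the compact embedding $H^2_*\hookrightarrow L^2$ a further subsequence satisfies $u_m\to u^*$ and $w_m\to w^*$ strongly in $L^2$; Cauchy--Schwarz then gives $u_m w_m\to u^*w^*$ in $L^1$. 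Combining this strong $L^1$ convergence with the uniform bound $\|p_m\|_\infty\le\beta$ and the weak* convergence $p_m\overset{\ast}{\rightharpoonup}p$ tested against $u^*w^*\in L^1$, the right-hand side above vanishes in the limit, contradicting $\|w_m\|_{H^2_*}^2\ge\delta^2$.

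Once $K_{p_m}\to K_p$ in operator norm is established, the conclusion $\lambda_h(p_m)\to\lambda_h(p)$ for every $h\in\N_+$ follows at once from the classical fact that the ordered eigenvalues of compact self-adjoint operators (counted with multiplicity) depend continuously on the operator in the uniform topology, which is a direct consequence of the min-max characterization \eqref{caract1}. The main technical delicacy is exactly the vanishing of $\int_\Omega (p_m-p)\,u_m\,w_m\,dx\,dy$: since $p_m-p$ converges only in the weak* sense of $L^\infty$, the product $u_m w_m$ must converge strongly in $L^1$ in order to absorb this weak convergence, and it is precisely the Rellich compactness of $H^2_*\hookrightarrow L^2$ on the planar domain $\Omega$ that supplies this strong convergence.
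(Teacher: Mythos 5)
Your proof is correct, but it takes a genuinely different route from the paper. The paper works directly with the eigenfunctions: it extracts weak $H^2_*$ limits of normalized eigenfunctions $u_h(p_m)$, shows the limit eigenvalues $\overline{\lambda}_h$ are eigenvalues of $\overline p$, and then invokes Auchmuty's dual variational principle (Lemma \ref{lemmaauchmuty}) in a contradiction argument to rule out a "missing" eigenfunction and conclude $\overline{\lambda}_h=\lambda_h(\overline p)$ for every $h$. You instead package the problem as a family of compact self-adjoint operators $K_p$ on $H^2_*$ and prove $\|K_{p_m}-K_p\|_{\mathrm{op}}\to 0$, which then yields convergence of all ordered eigenvalues at once from the classical min–max stability estimate. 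Your key estimate—testing the defining relation with $v=w_m$ and splitting $\int_\Omega(p_m-p)u_m w_m$ into a term handled by weak* convergence against the fixed $L^1$ function $u^*w^*$ and a term handled by the uniform bound $\|p_m-p\|_\infty\le 2\beta$ plus strong $L^1$ convergence of $u_m w_m$—is sound, and the needed compactness is exactly the Rellich embedding the paper also relies on. What each buys: the paper's Auchmuty-based argument is tailored to the variational framework it reuses elsewhere (e.g.\ in the proof of Theorem \ref{thm-nu}), so it keeps the toolbox small, whereas your operator-norm argument is more standard, more self-contained, and directly gives the stronger statement of uniform (in $h$) spectral convergence without any extra work. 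A minor cosmetic point: after assuming $\|K_{p_m}-K_p\|_{\mathrm{op}}\ge\delta$ along a subsequence, the witnesses $u_m$ can only be chosen with $\|w_m\|_{H^2_*}\ge\delta/2$ in general since the operator norm need not be attained, but this does not affect the contradiction.
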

\begin{proof}
	Let $\{ p_m \}_m \subset P_{\alpha,\beta}$ be a sequence converging in the weak* topology of $L^\infty$ to $\overline{p}$, i.e.
	$$
	p_m \overset{\ast}{\rightharpoonup}  \overline{p} \quad \mbox{in} \ L^\infty \quad \mbox{as} \ m \rightarrow \infty;
	$$
then $\overline{p}\in P_{\alpha,\beta}$ by Lemma \ref{compactness}.\par 
To $p_m$ we associate the $h$-th eigenvalue  $\lambda_h(p_m)$ of \eqref{weight} and an eigenfunction $u_h(p_m)$ normalized with respect to the weighted scalar product, i.e. $\int_\Omega p_m\,u_h(p_m)\,u_r(p_m)\,dx\,dy=\delta_{hr}$, where $\delta_{hr}$ is the Kronecker delta for all $h,r \in \mathbb{N}_+$ and $\lambda_h(p_m)=\|u_h(p_m)\|^2_{H^2_*}$.\par 
By \eqref{eq:famiglia} and \eqref{caract1} we have
$$
\lambda_h(p)\leq \dfrac{\lambda_h(1)}{\alpha}\qquad \forall p\in P_{\alpha,\beta},
$$
where $\lambda_h(1)$ is the $h$-th eigenvalue of \eqref{weight} with $p\equiv 1$,
implying that
$\lambda_h(p_m)=\|u_h(p_m)\|_{H^2_*} \leq \lambda_h(1)/\alpha$. Therefore, we can extract a subsequence, still denoted
by $u_h(p_m),$ such that
\begin{equation*}\label{wconvergence}
\begin{split}
&\lambda_h(p_m)\rightarrow  \overline \lambda_h\quad \mbox{in} \ \mathbb{R},\\
&u_h(p_m) \rightharpoonup  \overline{u}_h \quad \mbox{in} \ H^2_* \quad \mbox{as} \ m \rightarrow \infty.
\end{split}
\end{equation*}
Moreover, due to the compact embedding $H^2_* \hookrightarrow L^2$, we obtain that $
u_h(p_m)$ strongly converges to $\overline{u}_h$ in $L^2$ as $m \rightarrow \infty$; this implies, for all $v\in H^2_*$, that 
\begin{equation*}\label{pconv}
\int_\Omega p_m\,u_h(p_m)\,v\,dx\,dy\rightarrow\int_\Omega \overline p\,\overline u_h\,v\,dx\,dy \quad \mbox{as} \ m \rightarrow \infty
\end{equation*}
indeed
\begin{equation*}
\bigg|\int_\Omega (p_m\,u_h(p_m)-\overline p\,\overline u_h)\,v\,dx\,dy\bigg|\leq \|p_m v\|_2\|u_h(p_m)-\overline u_h\|_2+\bigg|\int_\Omega p_m\,\overline u_hv\,dx\,dy-\int_{\Omega} \overline p\,\overline u_hv\,dx\,dy\bigg|\rightarrow 0,
\end{equation*}
since $\overline u_h v\in H^2_*\subset L^1$. Therefore, we obtain
\begin{align*}
\big(u_h(p_m),v\big)_{H^2_*}-\lambda_h(p_m)\big( p_m\,u_h(p_m),v\big)_{L^2}\rightarrow \big(\overline u_h,v\big)_{H^2_*}-\overline \lambda_h\big( \overline p\,\overline u_h,v\big)_{L^2}\qquad \forall v\in H^2_* \quad \mbox{as} \ m \rightarrow \infty\,,
\end{align*}
 inferring that $\overline\lambda_h$ is an eigenvalue of \eqref{weight} and $\overline u_h$ is a corresponding eigenfunction. \par
Arguing as before we also obtain
$\int_\Omega p_m\,u_h(p_m)\,u_r(p_m)\,dx\,dy\rightarrow\int_\Omega \overline p\,\overline u_h\,\overline u_r\,dx\,dy=\delta_{hr}$ for all $h,r \in\mathbb{N}_+$, so that
$\overline \lambda_h$ is a diverging sequence for $h\rightarrow\infty$. To prove that $\overline \lambda_h=\lambda_h(\overline p)$ for every $h\in \mathbb{N}_+$, we assume by contradiction that, for $p=\overline p$, there exists an eigenfunction $\overline u$ associated with the eigenvalue $\overline \lambda$ such that $\big(\overline p\, \overline u,\overline u_h \big)_{L^2}=0$ for all $h\in\mathbb{N}_+$.
We suppose that $\overline u$ is normalized in $H^2_*$ so that $\|\sqrt{\overline p}\,\overline u\|_{2}=1/\overline \lambda$; applying Lemma \ref{lemmaauchmuty} we have 
\begin{equation}\label{assurdo}
-\dfrac{1}{2\lambda_h(p_m)}\leq \mathcal{A}_h(p_m,\overline u)=\dfrac{1}{2}\|\overline u\|^2_{H^2_*}-\|\sqrt{p_m}\,\big[\overline u-P_{h-1}(p_m)\overline u\big]\|_{2}\rightarrow \dfrac{1}{2}\|\overline u\|^2_{H^2_*}-\|\sqrt{\overline p}\,\overline u\|_{2}=-\dfrac{1}{2\overline \lambda},
\end{equation}
where the convergence comes from
$$
P_{h-1}(p_m)\overline u=\sum_{i=1}^{h-1}\big( p_m\overline u, u_i(p_m) \big)_{L^2}\,u_i(p_m)\rightarrow \sum_{i=1}^{h-1}\big(\overline p\, \overline u, \overline u_i \big)_{L^2}\,\overline u_i=0\quad {\rm in} \hspace{1mm} L^2.
$$
Therefore, by \eqref{assurdo}, letting $m\rightarrow \infty$, we obtain
$$
\overline \lambda\geq \lambda_h(p_m)\rightarrow \overline \lambda_h\qquad \forall h\in\mathbb{N}_+,
$$
giving a contradiction since $\overline \lambda_h$ is an unbounded sequence for $h\rightarrow\infty$. Thus $\overline \lambda_h= \lambda_h(\overline p)$, implying the continuity of $p\mapsto\lambda_h(p)$ for every $h\in\mathbb{N}_+$ fixed.
\end{proof} 
{\bf Proof of Theorem \ref{existence} completed.}
Let us consider the function $F: (0,+\infty)\times (0,+\infty)\mapsto \mathbb{R}$ given by $F(t,s):=\dfrac{t}{s}$, continuous on its domain.  By Lemma \ref{continuity}, the maps $p\mapsto \nu_1(p)$ and $p\mapsto \mu_{j_0}(p)$ are continuous on $P_{\alpha,\beta}$ for the weak* convergence; since $\nu_1(p)>\mu_{j_0}(p)>0$, we infer that $F(\mu_{j_0}(p),\nu_1(p))=\dfrac{\nu_1(p)}{\mu_{j_0}(p)}$ is also continuous on $P_{\alpha,\beta}$ for the weak* convergence. Finally, the existence of a maximum (or minimum) of $F(\mu_{j_0}(p),\nu_1(p))=\dfrac{\nu_1(p)}{\mu_{j_0}(p)}$ on $P_{\alpha,\beta}$ follows thanks to the compactness proved in Lemma \ref{compactness} of the set $P_{\alpha,\beta}$ for the weak* topology of $L^{\infty}$.\par

\subsection{Proof of Theorem \ref{thm-nu}}
The existence of an optimal pair for \eqref{CP2} follows as in the proof of Theorem \ref{existence} by considering the continuous function $F(\mu_{j_0}(p),\nu_1(p))=\nu_1(p)$. In the sequel we will denote by $(\widehat{p},\widehat u) \in P_{\alpha, \beta}\times H^2_\mathcal{O}$ an optimal pair for \eqref{CP2} suitable normalized as follows
\begin{equation}\label{norm}
\nu^{\alpha,\beta}_1= \, \nu_1(\widehat{p})= \frac{\|\widehat  u\|_{H^2_*}^2}{\|\sqrt{\widehat{p}}\,\,\widehat  u\|_{2}^2}\quad \text{and} \quad \|\widehat u \|^2_{H^2_*}=\dfrac{1}{\nu_1(\widehat{p})}\,.
\end{equation}

Next we state a couple of lemmas useful to complete the proof.
\begin{lemma}\label{rearrangement}
Let $u\in H^2_* \setminus \{0\}$ and let $J:P_{\alpha,\beta} \rightarrow \R$ be defined as follows
$$J(p)=\int_{\Omega} p(x,y)u^2\,dx\,dy\,.$$
Then, the problem
$$I_{\alpha, \beta}:=\inf_{p\in P_{\alpha,\beta} } J(p)$$
admits the solution 
$$
{p}_u(x,y) = \beta \chi_{\widetilde{ S}} (x,y)+ \alpha \chi_{\Omega \setminus \widetilde{{S}}}(x,y)\,\quad \text{for a.e. } (x,y)\in \Omega\,,
$$
	where $ \widetilde{S}=\widetilde{S}(u) \subset \Omega$
	is such that $|\widetilde{S}|=\frac{1-\alpha}{\beta-\alpha}\,|\Omega|:=\widetilde C_{\alpha, \beta}$. Furthermore, define
	\begin{equation} \label{eq:def-t}
	t:= \sup \left\{ s \geq 0 : |\{(x,y)\in \Omega\,: u^2(x,y) \leq s \}| < \widetilde C_{\alpha, \beta}\right\},
	\end{equation}
	if $t=0$ we have that
	\begin{equation*}
	\widetilde{S}\subseteq\{(x,y)\in \Omega \, :u^2(x,y)=0 \} 
	\end{equation*}
	while if $t>0$ we have that
	\begin{equation}\label{S}
	 \{(x,y)\in \Omega\,:u^2(x,y) < t \} \subseteq \widetilde{S} \subseteq \{(x,y)\in \Omega\,: u^2(x,y)\leq t\}\,.
	\end{equation}
	
	Similarly, the problem
	$$M_{\alpha, \beta}:=\sup_{p\in P_{\alpha,\beta} } J(p)$$
	admits the solution 
	$$
	p_u(x,y) = \alpha \chi_{\check{ S}} (x,y)+ \beta \chi_{\Omega \setminus \check{{S}}}(x,y)\,\quad \text{for a.e. }(x,y)\in \Omega\,,
	$$
	where $ \check{S}=\check{S}(u) \subset \Omega$
	is such that $|\check{S}|=\frac{\beta-1}{\beta-\alpha}\,|\Omega|=:\check C_{\alpha, \beta}$ and satisfies \eqref{S}-\eqref{eq:def-t} with $\widetilde C_{\alpha, \beta}$ replaced by $\check C_{\alpha, \beta}$.	

\end{lemma}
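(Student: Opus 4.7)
The plan is to translate the problem into a classical bathtub-principle / layer-cake rearrangement problem and establish optimality by a one-line pointwise identity.

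First, I would reparametrize the problem. For $p \in P_{\alpha,\beta}$, set $q := p - \alpha$, so that $q \in L^\infty(\Omega)$ with $0 \leq q \leq M := \beta - \alpha$ a.e., $q$ is $y$-even, and the mass constraint $\int_\Omega p\,dx\,dy = |\Omega|$ becomes $\int_\Omega q\,dx\,dy = (1-\alpha)|\Omega|$. Since $J(p) = \alpha \|u\|_2^2 + \int_\Omega q\, u^2\,dx\,dy$, minimizing $J$ over $P_{\alpha,\beta}$ is equivalent to minimizing $\int_\Omega q\, u^2\,dx\,dy$ over admissible $q$; the target bang-bang minimizer corresponds to $q^\star = M\,\chi_{\widetilde S}$ for some measurable $\widetilde S$ of measure $\widetilde C_{\alpha,\beta} = (1-\alpha)|\Omega|/(\beta-\alpha)$.

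Next, I would exhibit the candidate set $\widetilde S$. Using right-continuity of the distribution function $s \mapsto |\{u^2 \leq s\}|$, the level $t$ defined in \eqref{eq:def-t} satisfies $|\{u^2 < t\}| \leq \widetilde C_{\alpha,\beta} \leq |\{u^2 \leq t\}|$ when $t>0$, and $|\{u^2=0\}| \geq \widetilde C_{\alpha,\beta}$ when $t=0$. Hence I can select $\widetilde S$ satisfying \eqref{S} (respectively, $\widetilde S \subseteq \{u^2 = 0\}$) with $|\widetilde S| = \widetilde C_{\alpha,\beta}$. Since the applications of the lemma concern functions $u$ with definite $y$-parity (so $u^2$ is $y$-even), all sub-level sets of $u^2$ are $y$-symmetric and $\widetilde S$ can also be chosen $y$-symmetric, which places $p_u := \alpha + M\,\chi_{\widetilde S} \in P_{\alpha,\beta}$.

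The optimality is then a pointwise argument: since $\int_\Omega q\,dx\,dy = \int_\Omega q^\star\,dx\,dy$,
\begin{equation*}
\int_\Omega q\, u^2\,dx\,dy - \int_\Omega q^\star u^2\,dx\,dy = \int_\Omega (q - q^\star)(u^2 - t)\,dx\,dy.
\end{equation*}
On $\widetilde S$ one has $q \leq M = q^\star$ and $u^2 \leq t$; on $\Omega \setminus \widetilde S$ one has $q^\star = 0 \leq q$ and, because $\{u^2 < t\} \subseteq \widetilde S$, also $u^2 \geq t$. In either case the integrand is non-negative, giving $J(p) \geq J(p_u)$.

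Finally, the supremum case is handled symmetrically: writing $p = \beta - \widetilde q$ with $0 \leq \widetilde q \leq M$ and $\int_\Omega \widetilde q\,dx\,dy = (\beta-1)|\Omega|$, maximizing $J(p)$ reduces to minimizing $\int_\Omega \widetilde q\, u^2\,dx\,dy$, and the same rearrangement argument with new measure $\check C_{\alpha,\beta}$ yields the sub-level characterization of $\check S$, now corresponding to $p = \alpha$ on $\check S$ and $p = \beta$ on $\Omega \setminus \check S$. The only point requiring care is the possibility that $\{u^2 = t\}$ has positive measure, which is precisely why the statement sandwiches $\widetilde S$ between strict and non-strict sub-level sets in \eqref{S} rather than prescribing it uniquely; this is a routine but unavoidable technicality in bathtub-principle arguments.
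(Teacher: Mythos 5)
Your proof is correct and follows essentially the same route as the paper: both are bathtub-principle arguments in which the mass constraint $\int_\Omega (p - p_u)\,dx\,dy = 0$ reduces optimality to the sign of $\int_\Omega (p - p_u)(u^2 - t)\,dx\,dy$, the paper organizing this by splitting into $\{u^2<t\}$, $\{u^2>t\}$, $\{u^2=t\}$ and you organizing it via the shift $q=p-\alpha$ and a single pointwise sign check. Your explicit discussion of right-continuity of $s\mapsto|\{u^2\le s\}|$ and of $y$-symmetry of the sublevel sets (needed so that $p_u\in P_{\alpha,\beta}$) is slightly more careful than the paper's, but the core argument is identical.
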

\begin{proof}
We only prove the statement for $I_{\alpha, \beta}$ since the statement for $M_{\alpha, \beta}$ follows basically by reversing all the inequalities below. Since ${p}_u\in P_{\alpha,\beta}$ we have
$$
I_{\alpha,\beta}\leq J({ p}_u)\,.
$$
If we prove that 
$$
J(p)\geq J({ p}_u)\qquad \forall p\in P_{\alpha,\beta},
$$
the thesis is obtained. To this aim, we first consider the case $t>0$, we have
\begin{align*} \label{eq:optimal2}
& \int_{\Omega} u^2 (p_u-p) \, dx\, dy \\[8pt]
\notag &  =\int_{\{ u^2 < t\}} u^{2} \, (\beta-p) \, dx\, dy + \int_{\{u^2 > t\}} u^{2} \, (\alpha-p) \, dx\, dy +
\int_{\{ u^2 = t\}} u^{2} \, (p_u-p) \, dx\, dy \\[8pt]
\notag & \leq t \int_{\{ u^2 < t\}} (\beta-p) \, dx\, dy + t \int_{\{ u^2 > t\}} (\alpha-p) \, dx\, dy
+t \int_{\{u^2 = t\}} \, (p_u-p) \, dx\, dy \\[8pt]
& \notag  = t \int_{\Omega} (p_u-p) \, dx\, dy=0 \,,
\end{align*}
where the last equality comes from the preservation of the total mass condition. \par

Similarly, for $t=0$ we have
$$
\int_{\Omega} u^2 (p_u-p) \, dx\, dy = \int_{\{ u^2 > 0\}} u^2\,(p_u-p)\, dx\, dy\leq 0.
$$
In both the cases we conclude that $J(p)\geq J({ p}_u)$, and in turn that $J({ p}_u)=I_{\alpha, \beta}$.\par

\end{proof}

We will also invoke the Auchmuty's principle recalled in Lemma \ref{lemmaauchmuty} that, in terms of $\nu_1(p)$, rewrites
	\begin{equation}\label{AP}
	-\dfrac{1}{2\nu_1(p)} =\inf_{u \in H^2_\mathcal{O}}\mathcal{A}(p,u)\qquad \mathcal{A}(p,u):=\dfrac{1}{2}\|u\|^2_{H^2_*}-\|\sqrt{p}\,u\|_{2}\,.
	\end{equation}
	By this,  if $(\widehat{p},\widehat u)\in P_{\alpha,\beta}\times H^2_\mathcal{O} $ and \eqref{norm} is satisfied, it is readily deduced that
	\begin{equation}\label{AP2}
	\sup_{p\in P_{\alpha, \beta}} \inf_{u \in H^2_\mathcal{O}}\mathcal{A}(p,u) = -\dfrac{1}{2\nu^{\alpha,\beta}_1}=-\dfrac{1}{2\nu_1(\widehat{p})} =\inf_{u \in H^2_\mathcal{O}}\mathcal{A}(\widehat{p},u) =\mathcal{A}(\widehat{p},\widehat u)\,.
	\end{equation}
Furthermore, we have
\begin{lemma} \label{infsup}
	Let $\mathcal{A}(p,u)$ be as defined in \eqref{AP}, the following equality holds
	\begin{equation}\label{scambio}
\sup_{p\in P_{\alpha, \beta}} \inf_{u \in H^2_\mathcal{O}}\mathcal{A}(p,u) =\inf_{u \in H^2_\mathcal{O}} \sup_{p\in P_{\alpha, \beta}} \mathcal{A}(p,u)\,.
\end{equation}
\end{lemma}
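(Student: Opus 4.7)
The inequality $\sup_p\inf_u\mathcal{A}\leq \inf_u\sup_p\mathcal{A}$ is the standard max–min inequality and holds for free, so the work is entirely in the reverse direction. My strategy is to show that the optimal pair $(\widehat p,\widehat u)\in P_{\alpha,\beta}\times H^2_\mathcal{O}$ fixed in \eqref{norm} is a \emph{saddle point} for $\mathcal A$; the equality \eqref{scambio} then follows from the routine chain
\begin{equation*}
\inf_u\sup_p\mathcal{A}(p,u)\;\leq\;\sup_p\mathcal{A}(p,\widehat u)\;\leq\;\mathcal A(\widehat p,\widehat u)\;=\;\sup_p\inf_u\mathcal{A}(p,u),
\end{equation*}
where the final equality is exactly \eqref{AP2}. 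One half of the saddle property, namely $\mathcal A(\widehat p,u)\geq \mathcal A(\widehat p,\widehat u)$ for every $u\in H^2_\mathcal{O}$, is a direct restatement of Lemma \ref{lemmaauchmuty} applied to $p=\widehat p$.

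The substance of the proof therefore lies in establishing the other half of the saddle inequality, i.e.
\begin{equation*}
\mathcal A(p,\widehat u)\leq \mathcal A(\widehat p,\widehat u)\qquad\forall\,p\in P_{\alpha,\beta}.
\end{equation*}
Since $\mathcal A(p,\widehat u)=\tfrac12\|\widehat u\|^2_{H^2_*}-\bigl(\int_\Omega p\,\widehat u^2\,dxdy\bigr)^{1/2}$, this is equivalent to $\widehat p$ being a \emph{minimizer} of the linear functional $p\mapsto \int_\Omega p\,\widehat u^2\,dxdy$ on $P_{\alpha,\beta}$, namely
\begin{equation*}
\int_\Omega \widehat p\,\widehat u^2\,dxdy \;\leq\; \int_\Omega p\,\widehat u^2\,dxdy \qquad \forall\,p\in P_{\alpha,\beta}. \tag{$\ast$}
\end{equation*}
To prove $(\ast)$, I would invoke the maximality of $\widehat p$ for $\nu_1(\cdot)$ via a first-order condition: for arbitrary $q\in P_{\alpha,\beta}$ and $t\in[0,1]$, the convex combination $p_t:=(1-t)\widehat p+tq$ again lies in $P_{\alpha,\beta}$ by convexity of the latter, and the variational characterization \eqref{mu-caract} with $\widehat u$ as a test function gives
\begin{equation*}
\nu_1(p_t)\;\leq\;\frac{\|\widehat u\|^2_{H^2_*}}{\int_\Omega p_t\,\widehat u^2\,dxdy},
\end{equation*}
while maximality forces $\nu_1(p_t)\leq \nu_1(\widehat p)$. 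A Hellmann--Feynman / Kato-type computation of $\frac{d}{dt}\big|_{t=0^+}\nu_1(p_t)$, combined with the normalization \eqref{norm} (so that $\int_\Omega\widehat p\,\widehat u^2\,dxdy=1/\nu_1(\widehat p)^2$), then delivers the required sign: $\int_\Omega(q-\widehat p)\widehat u^2\,dxdy\geq 0$, which is $(\ast)$.

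The principal obstacle I expect is that this first-order analysis is cleanest when $\nu_1(\widehat p)$ is a \emph{simple} eigenvalue; as pointed out in Section \ref{problem} simplicity of the low eigenvalues of \eqref{weight} is not known in general. In the degenerate case the map $t\mapsto\nu_1(p_t)$ admits only one-sided directional derivatives and the first-order inequality is only produced for some convex combination $\sum_i a_i\widehat u_i^2$ of squared normalized eigenfunctions spanning the eigenspace of $\nu_1(\widehat p)$, not for a single $\widehat u^2$. To close the gap one has to exploit the superdifferential of the concave function $\Phi(p):=\inf_u\mathcal A(p,u)=-1/(2\nu_1(p))$ — which is concave on $P_{\alpha,\beta}$ as the infimum of the family of concave functions $\mathcal A(\cdot,u)$ — combined with a Carath\'{e}odory-type reduction inside the finite-dimensional eigenspace (or a selection argument on its unit sphere) in order to recover $(\ast)$ for a single eigenfunction $\widehat u$, which is the one used in the saddle-point argument above.
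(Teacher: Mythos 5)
The paper itself gives no self-contained proof of Lemma \ref{infsup}; it simply points to \cite[Lemma 3.7]{cuccu22} and \cite{cox}, which establish the minimax equality by a direct variational argument (existence of a minimizer $\bar u$ of $u\mapsto\sup_p\mathcal A(p,u)$, identification of $\bar u$ as an eigenfunction of an associated extremal weight) that at no point requires the first torsional eigenvalue to be simple. Your route is genuinely different: you try to build the saddle point directly from the maximizer $\widehat p$ of $\nu_1$ via a first-order (Hellmann--Feynman) condition. Unfortunately this leaves a real gap, which you correctly flag but do not close. In the degenerate case the first-order condition gives only that there is some element of the subdifferential of $p\mapsto 1/\nu_1(p)$ at $\widehat p$ — i.e.\ a convex combination $\sum_i a_i\widehat u_i^2$ of squared normalized eigenfunctions — whose integral against $p-\widehat p$ has the right sign. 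That is not the inequality $(\ast)$ you need: for your chain of inequalities you must test $\mathcal A$ at a \emph{single} $u\in H^2_\mathcal{O}$, and $\sqrt{\sum_i a_i\widehat u_i^2}$ is in general not in $H^2_\mathcal{O}$, nor the square of any eigenfunction, so the inequality for the convex combination does not convert into $(\ast)$ for one $\widehat u$. The proposed ``Carath\'{e}odory-type reduction'' or ``selection argument on the unit sphere'' is not spelled out and is not routine; as it stands the degenerate case is simply not handled, while the referenced proof avoids simplicity entirely by starting from the inf-sup problem and producing the matching $(p^*,\bar u)$, rather than fixing $\widehat p$ in advance.

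Two further remarks. First, be aware that $(\ast)$ is exactly inequality \eqref{crucial} from Lemma \ref{sella}, and in the paper Lemma \ref{sella} is deduced \emph{from} Lemma \ref{infsup}; so an independent proof of $(\ast)$ is indeed required for your approach, and without it the argument risks circularity. Second, the statement that $\mathcal A(\cdot,u)$ is concave in $p$ is false: $p\mapsto(\int_\Omega p\,u^2)^{1/2}$ is concave, hence $\mathcal A(p,u)=\tfrac12\|u\|^2_{H^2_*}-(\int_\Omega p\,u^2)^{1/2}$ is \emph{convex} in $p$, so $\Phi=\inf_u\mathcal A(\cdot,u)$ cannot be declared concave as ``an infimum of concave functions.'' The conclusion that $\Phi(p)=-1/(2\nu_1(p))$ is concave does hold, but for a different reason: $1/\nu_1(p)=\sup_{\|u\|_{H^2_*}=1}\int_\Omega p\,u^2$ is a supremum of linear functionals of $p$ and is therefore convex, whence $-\tfrac12(1/\nu_1)$ is concave. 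You should correct the justification, although this is a secondary issue compared with the unresolved degeneracy gap.
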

The proof of Lemma \ref{infsup} is the same of \cite[Lemma 3.7]{cuccu22} once replaced the set $H^2\cap H^1_0$ there with our set $H^2_\mathcal{O}$ (strongly and weakly closed subspace of $H^2$).  Hence, we omit it and we refer the interested readers to \cite{cuccu22} or \cite{cox}, where the proof was originally given in the second order case. \par
Finally, we prove

\begin{lemma}\label{sella}
There exists an optimal pair $(\widehat{p},\widehat u)\in P_{\alpha,\beta}\times H^2_\mathcal{O} $ as in \eqref{norm} such that there holds
\begin{equation}\label{crucial}
\int_{\Omega} \widehat{p}(x,y)\, \widehat u^2\, dx dy\leq \int_{\Omega} p(x,y)\, \widehat u^2\, dx dy \qquad \forall p \in P_{\alpha, \beta}\,.
\end{equation}
\end{lemma}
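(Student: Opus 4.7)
\begin{pf}[Plan]
The plan is to realize $(\widehat p, \widehat u)$ as a saddle point of the concave-convex functional $\mathcal{A}$ on $P_{\alpha,\beta} \times H^2_{\mathcal{O}}$, and then read off the inequality \eqref{crucial} from the fact that $\widehat p$ \emph{minimizes} $p \mapsto \int_\Omega p\,\widehat u^2$ over $P_{\alpha,\beta}$. The starting point is the minimax identity of Lemma \ref{infsup} combined with \eqref{AP2}:
\begin{equation*}
-\frac{1}{2\nu_1^{\alpha,\beta}} \,=\, \sup_{p\in P_{\alpha,\beta}}\inf_{u\in H^2_{\mathcal{O}}}\mathcal{A}(p,u) \,=\, \inf_{u\in H^2_{\mathcal{O}}}\sup_{p\in P_{\alpha,\beta}}\mathcal{A}(p,u).
\end{equation*}

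First I would analyze the inner sup. Since the term $\frac12\|u\|^2_{H^2_*}$ in $\mathcal{A}(p,u)$ does not depend on $p$, we have
\begin{equation*}
\sup_{p\in P_{\alpha,\beta}}\mathcal{A}(p,u) \,=\, \tfrac{1}{2}\|u\|^2_{H^2_*} - \inf_{p\in P_{\alpha,\beta}}\|\sqrt{p}\,u\|_{2},
\end{equation*}
and the inner infimum is achieved by Lemma \ref{rearrangement} applied to $u$, at a bang-bang weight $p_u=\beta\chi_{\widetilde S(u)}+\alpha\chi_{\Omega\setminus\widetilde S(u)}$ with $|\widetilde S(u)|=\tfrac{1-\alpha}{\beta-\alpha}|\Omega|$. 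Define
\begin{equation*}
G(u) \,:=\, \sup_{p\in P_{\alpha,\beta}}\mathcal{A}(p,u) \,=\, \mathcal{A}(p_u,u),
\end{equation*}
so that $-\tfrac{1}{2\nu_1^{\alpha,\beta}} = \inf_{u\in H^2_{\mathcal{O}}} G(u)$.

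The second step is to prove that this infimum is attained at some $\widehat u\in H^2_{\mathcal{O}}$. Coercivity follows from $\inf_p\|\sqrt{p}\,u\|_2 \le \sqrt{\beta}\,\|u\|_2 \le C\|u\|_{H^2_*}$, so $G(u)\ge \tfrac12\|u\|^2_{H^2_*}-C\|u\|_{H^2_*}\to +\infty$ as $\|u\|_{H^2_*}\to\infty$. For weak lower semicontinuity on $H^2_{\mathcal{O}}$: if $u_n\rightharpoonup u$ in $H^2_{\mathcal{O}}$ then the compact embedding $H^2_*\hookrightarrow L^2$ gives $u_n\to u$ in $L^2$, and by standard arguments the functional $v\mapsto \inf_{p\in P_{\alpha,\beta}}\int_\Omega p v^2\,dxdy$ is continuous in $L^2$ (it is Lipschitz in $v^2$ uniformly for $p\in P_{\alpha,\beta}$); combined with the weak lower semicontinuity of $\|\cdot\|_{H^2_*}$ this yields $G(u)\le\liminf_n G(u_n)$. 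Since $G(0)=0>-\tfrac{1}{2\nu_1^{\alpha,\beta}}$, any minimizer $\widehat u$ is nonzero.

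Third, set $\widehat p := p_{\widehat u}$ given by Lemma \ref{rearrangement}. By construction $\widehat p\in P_{\alpha,\beta}$ and
\begin{equation*}
\int_\Omega \widehat p\,\widehat u^2\,dxdy \,=\, \inf_{p\in P_{\alpha,\beta}}\int_\Omega p\,\widehat u^2\,dxdy,
\end{equation*}
which is exactly \eqref{crucial}. To conclude that $(\widehat p,\widehat u)$ is an optimal pair in the sense of \eqref{norm}, I chain
\begin{equation*}
-\frac{1}{2\nu_1^{\alpha,\beta}} \,=\, G(\widehat u) \,=\, \mathcal{A}(\widehat p,\widehat u) \,\ge\, \inf_{u\in H^2_{\mathcal{O}}}\mathcal{A}(\widehat p,u) \,=\, -\frac{1}{2\nu_1(\widehat p)} \,\ge\, -\frac{1}{2\nu_1^{\alpha,\beta}},
\end{equation*}
where the last inequality uses the definition of $\nu_1^{\alpha,\beta}$. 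All inequalities must be equalities, so $\nu_1(\widehat p)=\nu_1^{\alpha,\beta}$ and $\widehat u$ realizes $\inf_u\mathcal{A}(\widehat p,u)$; by Lemma \ref{lemmaauchmuty}, $\widehat u$ is then a first torsional eigenfunction of \eqref{weight} with weight $\widehat p$, normalized as in \eqref{norm}.

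The main obstacle is the existence step: establishing weak lower semicontinuity and coercivity of the nonsmooth functional $G$ on $H^2_{\mathcal{O}}$, in particular the stability of the rearrangement infimum $\inf_p\|\sqrt{p}\,u\|_2$ under strong $L^2$ convergence of $u$. The remaining parts (the minimax swap, the bang-bang structure of $\widehat p$, and the Auchmuty-based identification of $\widehat u$ as an eigenfunction) are then mechanical once the saddle point is in hand.
\end{pf}
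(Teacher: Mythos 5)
Your proposal is correct and follows essentially the same path as the paper: define $G(u)=B(u)=\sup_{p\in P_{\alpha,\beta}}\mathcal{A}(p,u)$, minimize it over $H^2_{\mathcal{O}}$, and then exploit the minimax identity of Lemma \ref{infsup} together with Auchmuty's characterization \eqref{AP}--\eqref{AP2} and the rearrangement Lemma \ref{rearrangement}. Two small differences are worth noting. For the weak lower semicontinuity of $G$, you establish full (Lipschitz) continuity of $v\mapsto\inf_{p}\int_\Omega p\,v^2$ under strong $L^2$ convergence via the uniform bound $\bigl|\int_\Omega p(u^2-v^2)\bigr|\le\beta\|u-v\|_2\|u+v\|_2$; the paper only proves upper semicontinuity by testing against the fixed competitor $p_{\overline u}$, which is all that is actually needed. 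At the end, you build the optimal pair from scratch by setting $\widehat p := p_{\widehat u}$ and closing a self-contained chain of equalities $-\tfrac{1}{2\nu_1^{\alpha,\beta}}=G(\widehat u)=\mathcal{A}(\widehat p,\widehat u)\ge -\tfrac{1}{2\nu_1(\widehat p)}\ge -\tfrac{1}{2\nu_1^{\alpha,\beta}}$, which directly gives both optimality and \eqref{crucial} in one stroke; the paper instead starts from a pre-existing optimal pair $(\widehat p,\widehat u)$ (from the normalization \eqref{norm}), derives the saddle-point inequalities $\mathcal{A}(p,\overline u)\le\mathcal{A}(\widehat p,\overline u)\le\mathcal{A}(\widehat p,u)$ for the minimizer $\overline u$ of $B$, and then replaces $\widehat u$ by $\overline u$. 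Your version is slightly more streamlined but relies on the same ingredients and yields the same conclusion.
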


\begin{proof}
The idea of the proof is taken from \cite[Proposition 3.8]{cuccu22}, the main difference here is the use of Lemma \ref{rearrangement}. \par
First we consider the functional $B: H^2_\mathcal{O} \rightarrow \R$ defined as follows
$$B(u)= \sup_{p\in P_{\alpha, \beta}} \mathcal{A}(p,u)=\dfrac{1}{2}\|u\|^2_{H^2_*}-\inf_{p\in P_{\alpha, \beta}}\|\sqrt{p}\,u\|_{2} \,,$$
and we claim that
\begin{equation}\label{claim}
\text{there exists } \bar u \in H^2_\mathcal{O} \text{ such that } B(\bar u)= \inf_{u \in H^2_\mathcal{O}} B(u)=\inf_{u \in H^2_\mathcal{O}}  \sup_{p\in P_{\alpha, \beta}} \mathcal{A}(p,u)=:I\,.
\end{equation}
To this aim, let $\{u_k\}$ be a minimizing sequence for $I$, namely
$$\dfrac{1}{2}\|u_k\|^2_{H^2_*}-\inf_{p\in P_{\alpha, \beta}}\|\sqrt{p}\,u_k\|_{2}=I+o(1) \quad \text{as } k\rightarrow + \infty\,.$$
By the boundedness of $p$ and the continuity of the embedding $H^2_*\subset L^2$ it is readily deduced that
$$\|u_k\|^2_{H^2_*}\leq C \|u_k\|_{H^2_*}+2I+o(1) \quad \text{as } k\rightarrow + \infty$$
and, in turn, that 
$$\|u_k\|_{H^2_*}\leq \overline C  \quad \text{for } k\text{ sufficiently large,}$$
with $C, \overline C>0$. Then, up to a subsequence, we have
$$u_k \rightharpoonup \overline u \text{ in } H^2_*\quad  \text{ and } \quad u_k \rightarrow \overline u \text{ in } L^2\quad \text{as } k\rightarrow + \infty\,.$$
Therefore,
$$\|\overline u\|^2_{H^2_*}\leq \liminf_{k\rightarrow + \infty}\|u_k\|^2_{H^2_*}\,.$$
Next we take ${p}_{\overline u}$ as given in Lemma \ref{rearrangement}, namely such that
$$\inf_{p\in P_{\alpha,\beta} } \int_{\Omega} p(x,y)\, \overline u^2\,dx\,dy= \int_{\Omega} {p}_{\overline u}(x,y) \, \overline u^2\,dx\,dy\,.$$
Clearly,
$$\lim_{k\rightarrow + \infty} \int_{\Omega} {p}_{\overline u}(x,y) \, u_k^2\,dx\,dy=\int_{\Omega} {p}_{\overline u}(x,y) \, \overline u^2\,dx\,dy$$
and 
$$\inf_{p\in P_{\alpha,\beta} } \int_{\Omega} p(x,y)\, u_k^2\,dx\,dy\leq  \int_{\Omega} {p}_{\overline u}(x,y) \, u_k^2\,dx\,dy\,.$$
In particular, we conclude that
$$\limsup_{k\rightarrow + \infty}\inf_{p\in P_{\alpha, \beta}} \int_{\Omega} p(x,y)\, u_k^2\,dx\,dy\leq \limsup_{k\rightarrow + \infty} \int_{\Omega} {p}_{\overline u}(x,y) \,u_k^2\,dx\,dy=\inf_{p\in P_{\alpha,\beta} } \int_{\Omega} p(x,y)\, \overline u^2\,dx\,dy\,.$$
The above inequalities yield
$$B(\overline u)\leq \liminf_{k\rightarrow + \infty} B(u_k)=I$$
which is the claim \eqref{claim}. \par \smallskip\par

By combining  \eqref{AP2}, \eqref{scambio} and \eqref{claim}, it follows that
$$ \sup_{p\in P_{\alpha, \beta}} \mathcal{A}(p,\overline u)=\inf_{u \in H^2_\mathcal{O}}  \sup_{p\in P_{\alpha, \beta}} \mathcal{A}(p,u)=\sup_{p\in P_{\alpha, \beta}}\inf_{u \in H^2_\mathcal{O}}   \mathcal{A}(p,u)=\mathcal{A}(\widehat p, \widehat u)= \inf_{u \in H^2_\mathcal{O}}   \mathcal{A}(\widehat p,u)\,.$$
In particular, this implies that
\begin{equation}\label{ineqbaru}
\mathcal{A}(p,\overline u)\leq  \sup_{p\in P_{\alpha, \beta}} \mathcal{A}(p,\overline u) =  \inf_{u \in H^2_\mathcal{O}}   \mathcal{A}(\widehat p,u)\leq  \mathcal{A}(\widehat p,\overline u) \qquad \forall p\in P_{\alpha, \beta} 
\end{equation}
and that
$$\mathcal{A}(\widehat p,\overline u)\leq  \sup_{p\in P_{\alpha, \beta}} \mathcal{A}(p,\overline u)=  \inf_{u \in H^2_\mathcal{O}}   \mathcal{A}(\widehat p,u)\leq  \mathcal{A}(\widehat p, u) \qquad \forall u\in  H^2_\mathcal{O}\,.$$
From the above inequality we infer that $\overline u$ is a minimizer of $\mathcal{A}(\widehat p, u)$, hence an eigenfunction of $\nu_1(\widehat p)$, see \eqref{AP2}. Then, $(\widehat p,\overline u)$ is an optimal pair as defined in \eqref{norm} and we may take $\widehat u=\overline u$ in the statement. Furthermore, by \eqref{ineqbaru} with $\widehat u=\overline u$ we get
$$\mathcal{A}(p,\widehat u)\leq   \mathcal{A}(\widehat p,\widehat u) \qquad \forall p\in P_{\alpha, \beta} \,,$$
which, recalling the definition of $\mathcal{A}( p, u) $, yields \eqref{crucial}.

\end{proof}
 	
{\bf Proof of Theorem \ref{thm-nu} completed.}
\par \smallskip\par

\textbf{ Step 1.} 
Let $(\widehat{p},\widehat u)\in P_{\alpha,\beta}\times H^2_\mathcal{O} $ be the optimal pair given by Lemma \ref{sella} and set $p_{\widehat  u}(x,y):=\beta \chi_{S_{\widehat u}}(x,y) + \alpha \chi_{\Omega \setminus S_{\widehat u} }(x,y)$ with $S_{\widehat u}=\widetilde{S}({\widehat u})$ as defined in Lemma \ref{rearrangement}; we prove that
\begin{equation}\label{equality}
\|\sqrt{\widehat{p}}\,\,\widehat  u\|_2^2=\|\sqrt{p_{\widehat  u}}\,\,\widehat  u\|_2^2.
\end{equation}
\par \smallskip\par
By Lemma \ref {rearrangement} on problem $I_{\alpha,\beta}$ we know that
$$\int_{\Omega} \widehat p(x,y)\, \widehat u^2\,dx\,dy\geq \int_{\Omega} p_{\widehat  u}(x,y)\, \widehat u^2\,dx\,dy\,.$$
On the other hand, by \eqref{crucial} with $p=p_{\widehat  u}$ we infer
$$\int_{\Omega} \widehat{p}(x,y)\, \widehat u^2\, dx dy\leq \int_{\Omega} p_{\widehat  u}(x,y)\, \widehat u^2\, dx dy \,.$$
Comparing the above inequalities, the proof of Step 1 follows.
\par
\bigskip\par
\textbf{ Step 2.}  Let $(p_{\widehat  u},\widehat u) \in P_{\alpha,\beta}\times H^2_\mathcal{O}$ be as in Step 1 and let $\widehat t\geq 0$ be the number corresponding to $\widehat u$ in $S_{\widehat u}$. We prove that $\widehat p=p_{\widehat u}$ a.e. in $\Omega$.
\par \smallskip\par
 By \eqref{equality}, if $\widehat t=0$ we have 
$$
0=\int_{\Omega}(\widehat p-p_{\widehat u}) \widehat u^2\,dx\,dy=\int_{\{\widehat u^2=0\}}(\widehat p-p_{\widehat u}) \widehat u^2\,dx\,dy+\int_{\{\widehat u^2>0\}}(\widehat p-\alpha) \widehat u^2\,dx\,dy,
$$
implying  $\widehat p\, \widehat u=\alpha \, \widehat u$ a.e. in $\Omega$.
On the other hand, since $\widehat u \in H^4(\Omega)$ we can write almost everywhere the Euler-Lagrange equation related to the Rayleigh quotient of $\nu_1^{\alpha,\beta}=\nu_1(\widehat p)$ and, for what observed above, we infer that
$$
\Delta^2 \widehat u= \nu_1^{\alpha,\beta}\,\alpha \, \widehat u \quad \text{a.e. in } \Omega \, . $$
Recalling that $\widehat u$ satisfies the partially hinged boundary conditions, this means that it must be one of the eigenfunctions listed in Proposition \ref{eigenvalue}; since the set of zeroes of any of the eigenfunctions of Proposition \ref{eigenvalue} has zero measure, this contradicts the definition
of $S_{\widehat u}$ and forces $S_{\widehat u}$ to be a set of positive measure. Whence, the above arguments proves that $\widehat t>0$.\par For $\widehat t>0$ we set
$$
A_{\widehat t}=\{ (x,y)\in \Omega\,: \widehat u^2(x,y) = \widehat t\,\} \,.
$$
By \eqref{equality} we obtain
$$
0=\int_{\Omega}(\widehat p-p_{\widehat u}) \widehat u^2\,dx\,dy=\int_{\{\widehat u^2<\widehat t\}}(\widehat p-\beta) \widehat u^2\,dx\,dy+\int_{\{\widehat u^2>\widehat t\}}(\widehat p-\alpha) \widehat u^2\,dx\,dy+\int_{A_{\widehat t}}(\widehat p-p_{\widehat u}) \widehat u^2\,dx\,dy.
$$
Assume by contradiction that $\widehat p>\alpha$ in a set $A\subseteq \{\widehat u^2>\widehat t\}$ with $|A|>0$, then we get
$$
\int_{\{\widehat u^2>\widehat t\}}(\widehat p-\alpha)( \widehat u^2-\widehat t)\,dx\,dy\geq \int_{A}(\widehat p-\alpha) ( \widehat u^2-\widehat t)\,dx\,dy>0\,
$$
and, in turn, that $\int_{\{\widehat u^2>\widehat t\}}(\widehat p-\alpha) \widehat u^2\,dx\,dy>\widehat t\,\int_{\{\widehat u^2>\widehat t\}}(\widehat p-\alpha)\,dx\,dy$. Whence,
$$
0>\int_{\{\widehat u^2<\widehat t\}}(\widehat p-\beta) \widehat u^2\,dx\,dy+\widehat t\int_{\{\widehat u^2>\widehat t\}}(\widehat p-\alpha)\,dx\,dy +\widehat t\int_{A_{\widehat t}}(\widehat p-p_{\widehat u})\,dx\,dy \geq \widehat t\int_{\Omega}(\widehat p-p_{\widehat u})\,dx\,dy=0\,,
$$
where the last equality follows from the preservation of the total mass condition. This contradicts the definition of the set $A$ and implies $\widehat p=\alpha$ a.e. in $\{\widehat u^2>\widehat t\}$. Proceeding as before, we suppose that $\widehat p<\beta$ in a subset of positive measure of $\{\widehat u^2<\widehat t\}$ and we obtain a further contradiction
$$
0>\widehat t\int_{\{\widehat u^2<\widehat t\}}(\widehat p-\beta) \,dx\,dy+\widehat t\int_{A_{\widehat t}}(\widehat p-p_{\widehat u})\,dx\,dy=\widehat t\int_{\Omega}(\widehat p-p_{\widehat u})\,dx\,dy=0.
$$
It remains to study $\widehat p$ in $A_{\widehat t}$. When $|A_{\widehat t}|>0$, we write the Euler-Lagrange equation related to $\nu_1^{\alpha,\beta}=\nu_1(\widehat p)$ obtaining
$$
\Delta^2 \widehat u= \nu_1^{\alpha,\beta}\,\widehat p \, \widehat u \quad \text{a.e. in } A_{\widehat t}. \ $$
Since $\widehat u^2=\widehat t$ we get $0= \nu_1^{\alpha, \beta} \, \widehat p$, that is absurd since $\widehat p\geq \alpha>0$. This implies that $A_{\widehat t}$ must have zero measure, so that $\widehat p=p_{\widehat u}$ a.e. in $\Omega$. 
\bigskip\par
\textbf{ Step 3.} 
Since $|A_{\widehat{t}}|=0$, also $|A_{\widehat t}\setminus S_{\widehat u}|=0$,
therefore it is not restrictive, up to a set of zero measure, to assume that
$A_{\widehat t}\setminus S_{\widehat u}=\emptyset$ in such way that $A_{\widehat t}\subseteq S_{\widehat u}$ and, in turn, that
\begin{equation*} \label{eq:Su2}
S_{\widehat u}=\{(x,y)\in \Omega: \widehat u^2(x,y)\leq \widehat t\} \, .
\end{equation*}
\par

\subsection{Proof of Theorem \ref{thm-muj}}
The existence issue follows as in the proof of Theorem \ref{existence} by considering the continuous function $F(\mu_{j}(p),\nu_1(p))=\mu_j(p)$.\par 
Fixed $j\geq 2$, by the characterization \eqref{caract1} of $\mu_j(p)$, we may choose $\overline W_j=\{w_1,\dots,w_j\}\subset H^2_\mathcal{E}$, where the functions $w_m$ for $m=1,\dots,j$ are defined in \eqref{sin2}. Therefore, we obtain
\begin{equation*}\label{stima4}
\mu_j(p)\leq \sup_{\substack{u\in \overline W_j\setminus \{0\}}} \frac{\|u\|^2_{H^2_*}}{\|\sqrt{p}u\|^2_2}= \max_{\substack{m=1,\dots,j}}\bigg\{\frac{\|w_{m}\|^2_{H^2_*}}{\|\sqrt{p} \,w_{m}\|^2_2}\bigg\}\,,
\end{equation*}
where we have exploited the fact that the space $\overline W_j$ is generated by disjointly supported functions. Since $\|w_m\|^2_{H^2_*}=|\Omega|j^3$, we finally obtain the upper bound \eqref{stimap}.\par 
In order to reduce this bound and, in turn $\mu_j^{\alpha,\beta}$, it is convenient to choose a weight $p$ having the same effect on every $\|\sqrt{p}\,w_m\|_2^2$; this suggests to take a weight $\pi/j$-periodic in $x$, i.e. $p\in P_{\alpha,\beta}^{per}$.
In this way we obtain
$$
\|\sqrt{p} w_m\|^2_2=\dfrac{1}{j}\|\sqrt{p} \sin^2(jx)\|^2_2,$$
and the proof of \eqref{stimap1} readily follows from \eqref{stimap}.\par
The last part of the statement comes out by slightly modifying the proof of Lemma \ref{rearrangement} by which we infer that the problem
	$$\inf_{p\in P_{\alpha,\beta}^{per} } \int_{\Omega}p(x,y)\,\sin^4(jx)\,dx \,dy$$
	admits the solution 
	\begin{equation*}\label{pm}
	p_j(x,y) = \alpha \chi_{ S_j} (x,y)+ \beta \chi_{\Omega \setminus S_j}(x,y)\,\quad \text{for a.e. } (x,y)\in \Omega\,,
	\end{equation*}
	where $ S_j = \{ (x,y)\in \Omega\,:\, \sin^4(jx)  \leq t_j \}$ for $ t_j> 0$  such that $| S_j|=\frac{\beta-1}{\beta-\alpha}\,|\Omega|$.
\section{Appendix}
 The aim of this section is to give further information on the eigenvalues  $\lambda_h(p)$ $(h\in \N_+)$ of \eqref{weight}; in particular, we compare problem \eqref{weight} with the more studied Dirichlet and Neumann type problems and we derive a Weyl-type asymptotic law for $\lambda_h(p)$ . \par  We begin by writing the above mentioned problems in our rectangular domain $\Omega$; the Dirichlet problem reads
\begin{equation}\label{dir}
\begin{cases}
\Delta^2 u=\lambda\, p(x,y) u & \qquad \text{in } \Omega \\
u(0,y)=u_{x}(0,y)=u(\pi,y)=u_{x}(\pi,y)=0 & \qquad \text{for } y\in (-\ell,\ell)\\
u(x,\pm\ell)=u_{y}(x,\pm\ell)=0
& \qquad \text{for } x\in (0,\pi)\, ,
\end{cases}
\end{equation} 
 with weak form
 \begin{equation*}\label{diri}
	\int_\Omega \Delta u\Delta v=\lambda \int_\Omega puv\qquad\forall v\in H^2_0.
	\end{equation*} 
The Neumann type problem reads
\begin{equation}\label{neu}
\begin{cases}
\Delta^2 u=\lambda\, p(x,y) u & \qquad \text{in } \Omega \\
u_{xx}(0,y)+\sigma
u_{yy}(0,y)=u_{xxx}(0,y)+(2-\sigma)u_{yyx}(0,y)=0 & \qquad \text{for } y\in (-\ell,\ell)\\
u_{xx}(\pi,y)+\sigma
u_{yy}(\pi,y)=u_{xxx}(\pi,y)+(2-\sigma)u_{yyx}(\pi,y)=0 & \qquad \text{for } y\in (-\ell,\ell)\\
u_{yy}(x,\pm\ell)+\sigma
u_{xx}(x,\pm\ell)=u_{yyy}(x,\pm\ell)+(2-\sigma)u_{xxy}(x,\pm\ell)=0
& \qquad \text{for } x\in (0,\pi)
\\
 u_{xy}(0,\pm\ell)=u_{xy}(\pi,\pm\ell)=0&
\end{cases}
\end{equation}
with weak form
$$
(u,v)_{H^2_*} =\lambda \int_\Omega p(x,y)\,uv\, dx \, dy\,\qquad\forall v\in H^2\,,
$$
where the scalar product $(\cdot,\cdot)_{H^2_*}$ is defined in Section \ref{problem}. It's worth mentioning that the corner conditions in \eqref{neu} make consistent the two formulations of the problem (classical and weak), while they are unnecessary when dealing with problem \eqref{weight}. Indeed, $u\in H^2_*\cap C^2(\overline \Omega)$ clearly satisfies $u(0,y)=u_y(0,y)=u_{yx}(0,y)=u_{xy}(0,y)=u_{yy}(0,y)=0$ for all $y\in [-\ell, \ell]$ and similarly it happens for $x=\pi$. We refer to \cite[Sections 4]{fergaz} for the derivation of the boundary conditions in \eqref{weight} and to \cite{chas} for those in \eqref{neu}, see also \cite{provenzano} for the formulation of \eqref{neu} in a more general setting.\par
\par \medskip\par
By exploiting the inclusions $H^2_0\subset H^2_*\subset H^2$, we derive
\begin{proposition}
	Let $p\in P_{\alpha,\beta}$ and let $\lambda_h(p)$ be the eigenvalues of \eqref{weight}. Furthermore, denote with $\Lambda_h^{Dir}(p)$ and $\Lambda_h^{Neu}(p)$ the divergent sequences of eigenvalues of \eqref{dir} and \eqref{neu}. There holds:\par
	\begin{equation}\label{comparison1}
	\Lambda_h^{Neu}(p) \leq\lambda_h(p)\leq \Lambda_h^{Dir}(p) \qquad \text{for all } h\in \N_+
	\end{equation} 
	and
	\begin{equation*}\label{asintotic}
	\lambda_h(p)\sim \dfrac{ h^2\, 16\pi^2}{\big(\int_{\Omega}\sqrt{p}\,dxdy\big)^{2}}\qquad as\quad h\rightarrow+\infty.
	\end{equation*}
\end{proposition}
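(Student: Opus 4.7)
\medskip
\noindent The plan is to establish the sandwich \eqref{comparison1} first by a min--max comparison on the three energy spaces, and then to deduce the Weyl-type asymptotic from the classical biharmonic Weyl law via a Dirichlet--Neumann bracketing.

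For \eqref{comparison1} I rely on the variational characterisation \eqref{caract1}, which applies, with the natural choice of energy space and Rayleigh quotient, to each of \eqref{dir}, \eqref{weight} and \eqref{neu}. Since $H^2_*\subset H^2$ and the bilinear form $(\cdot,\cdot)_{H^2_*}$ is precisely the one in the weak formulation of \eqref{neu}, enlarging the family of admissible $h$-dimensional test subspaces from $H^2_*$ to $H^2$ can only decrease the min--max, whence $\Lambda_h^{Neu}(p)\leq\lambda_h(p)$. For the right inequality I first establish the identity $\|u\|_{H^2_*}^2=\|\Delta u\|_2^2$ for every $u\in H^2_0$; by density it is enough to integrate by parts twice on test functions $u\in C^\infty_c(\Omega)$ to get $\int_\Omega u_{xy}^2\,dxdy=\int_\Omega u_{xx}u_{yy}\,dxdy$, which annihilates the $(1-\sigma)$-block in the $H^2_*$ inner product. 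Since $H^2_0\subset H^2_*$ and on $H^2_0$ the two Rayleigh quotients coincide, the min--max defining $\Lambda_h^{Dir}(p)$ is taken over a strictly narrower class of subspaces than the one defining $\lambda_h(p)$, which yields $\lambda_h(p)\leq\Lambda_h^{Dir}(p)$.

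For the Weyl asymptotic, thanks to \eqref{comparison1} it suffices to show that the same asymptotic $16\pi^2 h^2/\bigl(\int_\Omega\sqrt p\,dxdy\bigr)^2$ is satisfied by both $\Lambda_h^{Dir}(p)$ and $\Lambda_h^{Neu}(p)$. The case $p\equiv1$ is the classical Weyl law for $\Delta^2$ on a planar domain under Dirichlet or Neumann boundary conditions, giving the counting function $N(\lambda)\sim|\Omega|\sqrt\lambda/(4\pi)$. To promote this to a general $p\in P_{\alpha,\beta}\subset L^\infty(\Omega)$, I would run a Dirichlet--Neumann bracketing: partition $\Omega$ into small axis-parallel rectangles $Q_i$ on which $p$ is almost constant with value $p_i$, impose Dirichlet (resp.\ Neumann) conditions on the grid $\cup_i\partial Q_i$ to decouple the spectral problem, and use the scaling $\Lambda_h(Q_i,p_i)=\Lambda_h(Q_i,1)/p_i$ together with the $p\equiv1$ Weyl law on each $Q_i$. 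Summing the contributions gives a Riemann sum $\sum_i |Q_i|\sqrt{p_i\lambda}/(4\pi)\to \sqrt\lambda/(4\pi)\int_\Omega\sqrt p\,dxdy$, and inverting the relation $N(\lambda)\sim\sqrt\lambda/(4\pi)\int_\Omega\sqrt p\,dxdy$ yields the announced $h^{-2}$ asymptotic.

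The variational comparison is routine; the delicate point is the weighted Weyl law, where one has to control both the error coming from the step-function approximation of $p$ and the spectral error produced by the bracketing, since $p$ is only $L^\infty$ (rather than, say, continuous). This can be handled by a density argument on $p$, exploiting the stability of eigenvalues under $L^1$-perturbations of the weight as established by Lemmas \ref{compactness}--\ref{continuity}, or, more abstractly, by invoking a general pseudodifferential Weyl law for the self-adjoint operator $p^{-1/2}\Delta^2 p^{-1/2}$ with the boundary conditions of \eqref{weight}.
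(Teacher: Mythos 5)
For the comparison \eqref{comparison1} you use exactly the paper's argument: establish $\|u\|_{H^2_*}^2=\|\Delta u\|_2^2$ for $u\in H^2_0$ (so all three problems share the same Rayleigh quotient $\|u\|_{H^2_*}^2/\|\sqrt{p}\,u\|_2^2$ on their respective form domains) and then read off the inequalities from the nesting $H^2_0\subset H^2_*\subset H^2$ in the min--max formula \eqref{caract1}. This part is correct and essentially identical to the paper.

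For the Weyl asymptotic the paper takes a shortcut that you do not: it sandwiches $\lambda_h(p)$ between $\Lambda_h^{Neu}(p)$ and $\Lambda_h^{Dir}(p)$ and then simply cites Fleckinger--Lapidus (Theorems 4.1 and 4.2 of \cite{fleck}), where the weighted Weyl law $\Lambda_h\sim 16\pi^2 h^2/\bigl(\int_\Omega\sqrt p\,dxdy\bigr)^2$ is proved for both the Dirichlet and Neumann biharmonic problems with an $L^\infty$ weight. You instead sketch a self-contained Dirichlet--Neumann bracketing proof of that cited result: partition $\Omega$, freeze $p$ on each cell, use the homogeneity $\Lambda_h(Q_i,p_i)=\Lambda_h(Q_i,1)/p_i$ and the constant-coefficient Weyl law $N(\lambda)\sim|\Omega|\sqrt\lambda/(4\pi)$, and sum up a Riemann sum for $\int_\Omega\sqrt p$. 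This is the standard mechanism behind the Fleckinger--Lapidus theorem, so the route is mathematically sound, and the constants you compute are correct. The one place where you are vague is the "density argument on $p$" via Lemmas \ref{compactness}--\ref{continuity}: those give continuity of each fixed $\lambda_h$ under weak* perturbation of $p$, which is not uniform in $h$ and so cannot by itself pass the Weyl asymptotic from step weights to general $L^\infty$ weights. The clean way is to bracket $p$ between two step functions $\underline p\le p\le\overline p$ (exploiting the monotonicity of eigenvalues in $p$) rather than to approximate; this gives bounds uniform in $h$ and avoids the continuity lemmas altogether. In short: same first half; a longer but correct reconstruction of the cited Weyl law in the second half, with one step (the passage from step weights to $L^\infty$) that should be done by two-sided bracketing rather than by the eigenvalue-continuity lemmas.
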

We refer to \cite{buoso} for similar comparisons and a sharper asymptotic analysis in the Neumann case for the homogeneous plate.

\begin{proof}
	The proof is based on the variational characterization of the eigenvalues \eqref{caract1} and on some general results presented in \cite{fleck}. \par
	To prove \eqref{comparison1} we observe that, by density arguments, it follows that $\|\Delta u\|^2_{2}=\|u\|^2_{H^2_*}$ for all $u\in H^2_0(\Omega)$; in this way we may write both the variational representation of Dirichlet and Neumann eigenvalues in \eqref{dir} and \eqref{neu} as
	\begin{equation*}\label{caractnavneu}
	\Lambda_h^{Dir}(p)=\inf_{\substack{W_h\subset H^2_0\\\dim W_h=h}}\hspace{2mm}\sup_{\substack{u\in W_h\setminus \{0\}}}\frac{\|u\|^2_{H^2_*}}{\|\sqrt{p}u\|^2_2}\quad \text{and}\quad\Lambda_h^{Neu}(p)=\inf_{\substack{W_h\subset H^2\\\dim W_h=h}}\hspace{2mm}\sup_{\substack{u\in W_h\setminus \{0\}}}\frac{\|u\|^2_{H^2_*}}{\|\sqrt{p}u\|^2_2}.
	\end{equation*} 
	Observing that $H^2_0\subset H^2_*\subset H^2$, we infer
	\begin{equation*}
	\inf_{\substack{W_h\subset H^2\\\dim W_h=h}}\hspace{2mm}\sup_{\substack{u\in W_h\setminus \{0\}}}\frac{\|u\|^2_{H^2_*}}{\|\sqrt{p}u\|^2_2}\leq\inf_{\substack{W_h\subset H^2_*\\\dim W_h=h}}\hspace{2mm}\sup_{\substack{u\in W_h\setminus \{0\}}}\frac{\|u\|^2_{H^2_*}}{\|\sqrt{p}u\|^2_2}\leq \inf_{\substack{W_h\subset H^2_0\\\dim W_h=h}}\hspace{2mm}\sup_{\substack{u\in W_h\setminus \{0\}}}\frac{\|u\|^2_{H^2_*}}{\|\sqrt{p}u\|^2_2},
	\end{equation*}
	implying inequality \eqref{comparison1}.\par
	Finally, the asymptotic law for $\lambda_h(p)$ follows from \cite[Theorem 4.1, 4.2]{fleck}, where the authors prove 
	$$\Lambda_h^{Dir}(p)\sim16\pi^2\dfrac{ h^2}{\big(\int_{\Omega}\sqrt{p}\,dxdy\big)^{2}}\quad \text{ and }\quad\Lambda_h^{Neu}(p)\sim16\pi^2\dfrac{ h^2}{\big(\int_{\Omega}\sqrt{p}\,dxdy\big)^{2}}\qquad as\quad h\rightarrow+\infty,$$
	implying the same asymptotic behavior for $\lambda_h(p)$.
\end{proof}

The estimate \eqref{comparison1} confirms the general principle that, fixed $h\in\mathbb{N}_+$, any additional constraint increases the eigenvalue and, therefore, the vibration frequency. We point out that the Dirichlet problem represents the most constrained situation, while the Neumann the most free. Problem \eqref{weight} has intermediate boundary conditions, reflecting the trend given by \eqref{comparison1}.

\par\bigskip\noindent
\textbf{Acknowledgments.} The authors are members of the Gruppo Nazionale per l'Analisi Matematica, la Probabilit\`a e le loro Applicazioni (GNAMPA) of the Istituto Nazionale di Alta Matematica (INdAM) and are partially supported by the INDAM-GNAMPA 2019 grant: ``Analisi spettrale per operatori ellittici con condizioni di Steklov o parzialmente incernierate'' and by the PRIN project 201758MTR2: ``Direct and inverse problems for partial differential equations: theoretical aspects and applications'' (Italy).


\begin{thebibliography}{10}
	\bibitem{ammann} O.H. Ammann, T. von K\'{a}rm\'{a}n, G.B. Woodruff, The failure of the Tacoma Narrows Bridge, Federal Works Agency (1941).
	
	\bibitem{anedda1} C. Anedda, F. Cuccu, {\em Steiner symmetry in the minimization of the first eigenvalue in problems involving the p-Laplacian}, Proc. Amer. Math. Soc. 144, (2016), 3431-3440. 
	
	\bibitem{anedda2} C. Anedda, F. Cuccu, G. Porru, {\em Minimization of the first eigenvalue in problems involving the bi-laplacian}, Rev. Mate. Teor. Appl. 16, (2009), 127-136. 
	
	\bibitem{benguria} M.S. Ashbaugh, R.D. Benguria, {\em A sharp bound for the ratio of the first two eigenvalues of Dirichlet Laplacians and extensions},
	Ann. of Math. 135, (1992), 601-–628.
	
	\bibitem{auchmuty}G. Auchmuty, {\em Dual variational principles for eigenvalue problems}, in Nonlinear Functional 	Analysis and Its Applications, F. Browder, ed., American Mathematical Society, Providence, RI,	(1986), 55-71.
	
	
	
	
	\bibitem{bebuga2} E. Berchio, D. Buoso, F. Gazzola,  {\em On the variation of longitudinal and torsional frequencies in a partially hinged rectangular plate}, ESAIM Control Optim. Calc. Var. 24, (2018), 63-87.
	
	\bibitem{bebugazu}  E. Berchio, D. Buoso, F. Gazzola, D. Zucco, {\em A minimaxmax problem for improving the torsional stability of rectangular plates}, J. Optim. Theory Appl. 177, (2018), 64-92. 
	
	\bibitem{befafega} E. Berchio, A. Falocchi, A. Ferrero, D. Ganguly, {\em On the first frequency of reinforced partially hinged plates}, Commun. Contemp. Math., (2019), 1950074, 37 pp.
	
	\bibitem{bfg}	E.\ Berchio, A.\ Ferrero, F.\ Gazzola, {\it Structural instability of nonlinear plates modelling suspension bridges: mathematical answers to some long-standing questions}, Nonlin. Anal. Real World Appl. 28, (2016), 91-125.
	
	
	\bibitem{bgz}
	E.\ Berchio, F.\ Gazzola, C.\ Zanini, {\it Which residual mode captures the energy of the dominating mode in second order Hamiltonian systems?}, SIAM J. Appl. Dyn. Syst. 15, (2016), 338-355.
	
	
	
	\bibitem{bogamo} D. Bonheure, F. Gazzola, E. Moreira dos Santos, {\em Periodic solutions and torsional instability in a nonlinear nonlocal plate equation}, to appear in SIAM J. Math. Anal. 
	
	\bibitem{buoso} D. Buoso, L. Provenzano, J. Stubbe,  {\em  Semiclassical bounds for spectra of biharmonic operators}, arXiv:1904.11877.
	
	
	\bibitem{chanillo} S. Chanillo, D. Grieser, M. Imai, K. Kurata, I. Ohnishi, {\em Symmetry breaking and other phenomena in the optimization of eigenvalues for composite membranes}, Comm. Math. Phys. 214, (2000), 315-337.
	
	 \bibitem{chas} L.M. Chasman, {\em An isoperimetric inequality for fundamental tones of free plates},  Commun. Math. Phys. 303(2), (2011), 421-449.  
	
	\bibitem{chen} W. Chen, C-S. Chou, C-Y. Kao, \textit{Minimizing Eigenvalues for Inhomogeneous Rods and Plates}, J. Sci. Comput. 69, (2016), 983–1013.
	
	
	\bibitem{CV} F.~Colasuonno, E.~Vecchi, {\em  Symmetry in the composite plate problem}, Commun. Contemp. Math., Vol. 21, No. 02, (2018), 1850019, 34 pp.
	
	\bibitem{CV2} F.~Colasuonno, E.~Vecchi, {\em Symmetry and rigidity for the hinged composite plate problem}, J. Differential Equations 266, (2019), no. 8, 4901--4924.
	
	\bibitem{copro} B. Colbois, L. Provenzano, {\em Eigenvalues of elliptic operators with density,} Calculus of Variations and Partial Differential Equations, (2018), 57-36.
	
		\bibitem{como} M. Como, S. Del Ferraro, A. Grimaldi, A parametric analysis of the flutter instability for long span suspension bridges.  Wind and Structures 8, (2005), 1-12. 
	
		\bibitem{courant} R. Courant, D. Hilbert, Methods of Mathematical Physics, vol. 1. Interscience Publishers Inc, New York, (1953).
	
	\bibitem{cox} S.J. Cox, J.R. McLaughlin, {\em Extremal eigenvalue problems for composite membranes}, I, II, Appl. Math. Optim., 22, no. 2,  (1990), 153-167, 169-187. 
	
	\bibitem{cuccu22} F. Cuccu, G. Porru, {\em Maximization of the first eigenvalue in problems involving the bi-Laplacian}, Nonlinear Anal. 71,  (2009), 800-809.
	
	
	\bibitem{fergaz} A.\ Ferrero, F.\ Gazzola, {\it A partially hinged rectangular plate as a model for suspension bridges}, Disc.\ Cont.\ Dyn.\ Syst.\ A.\ 35, (2015), 5879-5908.
	
	\bibitem{fleck}J. Fleckinger, M.L. Lapidus, {\it Eigenvalues of elliptic boundary value problems with an indefinite weight function}. Trans. Am. Math. Soc. 295(1), (1986), 305–324. 
	
	\bibitem{bookgaz} F.\ Gazzola, Mathematical models for suspension bridges, MS\&A Vol.\,15, Springer, (2015).
	
	
	\bibitem{henrot} A. Henrot, Extremum problems for eigenvalues of elliptic operators, Frontiers in mathematics, Birk\"auser Verlag, Basel, Boston, Berlin,
	(2006).
	
	
	\bibitem{Irvine} H.M. Irvine, Cable structures, MIT Press Series in Structural Mechanics, Massachusetts, (1981).
	
	
	
	\bibitem{keller}J.B. Keller, {\em The minimum ratio of two eigenvalues}, SIAM J. Appl. Math., 31, (1976), no. 3,  485-491.
	
	
	
	
	\bibitem{lapr} P.D.\ Lamberti, L. Provenzano, {\it A maximum principle in spectral optimization problems for elliptic operators subject to mass density perturbations}, Eurasian Math. J. 4, (2013), no. 3, 70-83.
	
	\bibitem{larsen} A. Larsen, {\it Aerodynamics of the Tacoma Narrows Bridge - 60 years later}, Struct. Eng. Internat. 4, (2000), 243-248.
	
	
	
	
	\bibitem{navier} C.L.\ Navier, {\it Extraits des recherches sur la flexion des plans \'elastiques}, Bulletin des Sciences de la Soci\'et\'e
	Philomathique de Paris, (1823), 92-102.
	
	\bibitem{provenzano} L. Provenzano, {\it A note on the Neumann eigenvalues of the biharmonic operator,} Math. Methods Appl. Sci., 41(3), (2018), 1005-1012.
	
	
	
	\bibitem{rocard} Y. Rocard, Dynamic instability: automobiles, aircraft, suspension bridges. Crosby Loockwood, London, (1957).
	
	
	
\end{thebibliography}
\end{document}